\theoremstyle:=definition,remark,plain\do{%
     \expandafter\g@addto@macro\csname th@\theoremstyle\endcsname{%
        \addtolength\thm@preskip\parskip
     }%
   }
\DeclareMathAlphabet{\mathpzc}{OT1}{pzc}{m}{it} %allows \mathpzc command for Zapf Chancery alphabet
\DeclareFontFamily{U}{MnSymbolC}{}
\DeclareSymbolFont{mnsymbols}{U}{MnSymbolC}{m}{n}
\DeclareFontShape{U}{MnSymbolC}{m}{n}{
    <-6>  MnSymbolC5
   <6-7>  MnSymbolC6
   <7-8>  MnSymbolC7
   <8-9>  MnSymbolC8
   <9-10> MnSymbolC9
  <10-12> MnSymbolC10
  <12->   MnSymbolC12}{}
 \DeclareMathSymbol{\boxtimes}{2}{mnsymbols}{117}
\DeclareSymbolFont{symbolsC}{U}{txsyc}{m}{n}
\DeclareMathSymbol{\multimapinv}{\mathrel}{symbolsC}{18}
\DeclareMathSymbol{\multimapdot}{\mathrel}{symbolsC}{20}
\DeclareMathSymbol{\multimapdotinv}{\mathrel}{symbolsC}{21}
\DeclareSymbolFont{symbolsA}{U}{txsya}{m}{n}
\DeclareSymbolFont{symbolsC}{U}{txsyc}{m}{n}
\DeclareMathSymbol{\multimap}{\mathrel}{symbolsA}{40}
\DeclareMathSymbol{\multimapinv}{\mathrel}{symbolsC}{18}
\theoremstyle{change}
\newtheorem{lem}[subsubsection]{Lemma}
\newtheorem{prop}[subsubsection]{Proposition}
\newtheorem{thm}[subsubsection]{Theorem}
\newtheorem{cor}[subsubsection]{Corollary}
\newtheorem{ex}[subsubsection]{Example}
\newtheorem{exs}[subsubsection]{Examples}
\newtheorem{rem}[subsubsection]{Remark}
\newtheorem{rems}[subsubsection]{Remarks}
\titleformat{\subsection}[runin]{\normalfont\normalsize\bfseries}{\thesubsection}{0.4em}{}[.]
\titleformat{\subsubsection}[runin]{\normalfont\normalsize\bfseries}{\thesubsubsection}{0.4em}{}[.]
\titleformat{\paragraph}[runin] {\normalfont\normalsize\bfseries}{\theparagraph}{0.4em}{}[.]
\titleformat{\subparagraph}[runin] {\normalfont\normalsize\bfseries}{\thesubparagraph}{0.4em}{}[.]
\titlespacing*{\subsection} {0pt}{3.25ex plus 1ex minus .2ex}{0.4em} 
\titlespacing*{\subsubsection}{0pt}{3.25ex plus 1ex minus .2ex}{0.4em} 
\titlespacing*{\paragraph} {0pt}{3.25ex plus 1ex minus .2ex}{0.4em} 
\titlespacing*{\subparagraph} {\parindent}{3.25ex plus 1ex minus .2ex}{0.4em}
\newenvironment{examples}[1][\mbox{}]
{\begin{exs}#1\vspace{-.3em}\begin{enumerate}[label=(\arabic{*})]}{\end{enumerate}\end{exs}}
\newcommand{\krelto}{\mathrel{\mathmakebox[\widthof{$\xrightarrow{\rule{1.45ex}{0ex}}$}]
{\xrightharpoonup{\rule{1.45ex}{0ex}}\hspace*{-2.4ex}{\mapstochar}\hspace*{1.8ex}}}} %Kleisli relation arrow
\renewcommand{\mapsto}{\xmapsto{\rule{1.45ex}{0ex}}}
\newcommand{\relto}{\mathrel{\mathmakebox[\widthof{$\xrightarrow{\rule{1.45ex}{0ex}}$}]
{\xrightarrow{\rule{1.45ex}{0ex}}\hspace*{-2.4ex}{\mapstochar}\hspace*{1.8ex}}}} %relation arrow
\renewcommand{\to}{\xrightarrow{\rule{1.45ex}{0ex}}} %to arrow
\DeclareMathOperator{\conv}{\mathrm{conv}} %convergence operation used for Kleisli extension
\DeclareMathOperator{\nbhd}{\mathrm{nbhd}} %neighborhood operation used for Kleisli extension
\newcommand{\co}{\mathrm{co}} %exponent for co-category (i.e. same category but reversed order)
\newcommand{\comma}{\!\downarrow\!} %down-arrow for comma-categories
\newcommand{\dand}{\qquad\text{and}\qquad} %"display" \and command
\newcommand{\df}{\emph} %used when defining a term
\renewcommand{\epsilon}{\varepsilon}
\newcommand{\ev}{\mathrm{ev}} %evalutation
\newcommand{\inv}{{-1}} %exponent for inverse
\newcommand{\kleisli}{\circ} %kleisli composition
\newcommand{\mate}[1]{#1^\flat}
\newcommand{\op}{\mathrm{op}} %exponent for opposite category
\newcommand{\yonmult}{\mathbf{m}} %Yoneda multiplication
\newcommand{\yoneda}{\mathbf{y}} %Yoneda embedding functor
\newcommand{\Yoneda}{\hspace*{0.2ex}\mathbf{Y}} %Large Yoneda embedding functor
\newcommand{\Vee}{\bigvee\nolimits}
\newcommand{\Wedge}{\bigwedge}
\newcommand{\two}{\mathsf{2}} %quantale 2
\newcommand{\I}{\mathsf{I}} %quantale I
\newcommand{\V}{V} % a quantale
\newcommand{\W}{W} % another quantale
\newcommand{\Pp}{{\mathsf{P}_{\!+}}} %positive (extended) reals with "plus" as tensor
\newcommand{\PV}{{P_\V}} % the V-powerset functor
\newcommand{\Vhomr}{\multimapdot} %for hom of V
\newcommand{\Vhoml}{\multimapdotinv} %for hom of V
\newcommand{\cat}[1]{\mathsf{#1}}
\newcommand{\App}{\cat{App}} %category of approach spaces
\newcommand{\Cats}[1]{#1\text{-}\cat{Cat}} %category of small #1-categories
\newcommand{\LaxExts}[1]{#1\text{-}\cat{LaxExt}} %category of associative lax extensions of monads to #1-Rel
\newcommand{\Mods}[1]{#1\text{-}\cat{Mod}} %category of #1-modules between #1-categories
\newcommand{\Mons}[1]{#1\text{-}\cat{Mon}} %category of #1-monoids
\newcommand{\NRel}{\cat{NRel}} %category of numerical relations
\newcommand{\Ord}{\cat{Ord}} %category of ordered sets
\newcommand{\Rel}{\cat{Rel}} %category of relations
\newcommand{\Rels}[1]{#1\text{-}\cat{Rel}} %category of #1-relations
\newcommand{\Set}{\cat{Set}} %category of sets
\newcommand{\Mnds}[1]{\cat{Mnd}_{#1}} %category of #1-structured monads
\newcommand{\Sup}{\cat{Sup}} %category of sup-semilattices
\newcommand{\Top}{\cat{Top}} %category of topological spaces
\newcommand{\URels}[1]{#1\text{-}\cat{URel}} %category of #1-relations
\newcommand{\PTV}{{\Pi}} %discrete presheaf functor
\newcommand{\mon}[1]{\mathbb{#1}}
\newcommand{\mbeta}{\mon{\bbbeta}} %ultrafilter monad
\newcommand{\mF}{\mon{F}} %filter monad
\newcommand{\mId}{\mon{I}} %identity monad
\newcommand{\mP}{\mon{P}} %powerset monad
\newcommand{\mPTV}{{\mon{\Pi}}} %discrete presheaf monad
\newcommand{\mPV}{{\mon{P}_\V}} %V-powerset monad
\newcommand{\mPW}{{\mon{P}_\W}} %V-powerset monad
\newcommand{\mR}{\mon{R}} %generic monad
\newcommand{\mS}{\mon{S}} %generic monad
\newcommand{\mT}{\mon{T}} %generic monad
\newcommand{\mU}{\mon{U}} %up-set monad
\newcommand{\ext}[1]{{\hat{#1}}}
\newcommand{\eR}{\ext{R}} %generic functor
\newcommand{\eS}{\ext{S}} %generic functor
\newcommand{\eT}{\ext{T}} %generic functor
\newcommand{\eU}{\ext{U}} %ultrafilter functor
\newcommand{\emId}{{\ext{\mId}}} %extension of identity monad
\newcommand{\emS}{{\ext{\mS}}} %extension of generic monad
\newcommand{\emT}{{\ext{\mT}}} %extension of generic monad
\newcommand{\kext}[1]{{\check{#1}}}
\newcommand{\keS}{\kext{S}} %generic functor
\newcommand{\keT}{\kext{T}} %generic functor
\newcommand{\kePTV}{{\kext{\Pi}}\hspace*{0.2ex}} %descrete presheaf functor
\newcommand{\kemS}{{\kext{\mS}}} %Kleisli extension of generic monad
\newcommand{\kemT}{{\kext{\mT}}} %Kleisli extension of generic monad
\newcommand{\kemPTV}{{\kext{\mPTV}}} %Kleisli extension of Pi monad
\newcommand{\TV}{(\mT,\V)}
\newcommand{\kTV}{(\mT,\V)}
\newcommand{\SV}{(\mS,\V)}
\newcommand{\mfrak}[1]{\mathpzc{#1}}
\newcommand{\fx}{\mfrak{x}\hspace*{1pt}}
\newcommand{\fy}{\hspace*{0.5pt}\mfrak{y}}
\newcommand{\fY}{\mfrak{Y}}
\newcommand{\fz}{\mfrak{z}}
\newcommand{\fZ}{\mfrak{Z}}
\newcommand{\bembeta}{{\overline{\mbeta}}} %Barr extension of the ultrafilter monad
\begin{document}

\title{A cottage industry of lax extensions}
%\title{Lax extensions of monads to categories of relations}

\author{Dirk Hofmann\footnote{Partial financial assistance by Portuguese funds through CIDMA (Center for Research and Development in Mathematics and Applications), and the Portuguese Foundation for Science and Technology (``FCT -- Funda\c{c}\~ao para a Ci\^encia e a Tecnologia''), within the project PEst-OE/MAT/UI4106/2014, and by the project NASONI under the contract PTDC/EEI-CTP/2341/2012 is gratefully acknowledged.}\ \ and Gavin J. Seal}

\date{\today}

%\address{Institute of Geometry, Algebra and Topology\\Ecole Polytechnique F\'ed\'erale de Lausanne\\1015 Switzerland\\\texttt{gseal@fastmail.fm}}

%Mathematics Subject Classification: 18C20, 18D10, 18D35

\maketitle

\begin{abstract}
In this work, we describe an adjunction between the comma category of $\Set$-based monads under the $\V$-powerset monad and the category of associative lax extensions of $\Set$-based monads to the category of $\V$-relations. In the process, we give a general construction of the Kleisli extension of a monad to the category of $\V$-relations.
\end{abstract}

%\textbf{Keywords:} bimorphism, tensor, monad, monoidal category, action

%%%%%%%%%%%%%%%%%%%%%%%%%%%%%%%%%%%%%%%%%%%%%%%%%%%%%%%%%%%%%%%%%%%%%%%%%%%%%%%%%%%%%%%%%%%%%%%%%%%%%%%%
\setcounter{section}{-1}
\section{Introduction}

In \cite{Bar:70}, Barr introduced the concept of a lax extension of a monad by lifting the ultrafilter monad $\mbeta$ on $\Set$ to a lax monad $\bembeta$ on the category $\Rel$ of relations. The category of lax algebras for $\bembeta$ were shown to form the category $\Top$ of topological spaces and continuous maps. This concept was developed in several directions, two of which provide the basis for the current work: in~\cite{CleHof:03} and~\cite{CleTho:03}, Clementino, Hofmann and Tholen allowed for lax extensions to the category $\Rels{\V}$ of relations valued in a quantale $\V$, and in~\cite{Sea:09}, Seal proposed an alternate construction of a lax extension, that, when applied to the filter monad $\mF$, returned $\Top$ again as category of lax algebras. In particular, contrarily to the one-to-one correspondence between monads and categories of algebras, different monads --- equipped with appropriate lax extensions --- can yield isomorphic categories of lax algebras.

The adjunction presented here as Theorem~\ref{thm:TheAdjunction} sheds some light on this situation by having a family of monads and their lax extensions correspond to an essentially unique representative. A central ingredient of this adjunction is the \emph{discrete presheaf monad} $\mPV$ on $\Set$ (also called the \emph{$\V$-powerset monad}) that extends the powerset monad $\mP=\mP_\two$ from the base $\two$ to a general quantale $\V$. Indeed, the Kleisli category of this monad is the category $\Rels{\V}$ of $\V$-relations --- whose opposite category is used to laxly extend $\Set$-based monads via the basic adjunction
\[
(-)^\circ\dashv\Rels{\V}(-,1):\Rels{\V}^\op\to\Set
\]
(see Subsections \ref{ssec:MapsV-Rel}, \ref{ssec:PV}, \ref{LaxExt} and Example~\ref{ex:V-PowersetAsPresheaf}). The category of ``neighborhood monads'', that is, of monads that play the role of the filter monad in our general context, is the comma category $(\mP_\V\comma\Mnds{\Set})$ of monads on $\Set$ under $\mPV$. The category $\LaxExts{\V}$ of ``convergence monads'', that is, of monads that convey notions of convergence, similarly to the ultrafilter monad mentioned above, has as its objects monads $\mT$ on $\Set$ equipped with an associative lax extension $\emT$ to $\Rels{\V}$. The comma category $(\mP_\V\comma\Mnds{\Set})$ then appears as a full reflective subcategory of $\LaxExts{\V}$ via the adjunction
\[
F\dashv G:(\mP_\V\comma\Mnds{\Set})\to\LaxExts{\V}
\]
of Theorem~\ref{thm:TheAdjunction}. The reflector $F$ takes a pair $(\mT,\emT)$ to the monad induced by the adjunction
\[
(-)^\sharp\dashv\URels{(\mT,\V)}(-,1):\URels{(\mT,\V)}^\op\to\Set,
\]
where $\URels{(\mT,\V)}$ is the category of \emph{unitary $(\mT,\V)$-relations} associated to the associative lax extension $\emT$ of $\mT$ to $\Rels{\V}$ (Proposition~\ref{prop:URelSetAdjun}). The embedding functor $G$ sends a monad morphism $\mPV\to\mT$ to the pair $(\mT,\kemT)$, where $\kemT$ is the Kleisli extension of $\mT$ to $\Rels{\V}$ (Subsection~\ref{Kleisli_ext}); this lax extension generalizes the construction described in \cite{Sea:09} for the $\V=\two$ case.

Let us make two remarks on the context of our article. First, in \cite{LowVro:08}, Lowen and Vroegrijk show that the category $\App$ of approach spaces and non-expansive maps can be presented as lax algebras for a ``prime functional  ideal'' monad laxly extended to $\Rel=\Rels{\two}$, contrasting the result of \cite{CleHof:03}, where the ultrafilter monad with a lax extension to $\Rels{\V}$ was used to obtain a lax-algebraic description of $\App$ (with $\V=\Pp$ the extended non-negative real half-line). These results, generalized in \cite[Corollary~IV.3.2.3]{Cle/al:14}, stem from a different point of view as the one taken here, as they allow for a ``change-base-base'', namely passing from $\V=\Pp$ to $\V=\two$ by modifying the underlying monad, whereas the current work fixes the quantale $\V$. Second, the study of lax extensions is rooted in topology and order theory, but current trends show that it is in no way limited to these fields, as the cited references show, but also \cite{EnqSac:14} and \cite{MarVen:12}.

In this article, we use the conventions and terminology of \cite{Cle/al:14} to which the reader is referred in case of doubt. In particular, we use the term ``order'' for what is elsewhere known as ``preorder'' (that is, a reflexive and transitive relation, not necessarily symmetric). Because our categories may be variously enriched, we use the term ``ordered category'' and ``monotone functor'' rather than their ``2-'' counterparts to designate the categories and functors enriched in $\Ord$, the category of ordered sets and monotone maps.

%%%%%%%%%%%%%%%%%%%%%%%%%%%%%%%%%%%%%%%%%%%%%%%%%%%%%%%

\section{The $\V$-powerset monad}\label{sec:Homs}

\subsection{Quantales} A \df{quantale} $\V=(\V,\otimes,k)$ (more precisely, a \df{unital quantale}) is a complete lattice equipped with an associative binary operation $\otimes$, its \df{tensor}, that preserves suprema in each variable:
\[
a\otimes\Vee_{\!i\in I} b_i=\Vee_{\!i\in I}(a\otimes b_i)\dand\Vee_{\!i\in I} a_i\otimes b=\Vee_{\!i\in I}(a_i\otimes b)
\]
for all $a,a_i,b,b_i\in\V$ ($i\in I$), and has a neutral element $k$. A \df{lax homomorphism} of quantales $f:(\W,\otimes,l)\to(\V,\otimes,k)$ is a monotone map $f:\W\to\V$ satisfying
\[
f(a)\otimes f(b)\le f(a\otimes b)\dand k\le f(l)
\]
for all $a,b\in\V$. A map $f:\W\to\V$ is a \df{homomorphism} of quantales if it is a sup-map and preserves the tensor and neutral elements (that is, $f(a)\otimes f(b)=f(a\otimes b)$ and $k=f(l)$).

Given $a\in\V$, the sup-map $(-)\otimes a$ is left adjoint to an inf-map $((-)\Vhoml a):\V\to\V$ that is uniquely determined by
\[
(v\otimes a)\le b\iff v\le (b\Vhoml a)
\]
for all $v,b\in\V$; hence, $(b\Vhoml a)=\Vee\{v\in\V\mid v\otimes a\le b\}$. Symmetrically, the sup-map $a\otimes(-)$ is left adjoint to an inf-map $(a\Vhomr (-)):\V\to\V$ that is uniquely determined by
\[
(a\otimes v)\le b\iff v\le (a\Vhomr b).
\]
In the case where the tensor of $\V$ is commutative, $((-)\Vhoml a)$ and $(a\Vhomr(-))$ coincide, and either of the two notations may be used.

\begin{examples}\label{ex:quantales}
\item\label{ex:quantales:two} The two-element chain $\two=\{0,1\}$ with $\otimes$ the binary infimum, and
$k=\top$ is a quantale. Here, $(b\Vhoml a)$ is the Boolean truth value of the statement $a\le b$. For any other quantale $\V=(V,\otimes,k)$, there is a canonical homomorphism $\two\to\V$ that necessarily sends $0$ to the bottom element $\bot$ of $\V$, and $1$ to $k$.
\item The unit interval $\I=[0,1]$ with its natural order, $\otimes$ given by multiplication, and neutral element $k=1$ is a quantale. In this case,
\[
(b\Vhoml a)=b\oslash a:=\Vee\{v\in[0,1]\mid v\cdot a\le b\},
\]
so that $b\oslash a=b/a$ if $b<a$, and $b\oslash a=1$ otherwise. This quantale is isomorphic to the \df{extended non-negative reals} quantale $\Pp=([0,\infty]^\co,+,0)$ via the monotone bijection $f:[0,1]\to[0,\infty]^{\co}$ that sends $0$ to $\infty$, and $x\in(0,1]$ to $-\log(x)$ (see for example \cite{CleHofTho:04}; here $[0,\infty]^\co$ denotes the set of extended non-negative reals equipped with the order opposite to the natural order). The embedding  $\two\to\I$ is a homorphism of quantales (that is, a sup-map that preserves the tensor and neutral element).
\end{examples}

\subsection{$\V$-relations}
Given a quantale $\V=(\V,\otimes,k)$, a \df{$\V$-relation} $r:X\relto Y$ from a set $X$ to a set $Y$ is a $\Set$-map $r:X\times Y\to\V$. A $\V$-relation $r:X\relto Y$ can be composed with a $\V$-relation $s:Y\relto Z$ via
\[
(s\cdot r)(x,z) = \Vee_{\!y\in Y} r(x,y)\otimes s(y,z)
\]
(for all $x\in X$, $z\in Z$) to yield a $\V$-relation $s\cdot r:X\relto Z$. The identity on a set $X$ for this composition is the $\V$-relation $1_X:X\relto X$ that sends $(x,y)$ to $k$ if $x=y$ and to $\bot$ otherwise (where $\bot$ is the least element of $\V$). Sets and $\V$-relations between them form the category
\[
\Rels{\V}.
\]

\begin{examples}
\item The category $\Rels{\two}$ is isomorphic to the category $\Rel$ of sets with ordinary relations as morphisms.
\item The category $\Rels{\I}$ is isomorphic to the category $\NRel=\Rels{\Pp}$ of numerical relations. 
\end{examples}

\subsection{Ordered $\V$-relations}\label{ssec:OrdV-Rel}
The hom-set $\Rels{\V}(X,Y)$ inherits the pointwise order induced by $\V$: given $r:X\relto Y$ and $r':X\relto Y$, we have
\[
r\le r'\iff\forall (x,y)\in X\times Y\ (r(x,y)\le r'(x,y)).
\]
Since the order on $\V$ is complete, so is the pointwise order on $\Rels{\V}(X,Y)$, and since the tensor in $\V$ distributes over suprema, $\V$-relational composition preserves suprema in each variable:
\[
\big(\Vee_{\!i\in I}s_i\big)\cdot r=\Vee_{\!i\in I}(s_i\cdot r)\dand t\cdot\big(\Vee_{\!i\in I}s_i\big)=\Vee_{\!i\in I}(t\cdot s_i)
\]
for $\V$-relations $r:X\relto Y$, $s_i:Y\relto Z$ ($i\in I)$, and $t:Z\relto W$. In particular, given a $\V$-relation $r:X\relto Y$, the sup-map $(-)\cdot r:\Rels{\V}(Y,Z)\to\Rels{\V}(X,Z)$ is left adjoint to an inf-map $(-)\Vhoml r:\Rels{\V}(X,Z)\to\Rels{\V}(Y,Z)$ defined by
\[
(s\Vhoml r)(y,z)=\Wedge_{x\in X} (r(x,y) \Vhomr s(x,z)),
\]
for all $\V$-relations $s:X\relto Z$, and $y\in Y$, $z\in Z$. Diagrammatically, $s\Vhoml r$ is the extension in $\Rels{\V}$ of $s$ along $r$:
\[
\xymatrix{X\ar[r]^-{\rule[-1pt]{0pt}{0pt}r}\ar@{}[r]|-{\object@{|}}\ar[d]|-{\object@{|}}_{s\rule{2pt}{0pt}}="a"&Y\\
Z\ar@{<--}[ur]|-{\object@{|}}_{s\,\Vhoml\, r}="b"&\ar@{}^-{\ge}"a";"b"}
\]
Symmetrically, for a $\V$-relation $t:Z\relto W$, the sup-map $t\cdot(-):\Rels{\V}(Y,Z)\to\Rels{\V}(Y,W)$ is left adjoint to an inf-map $(t\Vhomr(-)):\Rels{\V}(Y,W)\to\Rels{\V}(Y,Z)$ given by
\[
(t\Vhomr s)(y,z)=\Wedge_{w\in W} (s(y,w)\Vhoml t(z,w)),
\]
for all $\V$-relations $s:Y\relto W$, and $y\in Y$, $z\in Z$. Diagrammatically, $t\Vhomr s$ is the lifting in $\Rels{\V}$ of $s$ along $t$:
\[
%\xymatrix{Y\ar[r]^-{\rule[-2pt]{0pt}{0pt}s}\ar@{}[r]|-{\object@{|}}\ar@{-->}[dr]|-{\object@{|}}_{t\Vhomr s}&W\\
%&Z\ar[u]|-{\object@{|}}_{\rule{1pt}{0pt}t}}
\xymatrix{Y\ar[d]|-{\object@{|}}_{s\rule{2pt}{0pt}}="a"\ar@{-->}[dr]|-{\object@{|}}^{t\,\Vhomr\, s}="b"&\\
W\ar@{}[r]|-{\object@{|}}\ar@{<-}[r]_-{\rule{0pt}{6pt}t}&Z\ar@{}_-{\ge}"a";"b"}
\]

\subsection{Opposite $\V$-relations}
For a $\V$-relation $r:X\relto Y$, the \df{opposite} $\V$-relation is $r^\circ:Y\relto X$ is defined by
\[
r^\circ(y,x)=r(x,y)
\]
for all $x\in X$, $y\in Y$. Involution preserves the order on the hom-sets:
\[
r\le s\implies r^\circ\le s^\circ
\]
(for all $\V$-relations $r,s:X\relto Y$).

\subsection{Maps as $\V$-relations}\label{ssec:MapsV-Rel}
There is a functor from $\Set$ to $\Rels{\V}$ that interprets the graph of a $\Set$-map $f: X \to Y$ as the 
$\V$-relation $f_\circ:X\relto Y$ given by
\[
f_\circ(x,y)=
\begin{cases}k&\text{if $f(x)=y$,}\\ \bot&\text{otherwise.}\end{cases}
\]
To keep notations simple, we write $f:X\to Y$ instead of $f_\circ:X\relto Y$ to designate a $\V$-relation induced by a map; in particular, we write $f^\circ$ instead of $(f_\circ)^\circ$. There is therefore a functor $(-)_\circ:\Set\to\Rels{\V}$, and a functor $(-)^\circ:\Set\to\Rels{\V}^\op$ obtained by sending a map $f$ to $f_\circ=(f_\circ)^\circ$.

Note that without any commutativity assumption on $\V$, composition of $\V$-relations is not necessarily compatible with the involution $(-)^\circ$, but whiskering by $\Set$-maps is:
\[
(h\cdot s\cdot f)^\circ=f^\circ\cdot s^\circ\cdot h^\circ
\]
for all $\V$-relations $s:Y\to Z$, and $\Set$-maps $f:X\to Y$, $h:Z\to W$.

\subsection{The $\V$-powerset monad}\label{ssec:PV}
Any $\V$-relation $r:X\relto Y$ yields a map $r^\mPV:\V^Y\to\V^X$ that sends $s:Y\to\V$ to $r^\mPV(s):X\to\V$ defined by
\[
r^\mPV(s)(x)=\bigvee_{y\in Y}r(x,y)\otimes s(y)
\]
for all $x\in X$. This correspondence describes a functor $\Rels{\V}^\op\to\Set$ that is right adjoint to $(-)^\circ:\Set\to\Rels{\V}^\op$, and determines the \df{$\V$-powerset monad} $\mPV=(P_\V,\mu,\eta)$ on $\Set$, where
\begin{alignat*}{3}
\PV X&=\V^X,& \PV f(r)(y)&=\Vee_{x\in f^\inv(y)}r(x),\\
\mu_X(R)(x)&=\Vee_{s\in\V^X}s(x)\otimes R(s),\qquad &  \eta_X(x)(x')&=1_X(x,x'),
\end{alignat*}
for all $x,x'\in X$, $y\in Y$, and maps $f:X\to Y$, $r:X\to\V$, $R:\V^X\to\V$. The $\two$-monad $\mP_\two$ is easily seen to be isomorphic to the powerset monad $\mP$.

The monad associated to the left adjoint functor $(-)_\circ:\Set\to\Rels{\V}$ is of course isomorphic to $\mPV$, but our focus in the rest of this article will be on the adjunction with $\Rels{\V}^\op$ (see~\ref{ssec:DiscretePresheafMonad}).

\subsection{$\V$-categories}\label{ssec:SmallV-Cats}
When $\V=(\V,\otimes,k)$ is a quantale, a small $\V$-category $(X,a)$ is a set $X$ with a transitive and reflexive $\V$-relation $a$, so that
\[
a\cdot a\le a\dand 1_X\le a,
\]
or equivalently,
\[
a(x,y)\otimes a(y,z)\le a(x,z)\dand  k\le a(x,x),
\]
for all $x,y,z\in X$. A $\V$-functor $f:(X,a)\to(Y,b)$ of $\V$-categories is a map $f:X\to Y$ with
\[
a\cdot f^\circ\le f^\circ\cdot b,
\]
or, in pointwise notation,
\[
a(x,y)\le b(f(x),f(y))
\]
for all $x,y,z\in X$. We denote by
\[
\Cats{\V}
\]
the category of small $\V$-categories and their $\V$-functors. The induced order on a $\V$-category $(X,a)$ is defined by
\[
x\le y\iff k\le a(x,y)
\]
(for all $x,y\in X$); with the hom-sets $\Cats{V}(X,Y)$ equipped with the induced pointwise order:
\[
f\le g\iff\forall x\in X\ (f(x)\le g(x))
\]
(for all $\V$-functors $f,g:X\to Y$), $\Cats{V}$ is an ordered category.

\subsection{$\V$-modules}
Let $\V$ be a quantale, and $(X,a)$, $(Y.b)$ small $\V$-categories. A $\V$-relation $r:X\relto Y$ is a $\V$-module if
\[
r\cdot a\le r\dand b\cdot r\le r.
\]
Since the reverse inequalities always hold, these are in fact equalities:
\[
r\cdot a= r\dand b\cdot r= r.
\]
%We write $r:(X,a)\modto(Y,b)$ if the $\V$-relation $r$ is a $\V$-module.
Modules compose as $\V$-relations, and $a:(X,a)\relto (X,a)$ serves as an identity morphism in the category
\[
\Mods{\V}
\]
whose objects are $\V$-categories and morphisms are $\V$-modules. This category is ordered, with the order inherited from $\Rels{\V}$; in fact, $\Mods{\V}$ is a quantaloid, with suprema in its hom-sets formed as in $\Rels{\V}$; similarly, adjoints to the composition maps (as described in~\ref{ssec:OrdV-Rel}). By sending a $\V$-module to its underlying $\V$-relation, one obtains a lax functor
\[
\Mods{\V}\to\Rels{\V}
\]
that preserves composition, but in general only preserves the identity laxly since $1_X\le a$ for any $\V$-category structure $a:X\relto X$.

\subsection{Maps in $\Mods{\V}$}\label{ssec:V-CatopV-Mod}
For a $\V$-functor $f:(X,a)\to(Y,b)$, one defines a $\V$-module $f^\ast:(Y,b)\relto(X,a)$
by
\[
f^\ast:=f^\circ\cdot b,
\]
that is, $f^\ast(y,x)=b(y,f(x))$ for all $x\in X$, $y\in Y$. This operation defines a monotone functor
\[
(-)^\ast:\Cats{\V}\to\Mods{\V}^\op.
\]
In particular, $1_{(X,a)}=a=1_X^\ast$.

\section{The $(\mT,\V)$-powerset monad}

\subsection{Associative lax extensions}\label{LaxExt}
A \df{lax extension} of a monad $\mT=(T,m,e)$ on $\Set$ to $\Rels{\V}$ is a lax functor $\eT:\Rels{\V}\to\Rels{\V}$ that extends $T$ laxly, and such that $m^\circ:\eT\to\eT\eT$ and $e^\circ:1_{\Rels{\V}}\to\eT$ are lax natural transformations. Equivalently, $\eT:\Rels{\V}\to\Rels{\V}$ is given by functions
\[
\eT_{X,Y}:\Rels{\V}(X,Y)\to\Rels{\V}(TX,TY)
\]
for all sets $X,Y$ (with $\eT_{X,Y}$ simply written as $\eT$), such that
\begin{multicols}{2}
\begin{enumerate}
\item\label{cond:LaxExt1} $r\le r'\implies \eT r\le\eT r'$,
\item\label{cond:LaxExt2} $(Tf)^\circ\le\eT(f^\circ)$,
\item\label{cond:LaxExt4} $\eT(f^\circ\cdot r)=(Tf)^\circ\cdot\eT r$,
\item\label{cond:LaxExt5} $\eT s\cdot\eT r\le\eT(s\cdot r)$,
\item\label{cond:LaxExt6} $\eT\eT r\cdot m_X^\circ\le m_Y^\circ\cdot\eT r$,
\item\label{cond:LaxExt3} $r\cdot e_X^\circ\le e_Y^\circ\cdot\eT r$,
\end{enumerate}
\end{multicols}
for all sets $X,Y,Z$, $\V$-relations $r,r':X\relto Y$, $s:Y\relto Z$, and maps $f:Z\to Y$. The lax extension is \df{associative}, if the inequalities in conditions~\eqref{cond:LaxExt5} and \eqref{cond:LaxExt6} are equalities, that is, if $\eT$ preserves composition of $\V$-relations and $m^\circ:\eT\eT\relto\eT$ is a natural transformation.

\begin{examples}
\item An associative lax extension $\emId$ of the identity monad $\mId$ on $\Set$ to $\Rels{\V}$ is given by the identity monad on $\Rels{\V}$.
\item An associative lax extension of the ultrafilter monad $\mU$ on $\Set$ to $\Rels{\V}$ is given by the lax functor $\eU:\Rels{\V}\to\Rels{\V}$ defined by
\[
\eU r(\fx,\fy)=\Wedge_{A\in\fx,B\in\fy}\Vee_{x\in A,y\in B}r(x,y)
\]
for all ultrafilters $\fx\in UX$ and $\fy\in UY$.
\end{examples}

\begin{rem}
The definition of a \df{lax extension} $\emT$ of $\mT$ to $\Rels{\V}$ given above is equivalent to the one given in \cite[Section~III.1]{Cle/al:14}. In particular, $\eT$ is a lax functor, and $e^\circ:1_{\Rels{\V}}\to\eT$ and $m^\circ:\eT\to\eT\eT$ are lax natural transformations. The lax extension is called \emph{associative} because the equalities in \eqref{cond:LaxExt5} and \eqref{cond:LaxExt6} are equivalent to the Kleisli convolution being associative on unitary relations (see Proposition~\ref{prop:associative} below).
\end{rem}

\subsection{$\TV$-relations}
For a quantale $\V$ and an associative lax extension $\emT$ to $\Rels{\V}$ of a monad $\mT$ on $\Set$, a \df{$\TV$-relation} $r:X\krelto Y$ is a $\V$-relation $r:TX\relto Y$. The \df{Kleisli convolution} $s\kleisli r:X\krelto Z$ of $\TV$-relations $r:X\krelto Y$ and $s:Y\krelto Z$ is the $\TV$-relation defined by
\[
s\kleisli r:=s\cdot \eT r\cdot m_X^\circ.
\]
A $\TV$-relation $r:X\relto Y$ is \df{unitary} if 
\[
r\kleisli 1_X^\sharp=r\dand 1_Y^\sharp\kleisli r=r,
\]
where $1_X^\sharp:=e_X^\circ\cdot\eT 1_X:TX\relto X$; these conditions are equivalent to
\[
r\cdot \eT 1_X=r\dand e_Y^\circ\cdot\eT r\cdot m_X^\circ=r.
\]
With conditions \ref{LaxExt}\eqref{cond:LaxExt2} and \eqref{cond:LaxExt3}, the $\TV$-relation $r:X\relto Y$ is unitary precisely if
\[
r\cdot \eT 1_X\le r\dand e_Y^\circ\cdot\eT r\cdot m_X^\circ\le r.
\]
One readily verifies that $1_X^\sharp$ is a unitary $\TV$-relation. Furthermore, Kleisli convolution is associative on unitary $\TV$-relations, as the following result recalls (see~\cite[Proposition~III.1.9.4]{Cle/al:14}).
\begin{prop}\label{prop:associative}
Let $\emT$ be a lax extension to $\Rels{\V}$ of a monad $\mT=(T,m,e)$ on $\Set$. The following are equivalent:
\begin{enumerate}[label=\normalfont(\roman{*})]
\item $\emT$ is associative;
\item $\eT:\Rels{\V}\to\Rels{\V}$ preserves composition and $m^\circ:\eT\to\eT\eT$ is natural.
\end{enumerate}
\end{prop}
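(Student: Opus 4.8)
The plan is to unwind the definition of associativity and check that each of its two defining equalities is exactly one of the two assertions in (ii); once this identification is made, the equivalence is immediate in both directions. Recall that, by definition, $\emT$ is associative precisely when the inequalities \eqref{cond:LaxExt5} and \eqref{cond:LaxExt6} are in fact equalities, so it suffices to read off what each of these two equalities says.

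The equality version of \eqref{cond:LaxExt5}, namely $\eT s\cdot\eT r=\eT(s\cdot r)$ for all composable $\V$-relations $r,s$, is by definition the statement that $\eT$ preserves composition. Here ``preserves composition'' must be read as preservation of the binary composition operation only: a lax extension satisfies $1_{TX}\le\eT 1_X$ but in general not equality, so one should not conflate this with $\eT$ being a strict functor, lest the equivalence fail in the direction (i) $\Rightarrow$ (ii).

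For \eqref{cond:LaxExt6} I would make explicit the transformation $m^\circ$ whose component at $X$ is $m_X^\circ:TX\relto TTX$, arising from $m_X:TTX\to TX$. Since $\eT$ and $\eT\eT$ act on objects by $\eT X=TX$ and $\eT\eT X=TTX$, the arrow $m_X^\circ$ is a morphism $\eT X\relto\eT\eT X$ of $\Rels{\V}$, so $m^\circ$ is a transformation $\eT\to\eT\eT$, and its naturality square at $r:X\relto Y$ reads $m_Y^\circ\cdot\eT r=\eT\eT r\cdot m_X^\circ$; this is precisely the equality version of \eqref{cond:LaxExt6}. The one point deserving genuine attention is the variance bookkeeping: because $m$ itself runs from $TT$ to $T$, one must verify that after passing to opposite relations the components of $m^\circ$ run from $\eT$ to $\eT\eT$ and that the square is oriented so as to reproduce \eqref{cond:LaxExt6} rather than its transpose. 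This also explains why the first half of (ii) is tacitly needed: it makes $\eT\eT$ a well-defined (lax) functor, so that the phrase ``$m^\circ$ is natural'' is meaningful.

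Combining the two identifications, (i) asserts that both \eqref{cond:LaxExt5} and \eqref{cond:LaxExt6} hold with equality, while (ii) asserts exactly that $\eT$ preserves composition and that $m^\circ$ is natural; these are the same pair of statements, which yields the equivalence. I anticipate no substantive obstacle beyond the orientation check for $m^\circ$ described above, the remainder being a direct rereading of the lax-extension axioms.
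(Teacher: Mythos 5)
Your reading of the statement as written is accurate, and for that literal statement your argument is complete: the paper \emph{defines} $\emT$ to be associative precisely when the inequalities in \ref{LaxExt}\eqref{cond:LaxExt5} and \eqref{cond:LaxExt6} hold as equalities; the first of these equalities is verbatim ``$\eT$ preserves (binary) composition'', and the second is the naturality square $\eT\eT r\cdot m_X^\circ=m_Y^\circ\cdot\eT r$ of $m^\circ:\eT\to\eT\eT$ at $r:X\relto Y$. Your orientation check for the components $m_X^\circ:TX\relto TTX$ is correct, and you are right to insist that ``preserves composition'' excludes identities, since in general only $1_{TX}\le\eT 1_X$ holds. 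So under the paper's own definitions, (i) and (ii) coincide and there is essentially nothing to prove.

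Be aware, however, that the paper supplies no proof of its own here: the proposition is ``recalled'' from the literature (Proposition~III.1.9.4 of the cited monograph), and both the sentence introducing it and the Remark in \ref{LaxExt} make clear that the content the authors actually invoke is the equivalence of these equalities with the \emph{associativity of the Kleisli convolution} $\kleisli$ on unitary $\TV$-relations. That equivalence is not a tautology: proving $(\psi\kleisli\varphi)\kleisli\rho=\psi\kleisli(\varphi\kleisli\rho)$ from (ii), and conversely extracting preservation of composition and naturality of $m^\circ$ from associativity of $\kleisli$ on unitary relations, requires genuine computation. Your unwinding is a legitimate verification of the displayed statement, but it carries none of that substantive content; if the proof were meant to replace the citation rather than merely certify the wording, the Kleisli-convolution equivalence would still need to be addressed.
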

%In fact, one has for a unitary $\TV$-relation $r:X\krelto Y$, and \emph{any} $\TV$-relations $s:Y\krelto Z$ and $t:Z\krelto W$:
%\begin{align*}
%t\kleisli(s\kleisli r)&=t\cdot\eT(s\cdot\eT(r\cdot\eT 1_X)\cdot m_X^\circ)\cdot m_X^\circ&&\text{($r$ unitary)}\\
%&=t\cdot\eT(s\cdot\eT r\cdot\eT\eT 1_X\cdot m_X^\circ)\cdot m_X^\circ&&\text{($\eT$ preserves composition)}\\
%&=t\cdot\eT(s\cdot\eT r\cdot m_X^\circ\cdot\eT1_X)\cdot m_X^\circ&&\text{($m^\circ$ natural)}\\
%&=t\cdot\eT s\cdot\eT\eT r\cdot \eT(m_X^\circ)\cdot\eT\eT1_X\cdot m_X^\circ&&\text{($\eT$ preserves composition)}\\
%&=t\cdot\eT s\cdot\eT\eT r\cdot(T m_X)^\circ\cdot\eT\eT1_X\cdot m_X^\circ&&\text{(condition \eqref{cond:LaxExt4})}\\
%&=t\cdot\eT s\cdot\eT\eT r\cdot m_{TX}^\circ\cdot m_X^\circ\cdot\eT 1_X&&\text{($m^\circ$ natural, $m\cdot Tm=m\cdot mT$)}\\
%&=t\cdot\eT s\cdot m_Y^\circ\cdot\eT r\cdot\eT\eT 1_X\cdot m_X^\circ&&\text{($m^\circ$ natural, twice)}\\
%&=t\cdot\eT s\cdot m_Y^\circ\cdot\eT(r\cdot\eT1_X)\cdot m_X^\circ=(t\kleisli s)\kleisli r &&\text{($r$ unitary).}
%\end{align*}
%As a consequence, if $r$ and $s$ are unitary $\TV$-relations, then so is $s\kleisli r$. 

Hence, one can form the category
\[
\URels{\TV}
\]
whose objects are sets and morphisms are unitary $\TV$-relations (with composition given by Kleisli convolution and identity on $X$ by $1_X^\sharp$).

\subsection{Ordered $\TV$-relations}
Let $\V$ be a quantale and $\emT$ an associative lax extension to $\Rels{\V}$ of a monad $\mT$ on $\Set$. The category $\URels{\TV}$ forms an ordered category when the hom-sets $\URels{\TV}(X,Y)\subseteq\Rels{\V}(TX,Y)$ are equipped with the pointwise order inherited from $\Rels{\V}$, since the Kleisli convolution preserves this order on the left and right (see~\ref{LaxExt}\eqref{cond:LaxExt1}). 
 
The infimum $\Wedge_{i\in I}\varphi_i$ in $\Rels{\TV}(X,Y)$ of a family of unitary $\TV$-relations $(\varphi_i:X\krelto Y)_{i\in I}$ is again unitary:
\begin{align*}
\big(\Wedge_{i\in I}\varphi_i\big)\cdot\eT 1_X\le \Wedge_{i\in I}(\varphi_i\cdot\eT 1_X)&=\Wedge_{i\in I}\varphi_i\dand\\
e_Y^\circ\cdot\eT\big(\Wedge_{i\in I}\varphi_i\big)\cdot m_X^\circ\le\Wedge_{i\in I}(e_Y^\circ\cdot\eT\varphi_i\cdot m_X^\circ)&=\Wedge_{i\in I}\varphi_i.
\end{align*}
Hence, the ordered category $\URels{\TV}$ has complete hom-sets. Moreover, for a unitary $\TV$-relation $\varphi:X\krelto Y$, the map $(-)\kleisli\varphi:\URels{\TV}(Y,Z)\to\URels{\TV}(X,Z)$ has a right adjoint $(-)\multimapinv\varphi:\URels{\TV}(X,Z)\to\URels{\TV}(Y,Z)$ given by
\[
(\psi\multimapinv\varphi)=(\psi\Vhoml(\eT\varphi\cdot m_X^\circ)).
\]
Indeed, for all $\TV$-relations $\gamma:Y\krelto Z$ and $\psi:X\krelto Z$,
\[
 \gamma\kleisli\varphi\le\psi\iff\gamma\cdot\eT\varphi\cdot m_X^\circ\le\psi
\iff \gamma\le(\psi\Vhoml(\eT\varphi\cdot m_X^\circ)).
\]
If $\psi$ is moreover unitary, then by associativity of the Kleisli convolution,
\begin{align*}
(1_Z^\sharp\kleisli(\psi\multimapinv\varphi))\kleisli\varphi &\le 1_Z^\sharp\kleisli\psi=\psi\dand\\
((\psi\multimapinv\varphi)\kleisli 1_Y^\sharp)\kleisli\varphi &\le(\psi\multimapinv\varphi)\kleisli\varphi \le \psi;
\end{align*}
therefore, $1_Z^\sharp\kleisli(\psi\multimapinv\varphi)\le(\psi\multimapinv\varphi)$ and $(\psi\multimapinv\varphi)\kleisli 1_Y^\sharp\le(\psi\multimapinv\varphi)$, that is, $\psi\multimapinv\varphi$ is unitary. Hence, $\psi\multimapinv \varphi$ is the extension in $\URels{\TV}$ of $\psi$ along $\varphi$:
\[
%\xymatrix{X\ar@{-^`}[r]|-{\object@{|}}^-{\rule[-3pt]{0pt}{0pt}\psi}\ar@{-^`}[d]|-{\object@{|}}_{\varphi\rule{1pt}{0pt}}&Y\\
%Z\ar@{--^`}[ur]|-{\object@{|}}_{\psi\:\!\multimapinv\:\!\varphi}&}
\xymatrix{X\ar@{-^`}[r]^-{\rule[-3pt]{0pt}{0pt}\varphi}\ar@{}[r]|-{\object@{|}}\ar@{-_`}[d]|-{\object@{|}}_{\psi\rule{2pt}{0pt}}="a"&Z\\
Y\ar@{_'--}[ur]|-{\object@{|}}_{\psi\:\!\multimapinv\:\!\varphi}="b"&\ar@{}^-{\ge}"a";"b"}
\]
Hence, as it is left adjoint, the map $(-)\kleisli\varphi:\URels{\TV}(Y,Z)\to\URels{\TV}(X,Z)$ preserves suprema.

\subsection{Maps as $\TV$-relations}
Let $\V$ be a quantale and $\emT$ an associative lax extension to $\Rels{\V}$ of a monad $\mT$ on $\Set$. A map $f:X\to Y$ gives rise to a unitary $\TV$-relation $f^\sharp:Y\krelto X$ via
\[
f^\sharp:=e_X^\circ\cdot\eT(f^\circ).
\]
This definition is consistent with the notation used for the identity of the Kleisli convolution, and one also has $f^\sharp\kleisli g^\sharp=(g\cdot f)^\sharp$ for all maps $f:X\to Y$ and $g:Y\to Z$ in $\Set$. This defines a functor
\[
 (-)^\sharp:\Set\to\URels{\TV}^\op
\]
that maps objects identically. For a unitary $\TV$-relation $\varphi:X\krelto Y$, one moreover has
\[
f^\sharp\kleisli\varphi=f^\circ\cdot\varphi
\]
for all maps $f:Z\to Y$. Indeed,
\begin{align*}
f^\sharp\kleisli\varphi&=e_X^\circ\cdot\eT(f^\circ\cdot\varphi)\cdot m_X^\circ=e_X^\circ\cdot(Tf)^\circ\cdot\eT\varphi\cdot m_X^\circ\\
&=f^\circ\cdot e_Y^\circ\cdot\eT 1_Y\cdot\eT\varphi\cdot m_X^\circ=f^\circ\cdot(1_X^\sharp\kleisli\varphi)=f^\circ\cdot\varphi.
\end{align*}

\subsection{An adjunction between $\URels{\TV}^\op$ and $\Set$}\label{ssec:flatten}
We now proceed to show that the functor $(-)^\sharp:\Set\to\URels{\TV}^\op$ is left adjoint to the contravariant hom-functor
\[
 \URels{\TV}(-,1):\URels{\TV}^\op\to\Set,
\]
where $1=\{\star\}$ denotes a singleton. We identify elements $x\in X$ with maps $x:1\to X$, and to a unitary $\TV$-relation $\psi:X\krelto Y$ we associate the map $\mate{\psi}:Y\to\URels{\TV}(X,1)$ defined by
\[
\mate{\psi}(y):=y^\sharp\kleisli\psi=y^\circ\cdot\psi=\psi(-,y)
\]
for all $y\in Y$ (the third equality follows by definition of composition in $\Rels{\V}$); here, $\psi(-,y)(\fx,\star):=\psi(\fx,y)$.

\begin{lem}\label{lem:UnitIffUnitPtwise}
Let $\V$ be a quantale, $\emT$ an associative lax extension to $\Rels{\V}$ of a monad $\mT$ on $\Set$, and $\varphi:X\krelto Y$ a $\TV$-relation. Then $\varphi$ is unitary if and only if $y^\circ\cdot\varphi$ is unitary for all $y\in Y$.
\end{lem}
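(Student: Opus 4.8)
The plan is to show that each of the two equations defining unitarity of $\varphi$ holds if and only if the corresponding equation holds for every $\varphi_y:=y^\circ\cdot\varphi:X\krelto 1$ (recall that $\varphi_y(\fx,\star)=\varphi(\fx,y)$). The underlying principle I would isolate first is that a $\V$-relation is determined by its columns: for $\V$-relations $r,s:TX\relto Y$ one has $r=s$ if and only if $y^\circ\cdot r=y^\circ\cdot s$ for all $y\in Y$, since $(y^\circ\cdot r)(\fx,\star)=r(\fx,y)$.

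First I would dispatch the right-unit condition. Whiskering $\varphi\cdot\eT 1_X$ on the left by $y^\circ$ and using associativity of $\V$-relational composition gives $y^\circ\cdot(\varphi\cdot\eT 1_X)=\varphi_y\cdot\eT 1_X$. By the column principle, $\varphi\cdot\eT 1_X=\varphi$ is therefore equivalent to $\varphi_y\cdot\eT 1_X=\varphi_y$ holding for all $y$, which is exactly the right-unit condition for each $\varphi_y$.

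The substantive step is the left-unit condition $e_Y^\circ\cdot\eT\varphi\cdot m_X^\circ=\varphi$, where one must commute $y^\circ$ past both the extension $\eT$ and the unit $e_Y^\circ$. Two facts do this. Applying \ref{LaxExt}\eqref{cond:LaxExt4} to the map $y:1\to Y$ yields $\eT\varphi_y=\eT(y^\circ\cdot\varphi)=(Ty)^\circ\cdot\eT\varphi$. Naturality of $e$ at $y$ gives $e_Y\cdot y=Ty\cdot e_1$ in $\Set$; taking opposites, and using that whiskering by maps commutes with involution (\ref{ssec:MapsV-Rel}), this becomes $y^\circ\cdot e_Y^\circ=e_1^\circ\cdot(Ty)^\circ$. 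Combining the two,
\[
y^\circ\cdot(e_Y^\circ\cdot\eT\varphi\cdot m_X^\circ)=e_1^\circ\cdot(Ty)^\circ\cdot\eT\varphi\cdot m_X^\circ=e_1^\circ\cdot\eT\varphi_y\cdot m_X^\circ,
\]
so by the column principle $e_Y^\circ\cdot\eT\varphi\cdot m_X^\circ=\varphi$ is equivalent to $e_1^\circ\cdot\eT\varphi_y\cdot m_X^\circ=\varphi_y$ for all $y$, that is, to the left-unit condition for each $\varphi_y$. Since $\varphi$ (respectively each $\varphi_y$) is unitary precisely when both its conditions hold, the claimed equivalence follows. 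I expect the only real obstacle to be the bookkeeping in this last step: correctly identifying which instance of \eqref{cond:LaxExt4} and which naturality square to invoke so that the column selector $y^\circ$ slides cleanly through $\eT$ and past $e_Y^\circ$. Once those two identities are in hand, both directions reduce to the column principle, and associativity of $\emT$ is not even needed beyond what is already built into the defining conditions of a lax extension.
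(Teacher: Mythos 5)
Your proof is correct and follows essentially the same route as the paper's: the key step in both is sliding $y^\circ$ past $e_Y^\circ$ (naturality of $e$ plus compatibility of whiskering with involution) and past $\eT$ (condition \ref{LaxExt}\eqref{cond:LaxExt4}) to get $y^\circ\cdot(e_Y^\circ\cdot\eT\varphi\cdot m_X^\circ)=e_1^\circ\cdot\eT(y^\circ\cdot\varphi)\cdot m_X^\circ$, and then concluding via the fact that a $\V$-relation is determined by its columns. The only (harmless) divergence is that you obtain the forward implication from the same biconditional computation--correctly observing along the way that associativity of $\emT$ is never used--whereas the paper dispatches it in one line by noting that $y^\circ\cdot\varphi=y^\sharp\kleisli\varphi$ is a Kleisli convolution of unitary relations.
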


\begin{proof}
If $\varphi$ is unitary, then $y^\circ\cdot\varphi=y^\sharp\kleisli\varphi$ is unitary as well. To verify the other implication, suppose that $y^\circ\cdot\varphi$ is unitary for all $y\in Y$. Then one has
\[
y^\circ\cdot(\varphi\kleisli 1_X^\sharp)=(y^\circ\cdot\varphi)\kleisli1_X^\sharp=y^\circ\cdot\varphi
\]
and
\[
y^\circ\cdot(1_Y^\sharp\kleisli\varphi)
=y^\circ\cdot e_Y^\circ\cdot\eT\varphi\cdot m_X^\circ
=e_1^\circ\cdot(Ty)^\circ\cdot\eT\varphi\cdot m_X^\circ
=e_1^\circ\cdot\eT(y^\circ\cdot\varphi)\cdot m_X^\circ
=y^\circ\cdot\varphi
\]
for all $y\in Y$. Since for any $\V$-relation $r:Z\relto Y$, one has $y^\circ\cdot r(z,\star)=r(z,y)$, one can conclude that $\varphi\kleisli 1_X^\sharp=\varphi$ and $1_Y^\sharp\kleisli\varphi=\varphi$.
\end{proof}

\begin{prop}\label{prop:URelSetAdjun}
Let $\V$ be a quantale and $\emT$ an associative lax extension to $\Rels{\V}$ of a monad $\mT$ on $\Set$.
\begin{enumerate}
\item For a set $X$, the product $1^X=\prod_{x\in X}1_x$ in $\URels{\TV}$  (with $1_x=1$ for all $x\in X$) exists, and can be chosen as $1^X=X$ with projections $\pi_x=x^\sharp:X\krelto 1$ ($x\in X$). 
\item The contravariant hom-functor
\[
\URels{\TV}(-,1):\URels{\TV}^\op\to\Set
\]
has $(-)^\sharp:\Set\to\URels{\TV}^\op$ as left adjoint. The unit and counit of the associated adjunction are given by the Yoneda maps
\begin{align*}
 \yoneda_X=(1_X^\sharp)^\flat:X&\to\URels{\TV}(X,1),\quad x\mapsto x^\sharp\\
\intertext{and the evaluation $\V$-relations}
 \epsilon_X:X&\krelto\URels{\TV}(X,1),\quad\epsilon_X(\fx,\psi)=\psi(\fx,\star),
\end{align*}
respectively.
\end{enumerate}
\end{prop}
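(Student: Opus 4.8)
The plan is to prove (1) directly from the formula $f^\sharp\kleisli\varphi=f^\circ\cdot\varphi$ (for a unitary $\TV$-relation $\varphi$ and a map $f$) together with Lemma~\ref{lem:UnitIffUnitPtwise}, and then to read off (2) as a formal consequence of the resulting universal property.

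For (1), I would verify the universal property of the product. Since an object of $\URels{\TV}$ is a set, a cone over the discrete family $(1_x)_{x\in X}$ with vertex $Z$ is a family of unitary $\TV$-relations $(\psi_x\colon Z\krelto 1)_{x\in X}$, and I claim there is a unique $\psi\colon Z\krelto X$ in $\URels{\TV}$ with $\pi_x\kleisli\psi=\psi_x$ for every $x$. As $\pi_x=x^\sharp$ for the map $x\colon 1\to X$, the formula $f^\sharp\kleisli\varphi=f^\circ\cdot\varphi$ gives $\pi_x\kleisli\psi=x^\circ\cdot\psi=\psi(-,x)$ for any $\V$-relation $\psi\colon TZ\relto X$. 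Hence the cone condition forces $\psi(-,x)=\psi_x$ for all $x$, which determines a unique $\V$-relation $\psi\colon TZ\times X\to\V$ via $\psi(\fz,x):=\psi_x(\fz,\star)$. It remains only to see that this $\psi$ is a morphism of $\URels{\TV}$, i.e.\ that it is unitary; but by Lemma~\ref{lem:UnitIffUnitPtwise} this holds precisely because each column $x^\circ\cdot\psi=\psi_x$ is unitary. Thus $X$ with the projections $x^\sharp$ is the asserted product $1^X$.

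For (2), the universal property just established gives, for every set $X$ and every object $Z$, a bijection between morphisms $\psi\colon Z\krelto X$ and families $(\psi(-,x))_{x\in X}$ of unitary $\TV$-relations $Z\krelto 1$, that is, maps $X\to\URels{\TV}(Z,1)$. In the notation of Subsection~\ref{ssec:flatten} this bijection is exactly $\psi\mapsto\mate{\psi}$ with $\mate{\psi}(x)=\psi(-,x)$, and it is the hom-isomorphism
\[
\URels{\TV}^\op(X^\sharp,Z)=\URels{\TV}(Z,X)\cong\Set(X,\URels{\TV}(Z,1))
\]
witnessing $(-)^\sharp\dashv\URels{\TV}(-,1)$. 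Naturality in $Z$ reduces to compatibility of $\mate{(-)}$ with Kleisli convolution, and naturality in $X$ to the functoriality of $(-)^\sharp$ together with postcomposition in $\Set$; both follow from the identity $f^\sharp\kleisli\psi=f^\circ\cdot\psi$ by a short computation.

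Finally I would pin down the unit and counit by feeding identities through this bijection. The unit at $X$ is the image of $\mathrm{id}_X=1_X^\sharp$ under $\psi\mapsto\mate{\psi}$, namely the map sending $x$ to $x^\sharp\kleisli 1_X^\sharp=x^\sharp$, which is the Yoneda map $\yoneda_X$. The counit at $X$ is the morphism $X\krelto\URels{\TV}(X,1)$ corresponding to $\mathrm{id}_{\URels{\TV}(X,1)}$ under the inverse bijection; the requirement $\mate{\epsilon_X}=\mathrm{id}$ forces $\epsilon_X(\fx,\psi)=\psi(\fx,\star)$, i.e.\ the evaluation $\V$-relation. The one point demanding care is the bookkeeping of variance in $\URels{\TV}^\op$ and the verification of naturality; the substantive content---that a $\TV$-relation is unitary exactly when all its columns $\psi(-,x)$ are---is entirely supplied by Lemma~\ref{lem:UnitIffUnitPtwise}, so once (1) is in hand, (2) is formal.
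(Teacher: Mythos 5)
Your proof is correct and follows essentially the same route as the paper: part (1) is established by defining $\psi(\fz,x):=\psi_x(\fz,\star)$, invoking Lemma~\ref{lem:UnitIffUnitPtwise} for unitarity, and reading off uniqueness, while part (2) is the formal consequence that a representable functor on a category with the relevant products acquires a left adjoint, with the unit and counit identified exactly as you do. The only nitpick is that the identity $x^\sharp\kleisli\psi=x^\circ\cdot\psi$ holds for \emph{unitary} $\psi$ rather than arbitrary $\V$-relations, but this is harmless since every $\psi$ appearing in your argument is unitary where the identity is used.
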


\begin{proof}\mbox{}
\begin{enumerate}
\item For a family $(\phi_x:Y\krelto 1)_{x\in X}$ of unitary $\TV$-relations, one can define a $\TV$-relation $\phi:Y\krelto X$ by setting $\phi(\fy,x)=\phi_x(\fy,\star)$ for all $\fy\in TY$. Since $x^\circ\cdot\phi=\phi_x$ is unitary for all $x\in X$, then so is $\phi$ by Lemma \ref{lem:UnitIffUnitPtwise}. Unicity of the connecting morphism $\phi:Y\krelto X$ follows from its definition. 
\item Since $1^X=\prod_{x\in X}1_x$ exists in $\URels{\TV}$ for all sets $X$, the representable functor $H=\URels{\TV}^\op(1,-):\URels{\TV}^\op\to\Set$ has a left adjoint $F=1^{(-)}$ that sends a set $X$ to its product $1^X=X$, and a map $f:X\to Y$ to the unitary $\TV$-relation $f^\sharp:Y\krelto X$. Similarly, it follows that $\yoneda$ is the unit of the adjunction and $\epsilon$ its counit. 
\end{enumerate}
\end{proof}

\subsection{The $(\mT,\V)$-powerset monad}\label{ssec:DiscretePresheafMonad}
Let $\V$ be a quantale, $\emT$ an associative lax extension to $\Rels{\V}$ of a monad $\mT$ on $\Set$. The adjunction described in Proposition~\ref{prop:URelSetAdjun} induces a monad
\[
\mPTV=(\PTV,\yonmult,\yoneda) 
\]
on $\Set$, the \df{$(\mT,\V)$-powerset monad} (or, more generically, the \df{discrete presheaf monad}) associated to $\emT$, where
\begin{alignat*}{3}
\PTV X&=\URels{\TV}(X,1),\qquad&\PTV f(\psi)&=\psi\kleisli f^\sharp,\\
\yonmult_X(\Psi)&=\Psi\kleisli\epsilon_X, &  \yoneda_X(x)&=x^\sharp,
\end{alignat*}
for all $x\in X$, $f:X\to Y$, and unitary $\TV$-relations $\psi:X\krelto 1$, $\Psi:\PTV X\krelto 1$. Let us emphasize that $\mPTV$  depends both on a choice of a quantale $\V$, as well as that of an associative lax extension $\emT$:
\[
\mPTV=\mPTV(\mT,\emT)=\mPTV(\mT,\V,\emT).
\]

\begin{ex}\label{ex:V-PowersetAsPresheaf}
Consider a quantale $\V$, and the identity monad $\mT=\mId=(1_\Set,1,1)$ on $\Set$ extended to the identity monad $\emId=(1_{\Rels{\V}},1,1)$ on $\Rels{\V}$. Unitary $\TV$-relations are simply $\V$-relations, so, denoting by $\mPV=(\PV,\mu,\eta)$ the $(\mId,\V)$-powerset monad associated to $\emId$, we compute
\begin{alignat*}{3}
\PV X&=\Rels{\V}(X,1),\qquad&\PV f(r)&=r\cdot f^\circ,\\
\mu_X(R)&=R\cdot\ev_X, &  \eta_X(x)&=x^\circ,
\end{alignat*}
for all $x\in X$, $f:X\to Y$, and $\V$-relations $r:X\relto 1$, $R:\PV X\relto 1$ (where $\ev_X:X\relto\Rels{V}(X,1)$ is given by $\ev_X(x,r)=r(x,\star)$). By further identifying $\V$-relations $X\relto 1$ with maps $X\to\V$, we obtain the $\V$-powerset monad of~\ref{ssec:PV}. However, from now on we will only use the description given here, as it allows for more concise arguments\footnote{In view of this example, $\mP_{(\mT,\V)}$ is a more logical notation for the $(\mT,\V)$-powerset monad $\mPTV$; nevertheless, we favor the latter, simpler, notation whenever $\mT$ is not the identity monad.}.
\end{ex}

\subsection{The Kleisli category of $\mPV$}\label{ssec:KleisliPV}
The  Kleisli category of the $\V$-powerset monad $\mPV$ has sets as objects, and maps $f:Y\to\Rels{\V}(X,1)$ as morphisms from $Y$ to $Z$. Kleisli composition $f\kleisli g:Z\to\Rels{\V}(X,1)$ of $g:Z\to\Rels{\V}(Y,1)$ and $f:Y\to\Rels{\V}(X,1)$ is
\[
f\kleisli g(z)=\mu_X\cdot \PV f\cdot g(z)=g(z)\cdot f^\circ\cdot\ev_X.
\]
The comparison functor $K:\Set_{\mPV}\to\Rels{\V}^\op$ sends a map $f:Y\to\Rels{\V}(X,1)$ to the $\V$-relation $f^\circ\cdot\ev_X:X\relto Y$, and has an inverse $(-)^\flat:\Rels{\V}^\op\to\Set_{\mPV}$ that sends a $\V$-relation $r:X\relto Y$ to its mate $r^\flat:Y\to\Rels{\V}(X,1)$ (see~\ref{ssec:flatten}). Indeed, one verifies that $(-)^\flat$ is a functor:
\[
r^\flat\kleisli s^\flat=(s\cdot r)^\flat,
\]
and that it is an inverse to $K$:
\[
(r^\flat)^\circ\cdot\ev_X=r\dand(f^\circ\cdot\ev_X)^\flat=f,
\] 
for all $\V$-relations $r:X\relto Y$, $s:Y\relto Z$ and maps $f:Y\to\Rels{\V}(X,1)$. When hom-sets of $\Set_\mPV$ are equipped with the pointwise order:
\[
f\le g\iff\forall x\in X\ (f(x)\le g(x))
\]
(for $f,g:Y\to\Rels{(\V,1)}$), the isomorphism $\Rels{\V}^\op\cong\Set_\mPV$ is an order-isomorphism.

%%%%%%%%%%%%%%%%%%revise from here down%%%%%%%%%%%%%%%%%%%%%%%%%%%%%%%%%%%%%%%%%%%%%%%%%%%%%%%%%%%%%%%%%%%%%%%%%%%%%%%%%%%%%%%%%%%%%%%%%%%%%%%%%%%%%%%%%%%%%%%%%%%%%%%%%%%%%%%%%%%%%%%%%%%%%%%%%%%%%%%%%%%%%%%%%%%%%%%%%%%%%%%%%%%%%%%%%%%%%%%%%%%%%%%%%%%%%%%%%%%%
\section{The enriched Kleisli extension}\label{sec:Kleisli}%%%%%%%%%%%%%%%%%%%%%%%%%%%%%

\subsection{The Eilenberg--Moore category of $\mPV$}\label{ssec:Actions}
The $\V$-powerset monad $\mPV=(\PV,\mu,\eta)$ is induced from the adjunction $(-)^\circ\dashv(\Rels{\V}(-,1)):\Rels{\V}^\op\to\Set$. By \cite{PedTho:89} and since $\Rels{V}^\op\cong\Rels{\V^\op}$, the Eilenberg--Moore category of $\mPV$ is isomorphic to the category of left $\V^\op$-actions in the category $\Sup$ of complete lattices and sup-maps; that is, $\Set^\mPV$ is the category of right $\V$-actions in $\Sup$. We now describe this correspondence in our setting.

A right $\V$-action $(-)\ast(-):X\times\V\to X$ defines a $\mPV$-algebra structure $a:\PV X\to X$ via
\[
a(\phi):=\Vee_{x\in X}x\ast\phi(x,\star)
\]
for all $\phi\in\PV X$.

For $v\in\V$, we define the $\V$-relation $\eta_X(x)\otimes v:X\relto 1$ by $(\eta_X(x)\otimes v)(y):=\eta_X(x)(y,\star)\otimes v$ (that is, $\eta_X(x)\otimes v$ sends $(x,\star)$ to $v$, and all other pairs $(y,\star)$ to $\bot$). A $\mPV$-algebra $(X,a)$ yields a right $\V$-action on $X$ in $\Sup$ via
\[
x\ast v:=a(\eta_X(x)\otimes v),
\]
(for all $x\in X$ and $v\in\V$), so that  $(-)\ast(-):X\times\V\to X$ is a map that preserves suprema in each variable and satisfies
\[
x\ast(v\otimes u)=(x\ast v)\ast u\dand x\ast k=x
\]
for all $u,v\in\V$, $x\in X$. In particular, for $\phi\in\PV X$ and $v\in\V$, the action induced on the free $\mPV$-algebra $(\PV X,\mu_X)$ is given by $\phi\ast v=\mu_X(d_{\PV X}(\phi)\otimes v)=(\eta_{\PV X}(\phi)\otimes v)\cdot\ev_X$, and one observes
\[
\phi\ast v=\phi\otimes v.
\]

From now on, we denote the category $\Set^\mPV$ of right $\V$-actions in $\Sup$ by
\[
\Sup^\V
\]
as it is isomorphic to the functor category of $\V$ into $\Sup$ (with $\V=(\V,\otimes,k)$ considered as a one-object category). 

\subsection{From $\mPV$ to $\mT$}
Certain monads $\mT$ on $\Set$ equipped with a monad morphism $\tau:\mPV\to\mT$ will allow for a particular lax extension to $\Rels{\V}$. With is in mind, we mention the following equivalences.

\begin{prop}\label{prop:PMndEquiv}
For a monad $\mT=(T,m,e)$ on $\Set$, there is a one-to-one correspondence between:
\begin{enumerate}[label=\normalfont(\roman{*})]
\item monad morphisms $\tau:\mPV\to\mT$;
\item\label{cond2:prop:PMndEquiv} extensions $E$ of the left adjoint $F_\mT:\Set\to\Set_\mT$ along the functor $(-)^\circ:\Set\to\Rels{\V}^\op$:
\[
\xymatrix{\Rels{\V}^\op\ar[r]^-{E}&\Set_\mT\\
\Set\ar[u]^-{(-)^\circ}\ar[ur]_-{F_\mT}&}
\]
\item\label{cond3:prop:PMndEquiv} liftings $L$ of the right adjoint $G_\mT:\Set_\mT\to\Set$ along the forgetful functor $\Sup^\V\to\Set$:
\[
\xymatrix{\Set_\mT\ar[r]^L\ar[dr]_{G_\mT}&\Sup^\V\ar[d]\\
&\Set}
\]
\item $\V$-actions in $\Sup$ on $TX$ such that $Tf:TX\to TY$ and $m_X:TTX\to TX$ are equivariant sup-maps for all maps $f:X\to Y$ and sets $X$.
\end{enumerate}
\end{prop}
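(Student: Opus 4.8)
The plan is to take the \V-actions of condition~(iv) as the hub and to reduce each of the other three conditions to it, exploiting the two isomorphisms already established: the order-isomorphism $\Rels{\V}^\op\cong\Set_\mPV$ of~\ref{ssec:KleisliPV}, under which $(-)^\circ$ becomes the free functor $F_\mPV$ and the comparison functor $K$ satisfies $K\cdot F_\mPV=(-)^\circ$; and the identification $\Sup^\V=\Set^\mPV$ of~\ref{ssec:Actions}, under which a \V-action on a set is the same thing as a $\mPV$-algebra structure on it. With these in hand, the statement becomes an assertion about the various incarnations of a monad morphism $\mPV\to\mT$.

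First I would dispatch (iii)$\Leftrightarrow$(iv). Since the forgetful functor $\Sup^\V\to\Set$ is faithful and $G_\mT$ sends $X$ to $TX$, the only datum in a lifting $L$ of $G_\mT$ is a choice of \V-action on each $TX$; the action of $L$ on a Kleisli morphism $f:X\to TY$ is forced to be $G_\mT f=m_Y\cdot Tf$, and functoriality of $L$ is then inherited from $G_\mT$ by faithfulness. Thus a lifting exists exactly when every $m_Y\cdot Tf$ is equivariant. Taking $f=F_\mT g$ gives $G_\mT f=Tg$, and taking $f:TX\to X$ to be $1_{TX}$ gives $G_\mT f=m_X$; conversely, each $m_Y\cdot Tf$ is a composite of maps of these two generating forms, so equivariance of all $Tg$ and all $m_X$ suffices. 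This is precisely condition~(iv).

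Next I would prove (i)$\Leftrightarrow$(iv) by exhibiting mutually inverse assignments. To a monad morphism $\tau:\mPV\to\mT$ I assign, on each $TX$, the $\mPV$-algebra structure $h_X:=m_X\cdot\tau_{TX}:\PV TX\to TX$, which is the value at the free algebra $(TX,m_X)$ of the Eilenberg--Moore functor $\Set^\mT\to\Set^\mPV$ induced by $\tau$; equivariance of $Tf$ and of $m_X$ then follow from naturality of $\tau$ and $m$ together with $\mT$-associativity $m_X\cdot m_{TX}=m_X\cdot Tm_X$. Conversely, from a family $(h_X)$ as in~(iv) I set $\tau_X:=h_X\cdot\PV e_X:\PV X\to TX$; naturality of $\tau$ uses equivariance of the maps $Tg$ and naturality of $e$, while the two monad-morphism axioms use the unit law of $h_X$ and equivariance of $m_X$. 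The assignments are mutually inverse: $h_X\cdot\PV e_X=\tau_X$ follows from naturality of $\tau$ and $m_X\cdot Te_X=1$, while $m_X\cdot\tau_{TX}=h_X$ follows from equivariance of $m_X$ and $m_X\cdot e_{TX}=1$. Composing with (iii)$\Leftrightarrow$(iv) yields (i)$\Leftrightarrow$(iii), which avoids having to argue that a lifting defined on the Kleisli category of $\mT$ extends to an Eilenberg--Moore functor.

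Finally, (i)$\Leftrightarrow$(ii) is the Kleisli half of the classical correspondence between monad morphisms and identity-on-objects functors over the base commuting with the free functors: a morphism $\tau:\mPV\to\mT$ induces $\bar\tau:\Set_\mPV\to\Set_\mT$, and $E:=\bar\tau\cdot(-)^\flat$ is an extension of $F_\mT$ along $(-)^\circ$, the identity $E\cdot(-)^\circ=F_\mT$ being $\bar\tau\cdot F_\mPV=F_\mT$ rewritten via $K\cdot F_\mPV=(-)^\circ$; conversely $\bar\tau:=E\cdot K$ recovers $\tau$. I expect the main obstacle to lie in the bookkeeping of (i)$\Leftrightarrow$(iv): one must match the multiplicativity axiom of $\tau$ with equivariance of $m_X$ specifically (and not with an algebra-associativity condition), and apply the \V-action/$\mPV$-algebra dictionary of~\ref{ssec:Actions} with the correct variance on the free algebras $TX$.
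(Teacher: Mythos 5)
Your proof is correct and follows essentially the same route as the paper's (which merely cites the classical monad-morphism/Kleisli-functor correspondence for (i)$\Leftrightarrow$(ii), the formula $G_\mT f=m_Y\cdot Tf$ for (i)$\Leftrightarrow$(iii), and declares (iv) a restatement of (iii)); you simply make explicit the mutually inverse assignments $\tau\mapsto m\cdot\tau T$ and $h\mapsto h\cdot P_\V e$ and the reduction of equivariance to the generators $Tg$ and $m_X$, pivoting around (iv) instead of (iii).
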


\begin{proof}
The equivalence between (i) and (ii) is a particular instance of the one-to-one correspondence between monad morphisms and functors between Kleisli categories of the respective monads. The equivalence between (i) and (iii) follows from the explicit description of $G_\mT$ on $\Set_\mT$-morphisms $f:X\to TY$ as $G_\mT f=m_Y\cdot Tf$. (iv) is just a restatement of (iii) in which the objects and morphisms of the Eilenberg--Moore category of $\mPV$ are described explicitly.
\end{proof}

\subsection{$\V$-power-enriched monads}\label{ssec:Free}
As mentioned in \cite{PedTho:89} and \cite{Sea:10}, there is a monotone functor
\[
\Sup^\V\to\Cats{\V}
\]
that sends a right $\V$-action $(X,a)$ to the $\V$-category $(X,(a^\dashv)^\circ\cdot\ev_X)=(X,K(a^\dashv))$  (that is, $K:\Set_\mPV\to\Rels{\V}^\op$ denotes the left adjoint of $(-)^\flat$, see \ref{ssec:KleisliPV}), where $a^\dashv:X\to \PV X$ is the right adjoint retract of $a$:
\[
1_{\PV X}\le a^\dashv\cdot a\qquad\text{and}\qquad a\cdot a^\dashv=1_X,
\]
and induces the $\V$-relation $K(a^\dashv):X\relto X$:
\[
K(a^\dashv)(x,y)=\Vee\{\phi(x,\star)\mid\phi\in\Rels{\V}(X,1):a(\phi)= y\}
\]
for all $x,y\in X$. Writing $a(\phi)=\Vee_{x\in X}x\ast\phi(x,\star)$ (see~\ref{ssec:Actions}), for $v\in\V$, one observes
\[
x\ast v\le y\iff v\le K(a^\dashv)(x,y).
\]
Setting $v=K(a^\dashv)(x,y)$, one has $x\ast K(a^\dashv)(x,y)\le y$, so that with $k\le K(a^\dashv)(x,x)$, one obtains
\[
\Vee_{x\in X}x\ast K(a^\dashv)(x,y)=y
\]
for all $x,y\in X$. Moreover, a free $\mPV$-algebra $(\PV X, \mu_X)$ has an underlying $\V$-category with internal hom $K(\mu_X^\dashv):\PV X\relto \PV X$ given by
\begin{align*}
K(\mu_X^\dashv)(r,s)&=\Vee\{R(r,\star)\mid R\in\Rels{\V}(\PV X,1):t(-,\star)\otimes R(t,\star)\le s(-,\star)\text{ for all }t\in\PV X\}\\
&=\Wedge\{(r(x,\star)\Vhomr s(x,\star))\mid x\in X\}=(s\Vhoml r)
\end{align*}
for all $r,s\in\Rels{\V}(X,1)$ (see \ref{ssec:OrdV-Rel}). 

By Proposition~\ref{prop:PMndEquiv}, a morphism $\tau:\mPV\to\mT$ of monads on $\Set$ equips the underlying set $TX$ of a free $\mT$-algebra with the internal hom
\[
(\fy\Vhoml \fx):=K(m_X\cdot\tau_{TX})^\dashv(\fx,\fy)
\]
for all $\fx\in TX$, $\fy\in TY$. The sets $TX$ are therefore $\V$-categories, and are equipped with an order (see \ref{ssec:SmallV-Cats}) that is inherited pointwise by the hom-sets $\Set(X,TY)$:
\[
f\le g\iff \forall x\in X\ (f(x)\le g(x))
\]
for all $f,g:X\to TY$. 

A \df{$\V$-power-enriched monad} is a pair $(\mT,\tau)$ with $\mT$ a monad on $\Set$ and $\tau:\mPV\to\mT$ a monad morphism such that
\[
f\le g\implies Lf\le Lg,
\]
for all $f,g:X\to TY$, where $L=m\cdot T(-):\Set_\mT\to\Sup^\V$ is the lifting of $G_\mT$ described in Proposition~\ref{prop:PMndEquiv}.  If $(\mT,\tau)$ is $\V$-power-enriched, then, with the mentioned order, $\Set_\mT$ becomes an ordered category, and the functors $E:\Rels{\V}\to\Set_\mT$ and $L:\Set_\mT\to\Sup^\V$ of Proposition~\ref{prop:PMndEquiv} become monotone.

We denote by $\Mnds{\Set}$ the category of monads on $\Set$ with their morphisms. A \df{morphism of $\V$-power-enriched monads} $\alpha:(\mS,\sigma)\to(\mT,\tau)$ is a morphism in the comma category $(\mPV\comma\Mnds{\Set})$, that is, the diagram
\[
\xymatrix@C=2em{&\mPV\ar[dl]_{\sigma}\ar[dr]^{\tau}&\\
\mS\ar[rr]^-{\alpha}&&\mT}
\]
must commute in $\Mnds{\Set}$.

\subsection{Kleisli extensions of $\V$-power-enriched monads}\label{Kleisli_ext}
Let $(\mT,\tau)$ be a $\V$-power-enriched monad. By composing the functors $(-)^\flat:\Rels{\V}^\op
\to\Set_\mPV$, $E=\Set_\tau:\Set_\mPV\to\Set_\mT$ and $L:\Set_\mT\to\Sup^\V$ of \ref{ssec:KleisliPV} and Proposition~\ref{prop:PMndEquiv}, one obtains a functor
\[
\xymatrix{(-)^\tau:\Rels{\V}^\op\ar[r]^-{(-)^\flat}&\Set_\mPV\ar[r]^-{\Set_\tau}&\Set_\mT\ar[r]^-{L}&\Sup^\V}
\]
that sends a set $X$ to $TX$, and a $\V$-relation $r:X\relto Y$ to the map $r^\tau:TY\to TX$, with
\[
r^\tau:=m_X\cdot T(\tau_X \cdot r^\flat).
\]
The \df{Kleisli extension} $\keT$ of $\mT$ to $\Rels{\V}$ (with respect to $\tau$) is defined by the functions $\keT=\keT_{X,Y}:\Rels{\V}(X,Y)\to\Rels{\V}(TX,TY)$ (indexed by sets $X$ and $Y$), with
\[
\keT r(\fx,\fy)=(r^\tau(\fy)\Vhoml\fx)
\]
for all $\V$-relations $r:X\relto Y$, and $\fx \in TX$, $\fy\in TY$.

\subsection{Kleisli extensions are lax extensions}
To prove that $\keT:\Rels{\V}\to\Rels{\V}$ is indeed a lax extension of the $\Set$-functor $T$, it is convenient to express the former as a composite of lax functors. In view of this, we remark that $\keT r$ (for a relation $r:X\relto Y$) can be written equivalently as
\[
\keT r=(r^\tau)^\ast:TX\relto TY\qquad\text{or}\qquad (\keT r)^\flat=(m_X\cdot\tau_{TX})^\dashv\cdot r^\tau
\]
(where the functor  $(-)^\ast:\Cats{\V}\to\Mods{\V}^\op$ is defined in \ref{ssec:V-CatopV-Mod}, and $(m_X\cdot\tau_{TX})^\dashv$ is right adjoint to $m_X\cdot\tau_{TX}$, see \ref{ssec:Free}). The Kleisli extension is therefore an ordered functor
\[
\xymatrix{\keT:\Rels{\V}^\op\ar[r]^-{(-)^\tau}&\Sup^\V\ar[r]&\Cats{\V}\ar[r]^-{(-)^\ast}&\Mods{\V}^\op.}
\]
There is moreover a lax functor $\Mods{\V}^\op\to\Rels{\V}^\op$ that assigns to a module its underlying relation: composition of $\V$-modules is composition of $\V$-relations, identity $\V$-modules are transitive and reflexive $\V$-relations, and $1_X\le a$ for any $\V$-category $(X,a)$. Hence, the Kleisli extension $\keT^\op$ can be decomposed as the top line of the commutative diagram
\begin{equation}\label{eq:KlExtLaxExt}
\xymatrix{\Rels{\V}^\op\ar[r]^-{(-)^\flat}&\Set_\mPV\ar[r]^-{\Set_\tau}&\Set_\mT\ar[r]^-{L}\ar[dr]|-{G_\mT}&\Sup^\V\ar[r]\ar[d]&\Cats{\V}\ar[r]^-{(-)^\ast}\ar[dl]&\Mods{\V}^\op\ar[r]&\Rels{\V}^\op\\
&\Set\ar[ul]|-{(-)^\circ}\ar[u]|-{F_\mPV}\ar[ur]|-{F_\mT}\ar[rr]^-{T}&&\Set&&}
\end{equation}
in which all arrows except $\Mods{\V}^\op\to\Rels{\V}^\op$ are functors, and the latter is a lax functor that fails only to preserve identities. 

\begin{prop}\label{prop_Kleisli ext}
Given a $\V$-power-enriched monad $(\mT,\tau)$, the Kleisli extension $\keT$ of $T$ to $\Rels{\V}$ yields a lax extension $\kemT=(\keT,m,e)$ of $\mT=(T,m,e)$ to $\Rels{\V}$. Moreover, $\keT$ preserves composition of $\V$-relations.
\end{prop}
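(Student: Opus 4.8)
The plan is to verify the six conditions of~\ref{LaxExt} directly, reading off the first ones from the factorization~\eqref{eq:KlExtLaxExt} and treating the unit- and multiplication-conditions by a pointwise computation that rests on the two descriptions $\keT r=(r^\tau)^\ast$ and $(\keT r)^\flat=(m_X\cdot\tau_{TX})^\dashv\cdot r^\tau$ recorded above. Monotonicity~\eqref{cond:LaxExt1} and preservation of composition come for free: the composite $\Rels{\V}^\op\xrightarrow{(-)^\tau}\Sup^\V\to\Cats{\V}\xrightarrow{(-)^\ast}\Mods{\V}^\op$ is an honest monotone functor (its factors are monotone by the $\V$-power-enrichment of $(\mT,\tau)$ together with~\ref{ssec:Free} and~\ref{ssec:V-CatopV-Mod}), and the remaining lax functor $\Mods{\V}^\op\to\Rels{\V}^\op$ preserves both order and composition, failing only to preserve identities. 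Since composition of $\V$-modules is computed as in $\Rels{\V}$, this yields at once $\keT r\le\keT r'$ for $r\le r'$ and the equality $\keT(s\cdot r)=\keT s\cdot\keT r$, settling~\eqref{cond:LaxExt5} and the ``moreover'' clause simultaneously.

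For the conditions featuring a map $f\colon Z\to Y$, the first step is the identity $(f^\circ)^\tau=Tf$. This follows from $(f^\circ)^\flat=\eta_Y\cdot f$, the monad-morphism law $\tau_Y\cdot\eta_Y=e_Y$, and the unit law $m_Y\cdot Te_Y=1_{TY}$. Writing $b$ for the $\V$-category structure on $TY$, we then get $\keT(f^\circ)=(Tf)^\ast=(Tf)^\circ\cdot b\ge(Tf)^\circ$ (as $1_{TY}\le b$), which is~\eqref{cond:LaxExt2}. Since $\keT r=(r^\tau)^\ast$ is a $\V$-module it satisfies $b\cdot\keT r=\keT r$, so preservation of composition gives $\keT(f^\circ\cdot r)=\keT(f^\circ)\cdot\keT r=(Tf)^\circ\cdot b\cdot\keT r=(Tf)^\circ\cdot\keT r$, which is~\eqref{cond:LaxExt4}.

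The two naturality conditions are checked pointwise, using $\keT r(\fx,\fy)=(r^\tau(\fy)\Vhoml\fx)$ and the Galois connection $v\le(\fx'\Vhoml\fx)\iff\fx\ast v\le\fx'$ for the $\V$-action $\ast$ on the sets $TX$ (see~\ref{ssec:Free}). For~\eqref{cond:LaxExt3}, evaluating both sides at $(\fx,y)$ shows it amounts to $r(x,y)\le\keT r(e_X(x),e_Y(y))$ whenever $e_X(x)=\fx$; since $r^\tau\cdot e_Y=\tau_X\cdot r^\flat$ (by naturality of $e$ and $m_X\cdot e_{TX}=1_{TX}$), this reduces to $e_X(x)\ast r(x,y)\le\tau_X(r(-,y))$, which follows from $\eta_X(x)\otimes r(x,y)\le r(-,y)$ in $\PV X$ and the equivariance $\tau_X(\eta_X(x)\otimes v)=e_X(x)\ast v$ --- valid because $\tau_X$ is a $\mPV$-algebra morphism onto $(TX,m_X\cdot\tau_{TX})$.

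Condition~\eqref{cond:LaxExt6} is the step I expect to be the main obstacle. Evaluating at $(\fx,\fY)$ and using that $m_X$ is an equivariant sup-map (Proposition~\ref{prop:PMndEquiv}), the inequality reduces to $\keT\keT r(\fX,\fY)\le\keT r(m_X\fX,m_Y\fY)$ for all $\fX\in TTX$, $\fY\in TTY$, which in turn follows from the key identity $m_X\cdot(\keT r)^\tau=r^\tau\cdot m_Y\colon TTY\to TX$. The point is that both sides equal $m_X\cdot Tr^\tau$: expanding $(\keT r)^\tau=m_{TX}\cdot T(\tau_{TX}\cdot(m_X\cdot\tau_{TX})^\dashv\cdot r^\tau)$ and using the associativity law $m_X\cdot m_{TX}=m_X\cdot Tm_X$ together with the retraction $(m_X\cdot\tau_{TX})\cdot(m_X\cdot\tau_{TX})^\dashv=1_{TX}$ collapses the left-hand side, while expanding $r^\tau=m_X\cdot T(\tau_X\cdot r^\flat)$ and using naturality of $m$ with the same associativity law collapses the right-hand side. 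The real work here is the bookkeeping with the monad laws and the adjunction $(m_X\cdot\tau_{TX})\dashv(m_X\cdot\tau_{TX})^\dashv$; once the identity is established, a single application of the Galois connection above closes the argument.
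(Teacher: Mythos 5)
Your proposal is correct and follows essentially the same route as the paper: monotonicity and preservation of composition from the factorization through $\Mods{\V}^\op$, the identity $(f^\circ)^\tau=Tf$ for the whiskering conditions, equivariance of $\tau_X$ for the unit condition, and the key identity $m_X\cdot(\keT r)^\tau=r^\tau\cdot m_Y$ (proved by the same collapse of both sides to $m_X\cdot Tr^\tau$) for the multiplication condition. The only differences are presentational --- you derive \ref{LaxExt}\eqref{cond:LaxExt4} from functoriality plus the module property instead of a pointwise computation, and you phrase \ref{LaxExt}\eqref{cond:LaxExt6} via the Galois connection for the $\V$-action rather than transposing through $\Set_\mPV$ --- neither of which changes the substance.
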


\begin{proof}
The fact that $\keT:\Rel\to\Rel$ is monotone and preserves composition of $\V$-relations follows from its decomposition as lax functors preserving composition in the first line of \eqref{eq:KlExtLaxExt}. The lax extension condition $(Tf)^\circ\le\keT(f)^\circ$ can be deduced from the diagram
\[
\xymatrix{\Rels{\V}^\op\ar[r]^-{(-)^\tau}&\Cats{\V}\ar[d]|(0.5){}="tag"\ar[r]^-{(-)^\ast}&\Mods{\V}^\op\ar[r]&\Rels{\V}^\op\ar[d]^{1_{\Rels{\V}^\op}}|(0.5){}="Set"\\
\Set\ar[u]^-{(-)^\circ}\ar[r]^{T}&\Set\ar[rr]^-{(-)^\circ}\ar@{}"tag";"Set"|(0.7){\le}&&\Rels{\V}^\op}
\]
in which the first line is $\keT^\op$. The condition $\keT(h^\circ\cdot r)=(Th)^\circ\cdot\eT r$ for all $\V$-relations $r:X\relto Y$ and maps $h:Z\to Y$ comes from
\[
\keT(h^\circ\cdot r)(\fx,\fz)=((r^\tau\cdot(h^\circ)^\tau(\fz))\Vhoml\fx)=((r^\tau\cdot Th(\fz))\Vhoml\fx)=(Th)^\circ\cdot\keT r(\fx,\fz)
\]
for all $\fx\in TX$, $\fz\in TZ$. To verify oplaxness of $e:1_\Rels{\V}\to \keT$, consider a $\V$-relation $r:X\relto Y$, and $x\in X$, $y\in Y$. Since $\tau_{X}:\PV X\to TX$ is a $\mPV$-algebra morphism, one has
\[
e_X(x)\ast r(x,y)=\tau_X(\eta_X(x)\otimes r(x,y))\le\tau_X(\Vee_{x\in X}\eta_X(x)\otimes r(x,y))=\tau_X\cdot r^\flat(y)=r^\tau\cdot e_Y(y),
\]
so that $r(x,y)\le ((r^\tau\cdot e_Y(y))\Vhoml e_X(x))=\eT r(e_X(x),e_Y(y))$, as required.

Via the isomorphism $\Rels{\V}^\op\cong\Set_\mPV$, naturality of $m^\circ:\keT\to\keT\keT$ is equivalent to $(m_X^\circ)^\flat\kleisli(\keT\keT r)^\flat=(\keT r)^\flat\kleisli (m_Y^\circ)^\flat$. As $(m_X^\circ)^\flat=d_{TX}\cdot m_X$ by commutativity of the left-most triangle in \eqref{eq:KlExtLaxExt}, naturality of $m^\circ$ is equivalent to
\[
\PV m_X\cdot(m_{TX}\cdot\tau_{TTX})^\dashv\cdot(\keT r)^\tau=(m_{X}\cdot\tau_{TX})^\dashv\cdot r^\tau\cdot m_Y.
\]
Since
\[
\PV m_X\cdot(m_{TX}\cdot\tau_{TTX})^\dashv\le(m_{X}\cdot\tau_{TX})^\dashv\cdot m_X,
\]
oplaxness follows from
\begin{align*}
m_{X}\cdot(\keT r)^\tau&=m_{X}\cdot m_{TX}\cdot T(\tau_{TX}\cdot(m_X\cdot\tau_{TX})^\dashv\cdot r^\tau)\\
&=m_{X}\cdot T(m_X\cdot\tau_{TX}\cdot(m_X\cdot\tau_{TX})^\dashv\cdot r^\tau)\\
&=m_{X}\cdot T(m_X\cdot T(\tau_X\cdot r^\flat))=r^\tau\cdot m_Y.
\end{align*}
\end{proof}

\subsection{Discrete presheaf monads of $\V$-power-enriched monads}\label{ssec:DiscretePresheafOfVEnriched}
Let $(\mT,\tau)$ be a $\V$-power-enriched monad. The monotone maps
\begin{align*}
\nbhd&=\nbhd_{X,Y}:\Rels{\V}(TX,Y)\to\Set(Y,TX),\ r\mapsto(m_X\cdot\tau_{TX})\cdot r^\flat\\
\conv&=\conv_{Y,X}:\Set(Y,TX)\to\Rels{\V}(TX,Y),\ f\mapsto ((m_X\cdot\tau_{TX})^\dashv\cdot f)^\circ\cdot\ev_{TX}
\end{align*}
form an adjunction $\nbhd\dashv\conv$ for all sets $X,Y$, such that moreover $\nbhd\cdot\conv=1_{\Set(Y,TX})$. With respect to the Kleisli extension of $\mT$, one observes that
\[
((m_X\cdot\tau_{TX})^\dashv)^\circ\cdot\ev_{TX}=\keT 1_{X},
\]
so $\conv(f)=f^\circ\cdot\keT 1_{X}$, and $\conv(f)$ is a unitary $\kTV$-relation.

\begin{lem}\label{lem:UnitaryTV-Rel}
Let $(\mT,\tau)$ be a $\V$-power-enriched monad equipped with its Kleisli extension $\keT$ to $\Rels{\V}$. If $r:TX\relto Y$ is a unitary $\kTV$-relation, then
\[
\conv\cdot\nbhd(r)=r.
\]
\end{lem}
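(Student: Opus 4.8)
The plan is to establish the two inequalities $r\le\conv\nbhd(r)$ and $\conv\nbhd(r)\le r$ separately; the first is immediate and the second carries all the content. Since $\nbhd\dashv\conv$ (recorded just before the statement), the unit of this adjunction gives $1\le\conv\cdot\nbhd$, so that $r\le\conv\nbhd(r)$ holds for every $\kTV$-relation $r:TX\relto Y$, unitary or not. As $\nbhd\cdot\conv=1$, the composite $\conv\cdot\nbhd$ is a closure operator whose fixed points are exactly the relations in the image of $\conv$; since every $\conv(f)$ is unitary, proving the lemma amounts to showing that, conversely, each unitary $r$ is fixed by this closure, i.e. $\conv\nbhd(r)\le r$.

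For the reverse inequality I would first make $\conv\nbhd(r)$ explicit. Writing $g:=m_X\cdot\tau_{TX}:\PV TX\to TX$ for the free-algebra structure and $g^\dashv$ for its right adjoint retract (so $g\cdot g^\dashv=1_{TX}$, and $\keT 1_X=(g^\dashv)^\circ\cdot\ev_{TX}$ by~\ref{ssec:DiscretePresheafOfVEnriched}), we have $\nbhd(r)=g\cdot r^\flat$, and with $\conv(f)=f^\circ\cdot\keT 1_X$ together with compatibility of whiskering with involution, $\conv\nbhd(r)=\nbhd(r)^\circ\cdot\keT 1_X$. Using $\keT 1_X(\fx,\fy)=(\fy\Vhoml\fx)=g^\dashv(\fy)(\fx)$ (see~\ref{ssec:Free}), an evaluation gives
\[
\conv\nbhd(r)(\fx,y)=\big(g^\dashv\,g\,(r(-,y))\big)(\fx)
\]
for all $\fx\in TX$ and $y\in Y$. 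Thus $\conv\nbhd(r)\le r$ is equivalent to $g^\dashv g\,(r(-,y))\le r(-,y)$ for every $y$, that is, to each section $r(-,y)\in\PV TX$ being a fixed point of the closure $g^\dashv g$, equivalently lying in the image of $g^\dashv$.

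The two unitary conditions now play very different roles. The first condition $r\cdot\keT 1_X=r$ says precisely that each section $r(-,y)$ is a $\V$-module on the $\V$-category $(TX,\keT 1_X)$, i.e. a presheaf; but a presheaf need not be representable, and the image of $g^\dashv$ is (in general strictly) smaller than the presheaf hull, so this condition alone yields only $g^\dashv g\,(r(-,y))\ge r(-,y)$ and does not suffice for the reverse inequality. The genuinely $\kTV$-theoretic second condition $e_Y^\circ\cdot\keT r\cdot m_X^\circ=r$ is what forces the sections to be representable. Concretely, I would prove the operator inequality $\conv\nbhd(r)\le e_Y^\circ\cdot\keT r\cdot m_X^\circ$ for arbitrary $r$ and then substitute the second unitary condition; combined with the automatic inequality $r\le e_Y^\circ\cdot\keT r\cdot m_X^\circ$ (from~\ref{LaxExt}\eqref{cond:LaxExt2} and~\eqref{cond:LaxExt3}), this forces $\conv\nbhd(r)=r$ for unitary $r$.

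The main obstacle is exactly this operator inequality $\conv\nbhd(r)\le e_Y^\circ\cdot\keT r\cdot m_X^\circ$. Establishing it requires relating the closure $g^\dashv g$ on $\PV TX$ to the extended relation $\keT r=(r^\tau)^\ast$ and to the multiplication $m$, and I expect it to rest on the compatibility between $g^\dashv$ and $m$ already isolated in the proof of Proposition~\ref{prop_Kleisli ext}, namely $\PV m_X\cdot(m_{TX}\cdot\tau_{TTX})^\dashv\le(m_X\cdot\tau_{TX})^\dashv\cdot m_X$. Unwinding $r^\tau$ and the module embedding $(-)^\ast$ through the adjunction $g\dashv g^\dashv$ is the computational heart of the argument.
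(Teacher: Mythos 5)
Your reduction of the problem to showing that each section $r(-,y)$ is fixed by the closure $(m_X\cdot\tau_{TX})^\dashv\cdot(m_X\cdot\tau_{TX})$, and the observation that $r\le\conv\nbhd(r)$ is automatic, both match the paper. The gap is in the step you yourself identify as carrying all the content: the operator inequality $\conv\nbhd(r)\le e_Y^\circ\cdot\keT r\cdot m_X^\circ$ is \emph{false} for arbitrary $r$, and the strategy of invoking only the second unitary condition cannot be repaired, because that condition alone does not imply the conclusion of the lemma. Concretely, take $\V=\two$ and $\mT=\mP$ with $\tau$ the identity, so that $TX=\PV X$, $m_X\cdot\tau_{TX}=\bigcup$, the induced order on $PX$ is inclusion, and $\keP 1_X(\fx,\fx')=[\fx\subseteq\fx']$. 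Let $X=\{1,2\}$, $Y=1$, and let $\phi:PX\relto 1$ be the characteristic function of $\{\emptyset,\{1,2\}\}\subseteq PX$. Then $e_1^\circ\cdot\keP\phi\cdot m_X^\circ(\fx,\star)=[\exists\fX\subseteq\phi:\bigcup\fX=\fx]=\phi(\fx,\star)$, so $\phi$ satisfies the second unitary condition; but $\nbhd(\phi)=\bigcup\phi=\{1,2\}$ and $\conv\nbhd(\phi)(\fx,\star)=[\fx\subseteq\{1,2\}]$, which equals $1$ at $\fx=\{1\}$ where $\phi$ is $0$. Thus both your key inequality and your intended conclusion fail for this $\phi$; what it lacks is precisely the first unitary condition $\phi\cdot\keP 1_X=\phi$ (down-closedness of the sections), which you dismissed as contributing nothing to the direction $\conv\nbhd(r)\le r$.

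What does hold for arbitrary $r$ --- and what the paper's proof establishes by a chain of inequalities on $r^\flat$ in $\Set_\mPV$ starting from the unit $r^\flat\le(m_X\cdot\tau_{TX})^\dashv\cdot(m_X\cdot\tau_{TX})\cdot r^\flat$ --- is the weaker bound $\conv\nbhd(r)\le e_Y^\circ\cdot\keT r\cdot m_X^\circ\cdot\keT 1_X=(1_Y^\sharp\kleisli r)\kleisli 1_X^\sharp$, with an extra factor $\keT 1_X$ on the right; collapsing the right-hand side to $r$ then uses \emph{both} unitary conditions. Equivalently, your inequality becomes correct once $r$ is assumed to satisfy $r\cdot\keT 1_X=r$, since associativity of the Kleisli convolution absorbs the extra factor; so the repair is to keep the first unitary condition in play throughout, rather than to seek an unconditional inequality that only the second condition will close.
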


\begin{proof}
Since $r$ is unitary,
\begin{align*}
r^\flat&\le(m_X\cdot\tau_{TX})^\dashv\cdot (m_X\cdot\tau_{TX})\cdot r^\flat\\
&=\mu_{TX}\cdot\PV((m_X\cdot\tau_{TX})^\dashv\cdot m_X)\cdot\eta_{TX}\cdot\tau_{TX}\cdot r^\flat\\
&\le\mu_{TX}\cdot\PV((m_X\cdot\tau_{TX})^\dashv\cdot m_X)\cdot(m_X\cdot\tau_{TX})^\dashv\cdot\tau_{TX}\cdot r^\flat\\
&=\mu_{TX}\cdot\PV((m_X\cdot\tau_{TX})^\dashv)\cdot(e_Y^\circ\cdot\keT r\cdot m_X^\circ)^\flat\\
&=(e_Y^\circ\cdot\keT r\cdot m_X^\circ\cdot\keT 1_X)^\flat=r^\flat.
\end{align*}
This implies $r^\flat=(m_X\cdot\tau_{TX})^\dashv\cdot (m_X\cdot\tau_{TX})\cdot r^\flat$, and thus $\conv\cdot\nbhd(r)=r$, as claimed.
\end{proof}

\begin{thm}\label{thm:ConvAndNbhd}
Let $(\mT,\tau)$ be a $\V$-power-enriched monad. The Kleisli extension $\kemT$ to $\Rels{\V}$ of $\mT$ is associative, and the maps $\nbhd$ and $\conv$ yield mutually inverse monotone functors
\[
\nbhd:\URels{\kTV}^\op\to\Set_\mT\qquad\text{and}\qquad\conv:\Set_\mT\to\URels{\kTV}^\op
\]
that commute with the left adjoint functors and $(-)^\sharp:\Set\to\URels{\kTV}^\op$ and $F_\mT:\Set\to\Set_\mT$.
\end{thm}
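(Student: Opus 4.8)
The plan is to prove the two assertions in turn, treating the associativity of $\keT$ as the genuine obstacle and everything else as bookkeeping built on the hom-set correspondence already assembled in \ref{ssec:DiscretePresheafOfVEnriched}. At the level of hom-sets the work is essentially done: the identity $\nbhd\cdot\conv=1$ recorded in \ref{ssec:DiscretePresheafOfVEnriched}, together with Lemma~\ref{lem:UnitaryTV-Rel} (which gives $\conv\cdot\nbhd=1$ on unitary relations) and the fact that $\conv(f)=f^\circ\cdot\keT 1_X$ is always unitary, shows that $\nbhd$ and $\conv$ restrict to mutually inverse monotone bijections between $\URels{\kTV}(X,Y)$ and $\Set_\mT(Y,X)=\Set(Y,TX)$. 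Monotonicity is automatic from their construction in \ref{ssec:DiscretePresheafOfVEnriched}, so it remains to establish (I) that $\keT$ is associative --- which is what legitimizes $\URels{\kTV}$ as a category --- and (II) that these bijections respect identities and composition and are compatible with $(-)^\sharp$ and $F_\mT$.

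For (I), recall from Proposition~\ref{prop_Kleisli ext} that $\keT$ already preserves composition, so by Proposition~\ref{prop:associative} it suffices to upgrade the oplax inequality $\keT\keT r\cdot m_X^\circ\le m_Y^\circ\cdot\keT r$ of Proposition~\ref{prop_Kleisli ext} to an equality, i.e.\ to show that $m^\circ:\keT\to\keT\keT$ is natural. I would transport this across the order-isomorphism $\Rels{\V}^\op\cong\Set_\mPV$ of \ref{ssec:KleisliPV} exactly as in the proof of Proposition~\ref{prop_Kleisli ext}, reducing naturality at $r:X\relto Y$ to the identity
\[
\PV m_X\cdot(m_{TX}\cdot\tau_{TTX})^\dashv\cdot(\keT r)^\tau=(m_X\cdot\tau_{TX})^\dashv\cdot r^\tau\cdot m_Y .
\]
The inequality ``$\le$'' is precisely what Proposition~\ref{prop_Kleisli ext} supplies, via $\PV m_X\cdot(m_{TX}\cdot\tau_{TTX})^\dashv\le(m_X\cdot\tau_{TX})^\dashv\cdot m_X$ together with the equality $m_X\cdot(\keT r)^\tau=r^\tau\cdot m_Y$. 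The hard part will be the reverse inequality: here I expect the $\V$-power-enrichment hypothesis (the monotonicity of $L=m\cdot T(-)$) to be used in an essential way, in concert with the retraction identity $(m_X\cdot\tau_{TX})\cdot(m_X\cdot\tau_{TX})^\dashv=1$ and the fact that $m_X$ is a $\mPV$-algebra morphism (Proposition~\ref{prop:PMndEquiv}), so that $(m_X\cdot\tau_{TX})\cdot\PV m_X=m_X\cdot(m_{TX}\cdot\tau_{TTX})$. The point to exploit is that $(\keT r)^\tau$ is built from the section $\tau_{TX}\cdot(m_X\cdot\tau_{TX})^\dashv$ of $m_X$, so that the two sides agree after applying $m_X\cdot\tau_{TX}$ and the reverse inequality is only needed on the image of $(\keT r)^\tau$; this is where the enrichment should force the largest-element characterization of $(m_X\cdot\tau_{TX})^\dashv$ to be attained.

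Granting (I), $\URels{\kTV}$ is a genuine category and Kleisli convolution is associative. For (II) it then suffices to check that $\conv$ is a functor, since $\nbhd$ is its inverse on hom-sets. Preservation of identities is immediate: $\conv(e_X)=e_X^\circ\cdot\keT 1_X=1_X^\sharp$. For composition I would expand, for $\alpha:X\to TY$ and $\beta:Y\to TZ$ in $\Set_\mT$, both sides of the functoriality condition $\conv(m_Z\cdot T\beta\cdot\alpha)=\conv(\alpha)\kleisli\conv(\beta)$, obtaining
\[
\conv(m_Z\cdot T\beta\cdot\alpha)=\alpha^\circ\cdot(T\beta)^\circ\cdot m_Z^\circ\cdot\keT 1_Z
\dand
\conv(\alpha)\kleisli\conv(\beta)=\alpha^\circ\cdot\keT(\beta^\circ\cdot\keT 1_Z)\cdot m_Z^\circ
\]
by using $\conv(f)=f^\circ\cdot\keT 1$, whiskering of opposite relations by maps, and the composition-preservation of $\keT$. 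Rewriting $\keT(\beta^\circ\cdot\keT 1_Z)=(T\beta)^\circ\cdot\keT\keT 1_Z$ via \ref{LaxExt}\eqref{cond:LaxExt4} reduces the equality of the two expressions exactly to the naturality of $m^\circ$ at $1_Z$ established in (I). Thus $\conv$ is a functor, and $\nbhd=\conv^{-1}$ is the inverse functor.

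Finally, compatibility with the left adjoints is a short computation. Using $\conv(f)=f^\circ\cdot\keT 1$, the naturality of $e$ (so $f^\circ\cdot e_Y^\circ=e_X^\circ\cdot(Tf)^\circ$), and \ref{LaxExt}\eqref{cond:LaxExt4} in the form $\keT(f^\circ)=(Tf)^\circ\cdot\keT 1_Y$, I would verify
\[
\conv(F_\mT f)=\conv(e_Y\cdot f)=f^\circ\cdot e_Y^\circ\cdot\keT 1_Y=e_X^\circ\cdot\keT(f^\circ)=f^\sharp
\]
for every map $f:X\to Y$, i.e.\ $\conv\cdot F_\mT=(-)^\sharp$, and hence $\nbhd\cdot(-)^\sharp=F_\mT$. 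This exhibits $\nbhd$ and $\conv$ as mutually inverse monotone functors commuting with the canonical functors out of $\Set$, completing the proof. The single real difficulty is the reverse inequality in (I); all remaining steps are formal manipulations with the identities collected in Sections~\ref{sec:Homs} and~\ref{sec:Kleisli}.
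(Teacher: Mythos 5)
Your architecture is reasonable and parts of it are correct (the hom-set bijections, the reduction of functoriality of $\conv$ to naturality of $m^\circ$ at $1_Z$, and the computation $\conv\cdot F_\mT=(-)^\sharp$), but step (I) --- which you yourself flag as the one real difficulty --- is not actually proved, and everything else in your plan leans on it. You need the reverse inequality $m_Y^\circ\cdot\keT r\le\keT\keT r\cdot m_X^\circ$, equivalently
\[
(m_X\cdot\tau_{TX})^\dashv\cdot r^\tau\cdot m_Y\ \le\ \PV m_X\cdot(m_{TX}\cdot\tau_{TTX})^\dashv\cdot(\keT r)^\tau,
\]
and the heuristic you give does not deliver it: writing $a=m_X\cdot\tau_{TX}$, the observation that both sides agree after post-composing with $a$, combined with $1\le a^\dashv\cdot a$ and $a\cdot a^\dashv=1$, only recovers the inequality in the \emph{other} direction (the oplax one already established in Proposition~\ref{prop_Kleisli ext}), because $a^\dashv$ is merely a right adjoint section, not an inverse. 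Nothing in your sketch explains why the supremum defining $a^\dashv$ is ``attained'' on the image of $(\keT r)^\tau$; the $\V$-power-enrichment hypothesis is invoked as a hope rather than used. Since your proof that $\conv$ preserves composition is reduced precisely to naturality of $m^\circ$ at $1_Z$, the gap propagates to (II) as well.

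The paper closes exactly this gap by inverting your order of argument, and your proof can be repaired the same way. Rather than proving naturality of $m^\circ$ first, one verifies \emph{directly} that $\nbhd$ sends Kleisli convolution to Kleisli composition, $\nbhd(s\kleisli r)=\nbhd(r)\kleisli\nbhd(s)$, and $1_X^\sharp$ to $e_X$; this is a computation using only that $\tau$ is a monad morphism and that $m$ is the multiplication of $\mT$, and it presupposes neither naturality of $m^\circ$ nor that $\URels{\kTV}$ is already a category. Since $\nbhd$ and $\conv$ are mutually inverse monotone bijections on hom-sets (Lemma~\ref{lem:UnitaryTV-Rel} and the discussion in~\ref{ssec:DiscretePresheafOfVEnriched}), associativity of the Kleisli convolution on unitary $\kTV$-relations is then inherited from associativity of Kleisli composition in $\Set_\mT$, and Proposition~\ref{prop:associative} (via the equivalence, recalled in the remark of Subsection~\ref{LaxExt}, between associativity of the extension and associativity of the Kleisli convolution on unitary relations) converts this into associativity of $\kemT$ --- in particular into the naturality of $m^\circ$ you were after, which then also yields functoriality of $\conv$ for free. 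With that reordering, the remaining steps of your write-up go through as stated.
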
 

\begin{proof}
By Lemma~\ref{lem:UnitaryTV-Rel} and the preceding discussion, the maps $\nbhd$ and $\conv$ yield order-isomorphisms between the set of all unitary $\V$-relations $r:TX\relto Y$ and $\Set(Y,TX)$ for all sets $X,Y$. To see that these determine functors, we first need to verify the identities
\begin{align*}
\nbhd(s\kleisli r)&=\nbhd(r)\kleisli\nbhd(s),&\conv(f)\kleisli\conv(g)&=\conv(g\kleisli f)\\
\nbhd(1_X^\sharp)&=e_X,&\conv(e_X)&=1_X^\sharp
\end{align*}
for all unitary $\V$-relations $r,s:TX\relto Y$, and maps $f,g:Y\to TX$. The fact that $\nbhd$ preserves composition can be verified by using that $\tau$ is a monad morphism, and that $m$ is the multiplication of $\mT$. Preservation of composition by $\conv$ then follows because $\nbhd$ and $\conv$ are mutual inverses. Preservation of units is immediately verified by using that $\conv(f)=f^\circ\cdot\keT 1_{TX}$. Since Kleisli composition is associative, so is Kleisli convolution of unitary $(\mT,\V)$-relations, and therefore the Kleisli extension $\keT$ is an associative lax extension by Proposition~\ref{prop:associative}. Hence, one can indeed form the category $\URels{\TV}$.

Commutativity of $\nbhd$ with $(-)^\sharp$ and $F_\mT$, follows from the fact that for any map $f:X\to Y$,
\begin{align*}
m_X\cdot\tau_{TX}\cdot(f^\sharp)^\flat&=m_X\cdot\tau_{TX}\cdot(e_Y^\circ\cdot\keT(f^\circ))^\flat\\
&=m_X\cdot\tau_{TX}\cdot\mu_Y\cdot\PV((\keT(f^\circ))^\flat)\cdot(e_X^\circ)^\flat\\
&=m_X\cdot\tau_{TX}\cdot(\keT(f^\circ))^\flat\cdot e_X\\
&=m_X\cdot\tau_{TX}\cdot(m_Y\cdot\tau_{TY})^\dashv\cdot(f^\circ)^\tau\cdot e_X\\
&=\tau_Y\cdot(f^\circ)^\flat=e_Y\cdot f,
\end{align*}
by the definitions and the fact that $(g^\circ)^\flat=\eta_Y\cdot g$ for any map $g:X\to Y$ (here, we continue to use the notation $\mPV=(\PV,\mu,\eta)$). Commutativity of $\conv$ with $(-)^\sharp$ and $F_\mT$ is then immediate because $\conv$ is inverse to $\nbhd$.  
\end{proof}

\begin{cor}\label{cor:ConvAndNbhd}
Let $(\mT,\tau)$ be a $\V$-power-enriched monad equipped with its Kleisli extension $\kemT$. The discrete presheaf monad associate to $\kemT$ is order-isomorphic to $\mT$:
\[
\mPTV(\mT,\kemT)\cong\mT.
\]
\end{cor}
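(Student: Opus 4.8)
The plan is to deduce the corollary directly from Theorem~\ref{thm:ConvAndNbhd} by tracing through how the discrete presheaf monad is constructed. Recall from Subsection~\ref{ssec:DiscretePresheafMonad} that $\mPTV(\mT,\kemT)$ is the monad induced on $\Set$ by the adjunction $(-)^\sharp\dashv\URels{\kTV}(-,1)$ of Proposition~\ref{prop:URelSetAdjun}. On the other hand, $\mT$ is the monad induced by the Eilenberg--Moore adjunction $F_\mT\dashv G_\mT:\Set_\mT\to\Set$. Since isomorphic adjunctions between the same pair of categories induce isomorphic monads, it suffices to exhibit an isomorphism of adjunctions: an equivalence of the ``top'' categories that commutes with both the left adjoints and the right adjoints on the nose.

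First I would invoke Theorem~\ref{thm:ConvAndNbhd}, which already supplies exactly the required comparison. It gives mutually inverse monotone functors
\[
\nbhd:\URels{\kTV}^\op\to\Set_\mT\dand\conv:\Set_\mT\to\URels{\kTV}^\op
\]
that commute with the two left adjoints $(-)^\sharp:\Set\to\URels{\kTV}^\op$ and $F_\mT:\Set\to\Set_\mT$, that is, $\nbhd\cdot(-)^\sharp=F_\mT$ and $\conv\cdot F_\mT=(-)^\sharp$. Because $\nbhd$ and $\conv$ are inverse isomorphisms of categories, this isomorphism automatically intertwines the right adjoints as well: the right adjoint of $(-)^\sharp$ is $\URels{\kTV}(-,1)$ and the right adjoint of $F_\mT$ is $G_\mT$, and uniqueness of adjoints forces $G_\mT\cdot\nbhd=\URels{\kTV}(-,1)$ up to the canonical natural isomorphism. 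Thus the adjunction $(-)^\sharp\dashv\URels{\kTV}(-,1)$ is isomorphic, as an adjunction over $\Set$, to the Eilenberg--Moore adjunction $F_\mT\dashv G_\mT$.

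Second I would conclude the monad isomorphism. The functor part of $\mPTV(\mT,\kemT)$ is $\URels{\kTV}(-,1)\cdot(-)^\sharp$, while that of $\mT$ is $G_\mT\cdot F_\mT=T$; under the isomorphism $\nbhd$ these composites agree, so $\PTV\cong T$. One then checks that this natural isomorphism $\PTV\cong T$ carries the unit $\yoneda$ to $e$ and the multiplication $\yonmult$ to $m$: the units match because $\nbhd$ sends $1_X^\sharp$ to $e_X$ (which is precisely the computation $m_X\cdot\tau_{TX}\cdot(1_X^\sharp)^\flat = e_X$ appearing in the proof of Theorem~\ref{thm:ConvAndNbhd}, specialized to $f=1_X$), and the multiplications match because monad multiplications induced by isomorphic adjunctions correspond under the comparison. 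This yields the desired monad isomorphism $\mPTV(\mT,\kemT)\cong\mT$.

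The main obstacle, and the only genuinely non-formal point, is establishing that the comparison intertwines the \emph{right} adjoints and hence the full monad structure, not merely the underlying functors. The commutation with left adjoints is handed to us by Theorem~\ref{thm:ConvAndNbhd}, but one must argue carefully that an isomorphism of categories commuting with left adjoints induces an isomorphism of the induced monads; this is a standard fact, but it requires being precise about which natural transformations are being compared and verifying that the canonical iso $G_\mT\cdot\nbhd\cong\URels{\kTV}(-,1)$ is compatible with the respective evaluation/Yoneda data. Once the adjunction isomorphism is pinned down, the passage to monads is purely formal.
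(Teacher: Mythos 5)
Your proof is correct and follows essentially the same route as the paper's, which simply observes that the isomorphism of Theorem~\ref{thm:ConvAndNbhd} commutes with the left adjoint functors from $\Set$ and hence the induced monads are isomorphic (with corresponding orders); you merely spell out the standard uniqueness-of-adjoints argument that the paper leaves implicit. One minor slip: the adjunction $F_\mT\dashv G_\mT:\Set_\mT\to\Set$ is the Kleisli adjunction, not the Eilenberg--Moore adjunction, though this does not affect the argument since both induce $\mT$.
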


\begin{proof}
By Theorem~\ref{thm:ConvAndNbhd}, the isomorphism $\Rels{(\mT,\V)}^\op\cong\Set_\mT$ commutes with the left adjoint functors from $\Set$, so the induced monads are isomorphic, with corresponding orders on the sets $\PTV X$ and $TX$.
\end{proof}

\subsection{Discrete presheaf monads are $\V$-power-enriched}
For an associative lax extension $\emT$ to $\Rels{\V}$ of a monad $\mT$ on $\Set$, there is a functor 
\[
(-)_\sharp:\Rels{V}\to\URels{\TV}
\]
that sends a $\V$-relation $r:X\relto Y$ to the unitary $\TV$-relation
\[
r_\sharp:=e_Y^\circ\cdot\eT r:X\krelto Y.
\]
%Recall that an associative lax extension $\emT$ to $\Rels{\V}$ of a monad $\mT$ on $\Set$ yields a corresponding discrete presheaf monad $\mPi(\emT)$.

\begin{prop}\label{prop:NatTransfIntoPi}
Consider the discrete presheaf monad $\mPTV$ on $\Set$ associated to an associative lax extension $\emT$ to $\Rels{\V}$ of a monad $\mT$ on $\Set$. There is a functor $\Rels{\V}^\op\to\Set_\mPTV$ and an isomorphism $Q:\Set_\mPTV\to\URels{\TV}^\op$ that make the following diagram commute
\[
\xymatrix@C=1em{&\ \ \Rels{\V}^\op\ar[dl]\ar[dr]^-{(-)_\sharp}&\\
\Set_\mPTV\ar[rr]^-{Q}&&\URels{\TV}^\op}
\]
under $\Set$. In particular, there is a monad morphism $\pi:\mPV\to\mPTV$ whose component at a set $X$ sends $\varphi\in\Rels{V}(X,1)$ to the unitary $\TV$-relation $e_1^\circ\cdot\eT\varphi:X\krelto 1$.
\end{prop}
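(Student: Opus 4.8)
The plan is to recognise $Q$ as the Kleisli comparison functor of the adjunction $(-)^\sharp\dashv\URels{\TV}(-,1)$ from Proposition~\ref{prop:URelSetAdjun}, to build the left-hand functor of the triangle as $Q^{-1}$ precomposed with (the opposite of) $(-)_\sharp$, and to read off $\pi$ from the extension/monad-morphism correspondence of Proposition~\ref{prop:PMndEquiv}. The structural engine is the functoriality $(s\cdot r)_\sharp=s_\sharp\kleisli r_\sharp$ of $(-)_\sharp$, which uses that $\eT$ preserves composition together with the naturality of $m^\circ$ --- that is, the associativity of $\emT$ (Proposition~\ref{prop:associative}); this has been recorded just above and I take it as given.

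First I would construct $Q$ in parallel with the treatment of $\mPV$ in~\ref{ssec:KleisliPV}. Since $\mPTV$ is induced by $(-)^\sharp\dashv\URels{\TV}(-,1)$ and $(-)^\sharp$ is the identity on objects, the canonical comparison $Q:\Set_\mPTV\to\URels{\TV}^\op$ is itself the identity on objects and sends a Kleisli arrow $f:X\to\PTV Y=\URels{\TV}(Y,1)$ to its transpose, a unitary $\TV$-relation $Y\krelto X$. The adjunction bijection $\URels{\TV}^\op(X,Y)\cong\Set(X,\PTV Y)$ is exactly the action of $Q$ on hom-sets, so $Q$ is fully faithful; being identity on objects it is bijective on objects; and the pointwise orders on the two sides correspond. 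Hence $Q$ is a monotone isomorphism with $Q\cdot F_\mPTV=(-)^\sharp$.

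Next I would set the left-hand functor to be $Q^{-1}\cdot(-)_\sharp^\op:\Rels{\V}^\op\to\Set_\mPTV$, so that the triangle commutes by construction. That it lies under $\Set$ amounts to $(-)_\sharp^\op\cdot(-)^\circ=(-)^\sharp$, which follows from the identity $(f^\circ)_\sharp=e_X^\circ\cdot\eT(f^\circ)=f^\sharp$ read off the definitions; together with $Q\cdot F_\mPTV=(-)^\sharp$ this gives $(Q^{-1}\cdot(-)_\sharp^\op)\cdot(-)^\circ=F_\mPTV$, so the left-hand functor is an extension of $F_\mPTV$ along $(-)^\circ$. Proposition~\ref{prop:PMndEquiv} (equivalence of (i) and (ii), with $\mT=\mPTV$) then turns this extension into a monad morphism $\pi:\mPV\to\mPTV$. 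Its component is recovered by evaluating the extension at the generic Kleisli identity of $X$, i.e. at $\ev_X$ under $\Set_\mPV\cong\Rels{\V}^\op$; using $\PV X=\Rels{\V}(X,1)$ (Example~\ref{ex:V-PowersetAsPresheaf}) and $\PTV X=\URels{\TV}(X,1)$, this identifies $\pi_X$ with the restriction of $(-)_\sharp$ to $\Rels{\V}(X,1)$, namely $\varphi\mapsto\varphi_\sharp=e_1^\circ\cdot\eT\varphi$, as claimed.

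The main obstacle I expect is bookkeeping rather than a conceptual difficulty: one must keep the variances straight, since the covariant $(-)_\sharp$ becomes contravariant after passing to opposites, and one must check that the comparison isomorphisms $Q$ and $K$ both respect the pointwise orders so that all four functors are monotone. The one computation with genuine content is the explicit form of $\pi_X$, obtained by tracing the identity Kleisli arrow of $X$ successively through $K$, through $(-)_\sharp$, and back through $Q^{-1}$; what collapses this chain to $\varphi\mapsto e_1^\circ\cdot\eT\varphi$ is that all three comparison functors are identity on objects and act on hom-sets by the mate operation, which at the singleton $1$ is mere evaluation.
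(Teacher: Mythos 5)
Your proposal is correct and follows essentially the same route as the paper: $Q$ is the Kleisli comparison functor of the adjunction from Proposition~\ref{prop:URelSetAdjun} (bijective on objects since $(-)^\sharp$ is, with inverse given by the mate operation), the left-hand functor is $Q^{-1}$ composed with $(-)_\sharp^\op$, and $\pi$ comes from the correspondence between monad morphisms and Kleisli-category functors over $\Set$. Your explicit check that the triangle lies under $\Set$ via $(f^\circ)_\sharp=f^\sharp$, and your tracing of $\pi_X$ through the comparison functors, merely spell out details the paper leaves implicit.
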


\begin{proof}
Since the monad $\PTV$ comes from the adjunction of $\URels{\TV}^\op$ over $\Set$, the functor $Q:\Set_\mPTV\to\URels{(\mT,\V)}^\op$ is the fully faithful comparison functor from the Kleisli category. This functor is bijective on objects because the left adjoint $(-)^\sharp:\Set\to\URels{(\mT,\V)}^\op$ is so. Hence, $QX=X$ for each set $X$, and $Q$ sends a morphism $f:X\to \PTV Y$ in $\Set_\mPTV$ to the unitary $\TV$-relation $f^\sharp\kleisli\epsilon_Y:Y\krelto X$. The inverse of $Q$ sends a unitary $\TV$-relation $\varphi:Y\krelto X$ to $\mate{\varphi}:X\to\PTV Y$ (see~\ref{ssec:flatten}).

The functor $\Rels{\V}^\op\to\Set_\mPTV$ is the composite of $(-)_\sharp$ with the inverse of $Q$. Explicitly, a $\V$-relation $r:X\relto Y$ is sent to the Kleisli morphism $(e_Y^\circ\cdot\eT r)^\flat:Y\to\PTV Y$. The last statement follows from the one-to-one correspondence between monad morphisms and functors between Kleisli categories.
\end{proof}

\begin{thm}\label{thm:KleisliOfPi}
Let $\emT$ be an associative lax extension to $\Rels{\V}$ of a monad $\mT$ on $\Set$. The monad morphism $\pi:\mPV\to\mPTV$ makes $\mPTV$ into a $\V$-power-enriched monad, and the pointwise order induced on the hom-sets makes $Q:\Set_\mPTV\to\URels{(\mT,\V)}^\op$ into an order-isomorphism
\[
\Set_\mPTV\cong\URels{(\mT,\V)}^\op
\]
(see Proposition~\ref{prop:NatTransfIntoPi}).
\end{thm}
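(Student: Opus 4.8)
The plan is to pin down a single order --- the pointwise one --- and show both that it is what $\pi$ induces and that $Q$ respects it. I would first make the order-transport along $Q:\Set_\mPTV\to\URels{\TV}^\op$ of Proposition~\ref{prop:NatTransfIntoPi} explicit. For a Kleisli morphism $f:X\to\PTV Y$, that proposition gives $Q(f)=f^\sharp\kleisli\epsilon_Y$, and the identity $f^\sharp\kleisli\varphi=f^\circ\cdot\varphi$ together with $\epsilon_Y(\fy,\Psi)=\Psi(\fy,\star)$ (Proposition~\ref{prop:URelSetAdjun}) yields
\[
Q(f)(\fy,x)=(f^\circ\cdot\epsilon_Y)(\fy,x)=\epsilon_Y(\fy,f(x))=f(x)(\fy,\star)
\]
for all $\fy\in TY$ and $x\in X$. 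Hence $Q(f)\le Q(g)$ in the pointwise order of $\Rels{\V}(TY,X)$ if and only if $f(x)\le g(x)$ in $\PTV Y=\URels{\TV}(Y,1)$ for every $x\in X$, i.e. if and only if $f\le g$ in the pointwise order on $\Set(X,\PTV Y)$ induced by the pointwise order of the hom-object $\PTV Y$. Thus $Q$ is an order-isomorphism for the pointwise orders, and it remains only to see that this pointwise order is the one prescribed by the $\V$-power-enriched structure.

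Next I would identify the order that $\pi$ induces on $\PTV Y$. By Proposition~\ref{prop:PMndEquiv}, $\pi$ equips $\PTV Y$ with the $\mPV$-algebra structure $a=\yonmult_Y\cdot\pi_{\PTV Y}$, hence with the $\V$-category structure $K(a^\dashv)$ of~\ref{ssec:Free}. Here I would record a general fact valid for any $\mPV$-algebra $(Z,a)$: combining the adjunction $z\ast v\le z'\iff v\le K(a^\dashv)(z,z')$ of~\ref{ssec:Free} with the action axiom $z\ast k=z$ shows, on taking $v=k$, that the induced order of~\ref{ssec:SmallV-Cats} coincides with the underlying complete-lattice order of $Z$. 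It then remains to check that the complete lattice underlying the $\mPV$-algebra $\PTV Y$ is exactly the hom-object $\URels{\TV}(Y,1)$, whose infima are formed pointwise (as in the discussion of ordered $(\mT,\V)$-relations); granting this, the induced order is the pointwise one and matches the order of the first step. This identification of the algebra structure $\yonmult_Y\cdot\pi_{\PTV Y}$ with the hom-object lattice --- equivalently, that the $\V$-category $K(a^\dashv)$ is the internal hom of $\URels{\TV}(Y,1)$ --- is the technical heart of the proof and the step I expect to require the most care; I would carry it out by unwinding $\pi_Y$ as the restriction of $(-)_\sharp$ and using that $\pi_Y$ is a $\mPV$-algebra morphism out of the pointwise-ordered free algebra $\PV Y=\Rels{\V}(Y,1)$.

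Finally, with the order identified as pointwise, I would verify $\V$-power-enrichedness and conclude. For the lifting $L=\yonmult\cdot\PTV(-)$ and $\psi\in\PTV X$, associativity of the Kleisli convolution (Proposition~\ref{prop:associative}) gives
\[
Lf(\psi)=\yonmult_Y(\PTV f(\psi))=(\psi\kleisli f^\sharp)\kleisli\epsilon_Y=\psi\kleisli Q(f).
\]
Since the Kleisli convolution preserves the pointwise order in each variable (discussion of ordered $(\mT,\V)$-relations), $f\le g$ implies $Q(f)\le Q(g)$ by the first step, hence $\psi\kleisli Q(f)\le\psi\kleisli Q(g)$ for all $\psi$, that is $Lf\le Lg$. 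Together with the monad morphism $\pi:\mPV\to\mPTV$ of Proposition~\ref{prop:NatTransfIntoPi}, this shows that $(\mPTV,\pi)$ is $\V$-power-enriched. By the first two steps the induced order on the hom-sets of $\Set_\mPTV$ is precisely the pointwise order, for which $Q$ was shown to be an order-isomorphism; this gives $\Set_\mPTV\cong\URels{(\mT,\V)}^\op$ as ordered categories.
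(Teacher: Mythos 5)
Your overall architecture is the same as the paper's: identify the order that $\pi$ induces on $\PTV Y$ as the pointwise order of $\V$-relations, deduce the $\V$-power-enrichment from monotonicity of the Kleisli convolution, and read off the order-isomorphism from the explicit form of $Q$. Your first and third steps are correct and in places more explicit than the paper's: the formula $Q(f)(\fy,x)=f(x)(\fy,\star)$ and the computation $Lf(\psi)=\psi\kleisli Q(f)$ are both right, and the latter is a clean way to obtain $f\le g\implies Lf\le Lg$ once the order is known to be pointwise. Your general observation that, for any $\mPV$-algebra, the $\V$-category-induced order of~\ref{ssec:SmallV-Cats} agrees with the underlying lattice order (via $z\ast k=z$ and the adjunction of~\ref{ssec:Free}) is also correct.

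The gap is exactly where you flag it, in step 2, and the route you propose for closing it would not work. Knowing that $\pi_Y:\PV Y\to\PTV Y$ is a $\mPV$-algebra morphism out of the pointwise-ordered free algebra only controls suprema of families lying in the image of $\pi_Y$, and $\pi_Y$ is far from surjective in general (for the ultrafilter monad and $\V=\two$ its image consists only of the ``principal'' unitary relations); it says nothing about how two arbitrary $\varphi,\psi\in\PTV Y$ compare in the algebra-induced lattice. What is actually needed is an evaluation of the structure map $\yonmult_Y\cdot\pi_{\PTV Y}$ on the characteristic function $\chi_{\{\varphi,\psi\}}\in\PV\PTV Y$ of a pair of arbitrary unitary relations: associativity of $\emT$ (preservation of composition by $\eT$) gives $\yonmult_Y\cdot\pi_{\PTV Y}(\chi_{\{\varphi,\psi\}})=e_1^\circ\cdot\eT(\varphi\vee\psi)\cdot m_Y^\circ$; the lax-extension condition $r\cdot e_X^\circ\le e_Y^\circ\cdot\eT r$ shows this is $\ge\varphi\vee\psi$; and unitarity of $\psi$ shows it equals $\psi$ precisely when $\varphi\le\psi$ pointwise. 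Together these identify the induced order with the pointwise one. None of these three ingredients --- associativity, the unit condition of the lax extension, unitarity of the elements of $\PTV Y$ --- appears in your sketch, and without them the induced join could a priori be strictly larger than the pointwise join. This computation is essentially the whole of the paper's proof of the first assertion, so the step you deferred is not a routine verification but the substance of the theorem.
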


\begin{proof}
Consider the natural transformation $\pi:\mPV\to\mPTV$ of Proposition~\ref{prop:NatTransfIntoPi}, and for $\varphi,\psi\in\PTV X$, define $\chi_{\{\varphi,\psi\}}:TX\relto 1$ to be the characteristic function of $\{\phi,\psi\}$ (so that $\chi_{\{\varphi,\psi\}}(\fx,\star)$ takes value $k\in\V$ if $\fx\in\{\varphi,\psi\}$ and $\bot$ otherwise). Using that $\chi_{\{\varphi,\psi\}}\cdot\epsilon_X=\varphi\vee\psi$ (in the pointwise order of $\PTV X=\URels{\TV}(X,1)$), we compute
\[
\yonmult_X\cdot\pi_{T X}(\chi_{\{\varphi,\psi\}})=e_1^\circ\cdot\eT(\varphi\vee\psi)\cdot m_X^\circ,
\]
so in particular $\varphi\vee\psi\le\yonmult_X\cdot\pi_{T X}(\chi_{\{\varphi,\psi\}})$. Hence, $\varphi\le\psi$ if and only if $\yonmult_X\cdot\pi_{T X}(\chi_{\{\varphi,\psi\}})=\psi$. Since the order $\prec$ induced on $\PTV X$ by $\tau$ is given by
\[
\varphi\prec\psi\iff\yonmult_X\cdot\pi_{T X}(\chi_{\{\varphi,\psi\}})=\psi,
\]
the relation $\prec$ describes the pointwise order of $\PTV X$. The condition for $\mPTV$ being $\V$-power-enriched is equivalent to the requirement that $\Set_\mPTV$ form an ordered category with respect to the order induced by $\pi$; but this follows immediately from the fact that $\Rels{\V}$ is ordered. 
\end{proof}

\begin{rem}\label{rem:ActionOnPiX}
The previous Theorem shows that the order induced by $\pi:\mPV\to\PTV$ on $\PTV X=\URels{\TV}(X,1)$ is the pointwise order of $\V$-relations. With respect to this order, $\PTV X$ is then a complete lattice (whose infimum operation can easily be checked to be the infimum in $\Rels{\V}$). Moreover, the right $\V$-action on $\PTV X$ is given by
\[
\varphi\ast v=\yonmult_X\cdot\pi_{T X}(\eta_{\PTV X}(\varphi)\otimes v)=e_1^\circ\cdot\eT(\varphi\otimes v)\cdot m_X^\circ.
\]
for all $\varphi\in\PTV X$ and $v\in\V$. The internal hom can then be obtained by noticing that for $\psi\in\PTV X$,
\[
\varphi\ast v\le\psi\iff e_1^\circ\cdot\eT(\varphi\otimes v)\cdot m_X^\circ\le\psi\iff\varphi\otimes v\le\psi;
\]
hence, proceeding as in~\ref{ssec:Free}, we obtain
\begin{align*}
K((\yonmult_X\cdot\pi_{T X})^\dashv)(\varphi,\psi)&=\psi\Vhoml\varphi
\end{align*}
for all $\varphi,\psi\in\Rels{\V}(TX,1)$. In other words, the internal hom of the $\V$-category $\URels{(\mT,\V)}(X,1)$ is obtained by restriction of the internal hom of $\Rels{V}(TX,1)$; this also justifies \textit{a posteriori} the notation used for the internal hom of $TX$ defined in~\ref{ssec:Free}.
\end{rem}

\subsection{The Yoneda embedding}
In our context, the Yoneda lemma takes on the following form.

\begin{lem}\label{lem:Yoneda}
Let $\mPTV$ be the discrete presheaf monad of an associative lax extension $\emT$ of $\mT$ to $\Rels{\V}$. Then
\[
(\psi\Vhoml x^\sharp)=\psi(x,\star)
\]
for all $\psi\in\PTV X$ and $x\in X$.
\end{lem}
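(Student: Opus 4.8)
The plan is to prove the identity by a pointwise computation in $\V$, after first making the $\V$-relation $x^\sharp$ explicit. Throughout I read the right-hand side $\psi(x,\star)$ as $\psi(e_X(x),\star)$, identifying $x\in X$ with $e_X(x)\in TX$. Recall that $\psi\in\PTV X$ is a unitary $\TV$-relation $\psi\colon TX\relto 1$ and that $x^\sharp=\yoneda_X(x)$ lies in $\PTV X$ as well. By Remark~\ref{rem:ActionOnPiX} the expression $\psi\Vhoml x^\sharp$ is the internal hom of the $\V$-category $\PTV X$ evaluated at $(x^\sharp,\psi)$, so by the defining formula for the extension in $\Rels{\V}$ (see~\ref{ssec:OrdV-Rel}) the quantity to be computed is
\[
(\psi\Vhoml x^\sharp)=\Wedge_{\fx\in TX}\big(x^\sharp(\fx,\star)\Vhomr\psi(\fx,\star)\big).
\]

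The one genuinely computational step, and the one on which everything hinges, is to identify this integrand. Starting from $x^\sharp=e_1^\circ\cdot\eT(x^\circ)$, I would write $x^\circ=x^\circ\cdot 1_X$ and apply condition~\ref{LaxExt}\eqref{cond:LaxExt4} to get $\eT(x^\circ)=(Tx)^\circ\cdot\eT 1_X$; naturality of $e$ (so that $Tx\cdot e_1=e_X\cdot x$) together with compatibility of whiskering with involution (see~\ref{ssec:MapsV-Rel}) then gives $e_1^\circ\cdot(Tx)^\circ=x^\circ\cdot e_X^\circ$, whence $x^\sharp=x^\circ\cdot e_X^\circ\cdot\eT 1_X=x^\circ\cdot 1_X^\sharp$. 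Computing the $\V$-relational composite pointwise yields
\[
x^\sharp(\fx,\star)=1_X^\sharp(\fx,x)=\eT 1_X(\fx,e_X(x))
\]
for all $\fx\in TX$.

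With this formula in hand the two inequalities are short. For $(\psi\Vhoml x^\sharp)\le\psi(x,\star)$ I would evaluate the infimum at $\fx=e_X(x)$: lax functoriality of $\eT$ gives $k\le 1_{TX}\le\eT 1_X$, hence $k\le\eT 1_X(e_X(x),e_X(x))=x^\sharp(e_X(x),\star)$, and since $\Vhomr$ is antitone in its first variable with $(k\Vhomr b)=b$, the term of the infimum at $e_X(x)$ is already bounded above by $\psi(e_X(x),\star)$. For the reverse inequality I would invoke that $\psi$ is unitary, precisely the identity $\psi\cdot\eT 1_X=\psi$, which gives
\[
\eT 1_X(\fx,e_X(x))\otimes\psi(e_X(x),\star)\le\Vee_{\fz\in TX}\eT 1_X(\fx,\fz)\otimes\psi(\fz,\star)=\psi(\fx,\star)
\]
for every $\fx$; by the adjunction defining $\Vhomr$ this reads $\psi(e_X(x),\star)\le\big(x^\sharp(\fx,\star)\Vhomr\psi(\fx,\star)\big)$ for all $\fx$, and passing to the infimum over $\fx$ closes the argument. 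The only real care needed is bookkeeping around the possible non-commutativity of $\otimes$, so that the residual $\Vhomr$ (and not $\Vhoml$) is used and the unit $k$ is tensored on the correct side; no associativity of the Kleisli convolution is required beyond the unitary law itself.
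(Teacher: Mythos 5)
Your proof is correct and follows essentially the same route as the paper's: both reduce $(\psi\Vhoml x^\sharp)$ to the pointwise infimum $\Wedge_{\fx}(x^\sharp(\fx,\star)\Vhomr\psi(\fx,\star))$ via Remark~\ref{rem:ActionOnPiX} and~\ref{ssec:Free}, get ``$\le$'' by evaluating at $e_X(x)$ using $k\le x^\sharp(e_X(x),\star)$, and get ``$\ge$'' from the unitary law $\psi\cdot\eT 1_X=\psi$ together with the adjunction defining $\Vhomr$. Your explicit identification $x^\sharp(\fx,\star)=\eT 1_X(\fx,e_X(x))$ is a welcome piece of bookkeeping that the paper leaves implicit.
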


\begin{proof}
Remark~\ref{rem:ActionOnPiX} with~\ref{ssec:Free} yields
\[
(\psi\Vhoml\varphi)=\bigwedge\{\varphi(y,\star)\Vhomr \psi(y,\star)\mid y\in X\}
\]
for all $\psi\in\PTV X$. Setting $\varphi=x^\sharp$ (for $x\in X$), and observing that $k\le x^\circ\cdot\eT1_X(x,\star)=x^\sharp(x,\star)$ we obtain
\[
(\psi\Vhoml x^\sharp)\le(x^\sharp(x,\star)\Vhomr \psi(x,\star))\le(k\Vhomr \psi(x,\star))=\psi(x,\star).
\]
Moreover, $\psi$ is unitary, so $\eT1_X(y,x)\otimes\psi(x,\star)\le\psi(y,\star)$ for all $y\in X$, that is,
\[
\psi(x,\star)\le(\eT1_X(y,x)\Vhomr \psi(y,\star)),
\]
for all $y\in X$, and therefore $\psi(x,\star)\le(\psi\Vhoml x^\sharp)$.
\end{proof}

The Yoneda embedding then follows.

\begin{prop}\label{prop:Yoneda}
Let $\mPTV$ be the discrete presheaf monad of an associative lax extension $\emT$ of $\mT$ to $\Rels{\V}$. Then
\[
\eT r(\fx,\fy)=\kePTV r(\fx^\sharp,\fy^\sharp)
\]
for all $\V$-relations $r:X\relto Y$, and $\fx\in TX$, $\fy\in TY$.
\end{prop}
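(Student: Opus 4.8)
The plan is to read $\fx^\sharp\in\PTV X$, for $\fx\in TX$, as the representable presheaf $\fx^\circ\cdot\eT 1_X$, that is $\fx^\sharp(\fz,\star)=\eT 1_X(\fz,\fx)$; for $\fx=e_X x$ this recovers $\yoneda_X(x)=x^\sharp$, and it is the Yoneda embedding of the $\V$-category $(TX,\eT 1_X)$ into its presheaves. A short calculation with the monad laws together with \eqref{cond:LaxExt4} and \eqref{cond:LaxExt6} shows $\fx^\sharp$ is unitary, so it indeed lies in $\PTV X$. The first step is then to extend Lemma~\ref{lem:Yoneda} verbatim from $x\in X$ to $\fx\in TX$: since $\eT 1_X$ is reflexive ($1_{TX}\le\eT 1_X$ by \eqref{cond:LaxExt2}) and every $\psi\in\PTV X$ satisfies $\psi\cdot\eT 1_X=\psi$, the same two inequalities give the enriched Yoneda lemma $(\psi\Vhoml\fx^\sharp)=\psi(\fx,\star)$ for all $\psi\in\PTV X$ and $\fx\in TX$.

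Next I would unfold the left-hand side through the Kleisli extension of the $\V$-power-enriched monad $(\mPTV,\pi)$ supplied by Theorem~\ref{thm:KleisliOfPi}. By the definition in~\ref{Kleisli_ext}, $\kePTV r(\fx^\sharp,\fy^\sharp)=(r^\pi(\fy^\sharp)\Vhoml\fx^\sharp)$, where $r^\pi=\yonmult_X\cdot\PTV(\pi_X\cdot r^\flat):\PTV Y\to\PTV X$ and the internal hom $\Vhoml$ on $\PTV X$ is the pointwise one of Remark~\ref{rem:ActionOnPiX}. Since $r^\pi(\fy^\sharp)$ is a genuine element of $\PTV X$, the enriched Yoneda lemma collapses this to $r^\pi(\fy^\sharp)(\fx,\star)$, so everything reduces to computing $r^\pi(\fy^\sharp)$.

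The key identification is $r^\pi(\upsilon)=\upsilon\kleisli r_\sharp$ (Kleisli convolution in $\URels{\TV}$), where $r_\sharp=e_Y^\circ\cdot\eT r$ is the unitary relation of the functor $(-)_\sharp$. Writing $g=\pi_X\cdot r^\flat$ and using the monad formulas $\PTV g(\upsilon)=\upsilon\kleisli g^\sharp$ and $\yonmult_X(\Psi)=\Psi\kleisli\epsilon_X$ with associativity of the convolution gives $r^\pi(\upsilon)=\upsilon\kleisli(g^\sharp\kleisli\epsilon_X)$; here $g^\sharp\kleisli\epsilon_X$ is the image $Q(g)$ under the comparison functor of Proposition~\ref{prop:NatTransfIntoPi}, and checking $\pi_X\cdot r^\flat=(r_\sharp)^\flat$ (via \eqref{cond:LaxExt4} and naturality of $e$) identifies $Q(g)=r_\sharp$. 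It then remains to evaluate $\fy^\sharp\kleisli r_\sharp=\fy^\sharp\cdot\eT(r_\sharp)\cdot m_X^\circ$: using \eqref{cond:LaxExt4} to rewrite $\eT(e_Y^\circ\cdot\eT r)=(Te_Y)^\circ\cdot\eT\eT r$, the associativity equality $\eT\eT r\cdot m_X^\circ=m_Y^\circ\cdot\eT r$, the monad unit law $m_Y\cdot Te_Y=1_{TY}$ (which collapses $(Te_Y)^\circ\cdot m_Y^\circ$ to $1_{TY}$), and $\eT 1_Y\cdot\eT r=\eT r$, the expression telescopes to $\fy^\circ\cdot\eT r$. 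Evaluating at $(\fx,\star)$ gives $\eT r(\fx,\fy)$, which is the claim.

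The main obstacle is the bookkeeping of the third paragraph: threading $r^\pi$ correctly through the Kleisli formalism of $\mPTV$ and the isomorphism $Q$ to reach $\upsilon\kleisli r_\sharp$, and then running the telescoping so that the associativity/naturality of $m^\circ$ and the monad unit law conspire to cancel. Keeping the handedness of the Kleisli convolution and the variance of $\URels{\TV}^\op$ straight is where slips are most likely; by contrast, the representable identification and the Yoneda reduction of the first two paragraphs are routine once one sees that $\fx\mapsto\fx^\circ\cdot\eT 1_X$ is the right embedding.
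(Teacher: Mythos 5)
Your argument is correct and follows essentially the same route as the paper's proof: the paper likewise reduces via Lemma~\ref{lem:Yoneda} (applied to $\fx^\sharp=\fx^\circ\cdot\eT 1_X$ for $\fx\in TX$) to the identity $r^\pi(\psi)=\psi\cdot\eT r$, which it asserts as ``a direct computation'' and which your third paragraph carries out explicitly through $r^\pi(\upsilon)=\upsilon\kleisli r_\sharp$. Your extra care in checking that $\fx^\sharp$ is unitary and that the Yoneda lemma extends from $e_X(x)$ to arbitrary $\fx\in TX$ fills in details the paper leaves implicit, but introduces no new idea.
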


\begin{proof}
A direct computation shows that $r^\pi(\psi)=\psi\cdot\eT r$ for all $\psi\in\PTV X$. We can then use Lemma~\ref{lem:Yoneda} to write
\[
\kePTV r(\fx^\sharp,\fy^\sharp)=(r^\pi(\fy^\sharp\:\!)\Vhoml\fx^\sharp)=r^\pi(\fy^\sharp\:\!)(\fx,\star)=\fy^\sharp\cdot\eT r(\fx,\star)=\eT r(\fx,\fy)
\]
for all $\fx\in TX$, $\fy\in TY$.
\end{proof}

%%%%%%%%%%%%%%%%%%%%%%%%%%%%%%%%%%%%%%%%%

\section{Categories of lax extensions}

\subsection{The category of associative lax extensions}
Given a quantale $\V$, an object of the category\footnote{Strictly speaking, $\LaxExts{\V}$ is a metacategory, but we ignore such size issues here for questions of readability.}
\[
\LaxExts{\V}
\]
is a pair $(\mT,\emT)$ consisting of a monad $\mT$ on $\Set$ and of an associative lax extension $\emT$ of $\mT$ to $\Rels{\V}$. A morphism $\alpha:(\mS,\emS)\to(\mT,\emT)$ is a monad morphism $\alpha:\mS\to\mT$ such that $(m\cdot T\alpha)^\circ\cdot\eT1:\eT\relto\eT\eS$ is a natural transformation in $\Rels{\V}$.

\begin{rem}
Since $m^\circ:\eT\relto\eT\eT$ is a natural transformation, the identity $1_\mT:(\mT,\emT)\to(\mT,\emT)$ is a morphism of $\LaxExts{\V}$. Moreover, morphisms of $\LaxExts{\V}$ compose. Indeed, consider monad morphisms $\alpha:\mS\to\mT$ and $\beta:\mR\to\mS$ such that $(m\cdot T\alpha)^\circ\cdot\eT1:\eT\relto\eT\eS$ and $(n\cdot S\beta)^\circ\cdot\eS1:\eS\relto\eS\eR$ are natural transformations in $\Rels{\V}$. Then $\alpha\cdot\beta:\mR\to\mT$ is a monad morphism, and for a $\V$-relation $r:X\relto Y$, one obtains that $(m\cdot T(\alpha\cdot\beta))^\circ\cdot\eT1:\eT\relto\eT\eR$ is a natural transformation by using the hypotheses on $\alpha$ and $\beta$, together with the fact that $m_X^\circ\cdot\eT1_X=\eT\eT1_{X}\cdot m_X^\circ\cdot\eT1_{X}=\eT1_{TX}\cdot m_X^\circ\cdot\eT1_{X}$ and that $\emS$ and $\emT$ are associative lax extensions:
\begin{align*}
(m_Y\cdot&T(\alpha_Y\cdot\beta_Y))^\circ\cdot\eT1_Y\cdot\eT r\\
&=(T\beta_Y)^\circ\cdot\eT\eS r\cdot(m_X\cdot T\alpha_X)^\circ\cdot\eT1_X\\
&=(Td_{SRY})^\circ\cdot(TS\beta_Y)^\circ\cdot(Tn_Y)^\circ\cdot\eT\eS r\cdot(m_X\cdot T\alpha_X)^\circ\cdot\eT1_X\\
&=(Td_{SRY})^\circ\cdot\eT((S\beta_Y)^\circ\cdot n_Y^\circ\cdot\eS r)\cdot(m_X\cdot T\alpha_X)^\circ\cdot\eT1_X\\
&=(Td_{SRY})^\circ\cdot\eT(\eS\eR r\cdot(S\beta_{X})^\circ\cdot n_{X}^\circ\cdot\eS1_{X})\cdot(m_X\cdot T\alpha_X)^\circ\cdot\eT1_X\\
&=(Td_{SRY})^\circ\cdot\eT(\eS\eR r\cdot(S\beta_{X})^\circ\cdot n_{X}^\circ)\cdot(m_X\cdot T\alpha_X)^\circ\cdot\eT1_X\\
&=(Td_{SRY})^\circ\cdot\eT(\eS\eR r\cdot(S\beta_{X})^\circ\cdot n_{X}^\circ)\cdot\eT(\alpha_X^\circ)\cdot m_X^\circ\cdot\eT1_X\\
&=(Td_{SRY})^\circ\cdot\eT(\eS\eR r\cdot(S\beta_{X})^\circ\cdot n_{X}^\circ\cdot\alpha_X^\circ)\cdot m_X^\circ\cdot\eT1_X\\
&=(Td_{SRY})^\circ\cdot\eT(\eS\eR r\cdot\alpha_{RX}^\circ\cdot(T\beta_X)^\circ\cdot(T\alpha_X)^\circ\cdot m_X^\circ)\cdot m_X^\circ\cdot\eT1_X\\
&=(Td_{SRY})^\circ\cdot\eT(\eS\eR r\cdot\alpha_{RX}^\circ\cdot(T\beta_X)^\circ\cdot(T\alpha_X)^\circ\cdot m_X^\circ\cdot\eT1_X)\cdot m_X^\circ\cdot\eT1_X\\
&=(Td_{SRY})^\circ\cdot\eT\eS\eR r\cdot\eT(\alpha_{RX}^\circ)\cdot\eT\eT((\alpha\cdot\beta_X)^\circ)\cdot (Tm_X)^\circ\cdot m_X^\circ\cdot\eT1_X\\
&=(Td_{SRY})^\circ\cdot\eT\eS\eR r\cdot\eT(\alpha_{RX}^\circ)\cdot m_{RX}^\circ\cdot\eT((\alpha\cdot\beta_X)^\circ)\cdot m_X^\circ\cdot\eT1_X\\
&=\eT\eR r\cdot(m_X\cdot T(\alpha_X\cdot\beta_X))^\circ\cdot\eT1_X.
\end{align*}
\end{rem}

\subsection{The Kleisli extension as universal object}
We now proceed to describe an adjunction
\[
F\dashv G:(\mP_\V\comma\Mnds{\Set})\to\LaxExts{\V}
\]
that presents the comma category of monads under $\mP_\V$ as a full reflective subcategory of the category of associative lax extensions to $\Rels{\V}$ (Theorem~\ref{thm:TheAdjunction}). In particular, the discrete presheaf monad of an associative lax extension to $\Rels{\V}$ appears as its free $\V$-cocompletion (Proposition~\ref{prop:YonedaLaxExt}).

\begin{lem}\label{lem:URelDiag}
A morphism $\alpha:(\mS,\emS)\to(\mT,\emT)$ in $\LaxExts{\V}$ induces a functor $A:\URels{\SV}\to\URels{\TV}$ that makes the following diagram commute:
\[
\xymatrix@C=1em{\URels{\SV}^\op\ar[rr]^{A^\op}&&\URels{\TV}^\op\\
&\Set\ar[ul]^{(-)^\sharp}\ar[ur]_{(-)^\sharp}&}
\]
\end{lem}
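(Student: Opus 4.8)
The plan is to take $A$ to be the identity on objects, and to send a morphism $\varphi\colon X\krelto Y$ of $\URels{\SV}$ — that is, a unitary $(\mS,\V)$-relation $\varphi\colon SX\relto Y$ — to the $\V$-relation obtained by precomposing with the opposite of $\alpha$,
\[
A\varphi:=\varphi\cdot\alpha_X^\circ\colon TX\relto Y,
\]
writing $\mS=(S,n,d)$ and $\mT=(T,m,e)$. Three things then need checking: that $A\varphi$ is again a \emph{unitary} $(\mT,\V)$-relation; that $A$ preserves Kleisli convolution and identities; and that $A(f^{\sharp_\mS})=f^{\sharp_\mT}$ for every $\Set$-map $f$, which is exactly commutativity of the triangle (both legs being identity on objects, it is enough to compare them on morphisms).

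For well-definedness I would run the unitarity of $\varphi$, taken in the inequational form recalled in~\ref{LaxExt}, through the naturality of $\theta:=(m\cdot T\alpha)^\circ\cdot\eT1\colon\eT\relto\eT\eS$ that defines a $\LaxExts{\V}$-morphism; note that $m\cdot T\alpha\colon TS\to T$ is a genuine natural transformation (naturality of $\alpha$ and of $m$), so $\theta_X=(m_X\cdot T\alpha_X)^\circ\cdot\eT1_X$. Expanding $\eT(A\varphi)=\eT\varphi\cdot\eT(\alpha_X^\circ)$ by associativity of $\emT$, using the whiskering identity $\eT(\alpha_X^\circ)=(T\alpha_X)^\circ\cdot\eT1_{TX}$ from~\ref{LaxExt}\eqref{cond:LaxExt4}, and using the associativity equality $\eT\eT1_X\cdot m_X^\circ=m_X^\circ\cdot\eT1_X$, one relates $\eT(\alpha_X^\circ)\cdot m_X^\circ$ to $\theta_X$ and then absorbs the extension back into $\varphi$ via $\varphi\cdot\eS1_X=\varphi$; this should yield both $A\varphi\cdot\eT1_X\le A\varphi$ and $e_Y^\circ\cdot\eT(A\varphi)\cdot m_X^\circ\le A\varphi$, hence unitarity.

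Functoriality is the heart of the matter, and preservation of Kleisli convolution is the step I expect to be the main obstacle. Spelling the two sides out, $A(\psi\kleisli\varphi)=\psi\cdot\eS\varphi\cdot n_X^\circ\cdot\alpha_X^\circ$ and $A\psi\kleisli A\varphi=\psi\cdot\alpha_Y^\circ\cdot\eT\varphi\cdot\eT(\alpha_X^\circ)\cdot m_X^\circ$; since left-composition with $\psi$ is monotone it suffices to match the tails, and rewriting $n_X^\circ\cdot\alpha_X^\circ=\alpha_{SX}^\circ\cdot(m_X\cdot T\alpha_X)^\circ$ by the monad-morphism law $\alpha_X\cdot n_X=m_X\cdot T\alpha_X\cdot\alpha_{SX}$ turns the goal into
\[
\alpha_Y^\circ\cdot\eT\varphi\cdot\eT(\alpha_X^\circ)\cdot m_X^\circ=\eS\varphi\cdot\alpha_{SX}^\circ\cdot(m_X\cdot T\alpha_X)^\circ,
\]
an equality of $\V$-relations $TX\relto SY$ that I expect to follow from the defining naturality of the $\LaxExts{\V}$-morphism together with associativity of $\emS$ and $\emT$, by a computation parallel to the composition-closure verification in the Remark preceding this lemma. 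Preservation of identities, $A(1_X^{\sharp_\mS})=1_X^{\sharp_\mT}$, is the specialization of this to $\varphi=\psi=1_X^{\sharp_\mS}$, now also calling on the unit law $\alpha_X\cdot d_X=e_X$. The triangle then reduces to this last identity: from the relation $f^{\sharp_\mS}=f^\circ\cdot1_Y^{\sharp_\mS}$ (a consequence of the formula $f^\sharp\kleisli\varphi=f^\circ\cdot\varphi$ established earlier) and associativity of $\V$-relational composition, one gets $A(f^{\sharp_\mS})=f^\circ\cdot A(1_Y^{\sharp_\mS})=f^\circ\cdot1_Y^{\sharp_\mT}=f^{\sharp_\mT}$, as required.
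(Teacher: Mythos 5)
There is a genuine gap, and it is located at the very first step: the definition of $A$ on morphisms. The assignment $A\varphi:=\varphi\cdot\alpha_X^\circ$ does not yield the required functor, because it fails to preserve identities, and your own (correct) reduction shows that commutativity of the triangle is equivalent to $A(1_Y^{\sharp_\mS})=1_Y^{\sharp_\mT}$. Writing $\mS=(S,n,d)$, your candidate gives $A(1_X^{\sharp_\mS})=d_X^\circ\cdot\eS 1_X\cdot\alpha_X^\circ$, which is $\bot$ at every $\fx\in TX$ outside the image of $\alpha_X$, whereas $1_X^{\sharp_\mT}=e_X^\circ\cdot\eT 1_X$ need not be. For a concrete failure, take $\mS=\mId$ with the identity extension $\emId$ and $\alpha=e:\mId\to\mT$; this is always a morphism of $\LaxExts{\V}$, since $m\cdot Te=1_T$ makes the required naturality of $(m\cdot Te)^\circ\cdot\eT 1=\eT 1$ reduce to $\eT 1_Y\cdot\eT r=\eT r\cdot\eT 1_X$, which holds because $\eT$ preserves composition. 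Your $A$ then sends the identity $1_X$ of $\URels{(\mId,\V)}$ to $e_X^\circ$, while the identity of $\URels{\TV}$ is $e_X^\circ\cdot\eT 1_X$; for $\mT=\mP$ with its Kleisli extension ($\V=\two$) one has $\keP 1_X(\emptyset,\{x\})=\top$ but $e_X^\circ(\emptyset,x)=\bot$, so the two disagree. For the same reason $\varphi\cdot\alpha_X^\circ$ is in general not unitary, and the convolution identity you single out as the main obstacle is genuinely false rather than merely hard.

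The repair is to build the re-unitarization into the definition, using exactly the transformation $\theta=(m\cdot T\alpha)^\circ\cdot\eT 1$ whose naturality is the defining condition on a $\LaxExts{\V}$-morphism. The paper sets
\[
A\varphi:=e_Y^\circ\cdot\eT\varphi\cdot(m_X\cdot T\alpha_X)^\circ\cdot\eT 1_X,
\]
which reduces to $e_Y^\circ\cdot\eT\varphi=\varphi_\sharp$ in the example above and to $\varphi$ itself when $\alpha$ is an identity. Note that your formula cannot be recovered from this one: $\eT(\alpha_X^\circ)\cdot m_X^\circ=(T\alpha_X)^\circ\cdot\eT 1_{TX}\cdot m_X^\circ$ is in general only below $(T\alpha_X)^\circ\cdot m_X^\circ\cdot\eT 1_X$, because $\eT 1_{TX}\le\eT\eT 1_X$ need not be an equality. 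With the corrected definition, the remainder of your plan --- identity on objects, unitarity of $A\varphi$, preservation of $\kleisli$ by a naturality computation in the spirit of the Remark on composability of $\LaxExts{\V}$-morphisms, and reduction of the triangle to preservation of identities via $f^{\sharp}=f^\circ\cdot 1_Y^{\sharp}$ --- does match the structure of the paper's proof.
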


\begin{proof}
For a $\SV$-relation $r:SX\relto Y$, we define the following $(\mT,\V)$-relation:
\[
Ar:=e_Y^\circ\cdot\eT r\cdot(m_X\cdot T\alpha_X)^\circ\cdot\eT 1_X:TX\relto Y.
\]
To verify that $A$ preserves Kleisli convolution, consider $\SV$-relations $r:X\krelto Y$ and $s:Y\krelto Z$. We first observe that by naturality of $(m\cdot T\alpha)^\circ\cdot\eT1$,
\begin{align*}
\eT1_{SX}\cdot (T\alpha_X)^\circ\cdot m_X^\circ\cdot\eT1_X&=\eT1_{SX}\cdot (T\alpha_X)^\circ\cdot m_X^\circ\cdot\eT1_X\cdot\eT1_X\\
&=\eT\eS1_{X}\cdot (T\alpha_X)^\circ\cdot m_X^\circ\cdot\eT1_X\\
&=(T\alpha_X)^\circ\cdot m_X^\circ\cdot\eT1_X
\end{align*}
and recall that that $m_X^\circ\cdot\eT1_X=\eT1_{TX}\cdot m_X^\circ\cdot\eT1_{X}$. These remarks allow us to establish the following chain of equalities:
\begin{align*}
As\kleisli Ar&=e_Z^\circ\cdot\eT s\cdot(m_Y\cdot T\alpha_Y)^\circ\cdot\eT(e_Y^\circ\cdot\eT r\cdot(m_X\cdot T\alpha_X)^\circ\cdot\eT 1_X)\cdot m_X^\circ\\
&=e_Z^\circ\cdot\eT s\cdot(m_Y\cdot T\alpha_Y)^\circ\cdot(Te_Y)^\circ\cdot\eT\eT r\cdot(TT\alpha_X)^\circ\cdot(Tm_X)^\circ\cdot\eT\eT 1_X\cdot m_X^\circ\\
&=e_Z^\circ\cdot\eT s\cdot(m_Y\cdot T\alpha_Y)^\circ\cdot(Te_Y)^\circ\cdot\eT\eT r\cdot(TT\alpha_X)^\circ\cdot(m_{TX})^\circ\cdot m_X^\circ\cdot\eT 1_X\\
&=e_Z^\circ\cdot\eT s\cdot(m_Y\cdot T\alpha_Y)^\circ\cdot(Te_Y)^\circ\cdot m_Y^\circ\cdot\eT r\cdot(T\alpha_X)^\circ\cdot m_X^\circ\cdot\eT 1_X\\
&=e_Z^\circ\cdot\eT s\cdot(m_Y\cdot T\alpha_Y)^\circ\cdot\eT r\cdot(T\alpha_X)^\circ\cdot m_X^\circ\cdot\eT 1_X\\
&=e_Z^\circ\cdot\eT(s\cdot\eS r)\cdot(m_{SX}\cdot T\alpha_{SX})^\circ\cdot\eT 1_{SX}\cdot(T\alpha_X)^\circ\cdot m_X^\circ\cdot\eT 1_X\\
&=e_Z^\circ\cdot\eT(s\cdot\eS r)\cdot(T\alpha_{SX})^\circ\cdot m_{SX}^\circ\cdot(T\alpha_X)^\circ\cdot m_X^\circ\cdot\eT 1_X\\
&=e_Z^\circ\cdot\eT(s\cdot\eS r)\cdot(T\alpha_{SX})^\circ\cdot(TT\alpha_X)^\circ\cdot(Tm_X)^\circ\cdot m_X^\circ\cdot\eT 1_X\\
&=e_Z^\circ\cdot\eT(s\cdot\eS r)\cdot(Tn_X)^\circ\cdot(T\alpha_X)^\circ\cdot\eT1_{TX}\cdot m_X^\circ\cdot\eT 1_X\\
&=e_Z^\circ\cdot\eT(s\kleisli r)\cdot\eT(\alpha_X^\circ)\cdot m_X^\circ\cdot\eT 1_X\\
&=e_Z^\circ\cdot\eT(s\kleisli r)\cdot(T\alpha_X)^\circ\cdot m_X^\circ\cdot\eT 1_X=A(s\kleisli r).
\end{align*}
Using that $\eT(e_Y^\circ)\cdot m_Y^\circ=\eT1_Y$, one then computes for a map $f:Y\to X$,
\[
A(d_Y^\circ\cdot\eS(f^\circ))=e_Y^\circ\cdot\eT(d_Y^\circ)\cdot\eT(\alpha_Y^\circ)\cdot m_Y^\circ\cdot\eT(f^\circ)=e_Y\cdot\eT(f^\circ),
\]
so that the stated diagram commutes and in particular $A(1_X^\sharp)=1_X^\sharp$. Finally, since $A$ preserves Kleisli convolution, one has for a unitary $\SV$-relation $r$ that
\[
Ar\kleisli 1_X^\sharp=Ar\kleisli A(1_X^\sharp)=A(r\kleisli 1_X^\sharp)=Ar
\]
and similarly $1_X^\sharp\kleisli Ar=Ar$, so $A$ is well-defined.
\end{proof}

\begin{prop}\label{prop:F}
Morphisms $\alpha:(\mS,\emS)\to(\mT,\emT)$ in $\LaxExts{\V}$ functorially determine  morphisms $\PTV(\alpha):\PTV(\mS,\emS)\to\PTV(\mT,\emT)$ in $(\mP_\V\comma\Mnds{\Set})$ via the components
\[
\PTV(\alpha)_X(r)=e_1^\circ\cdot\eT r\cdot(m_X\cdot T\alpha_X)^\circ\cdot\eT1_X\in\URels{\TV}(X,1)
\]
for all $r\in\URels{\SV}(X,1)$. That is, there is a functor
\[
F=\PTV:\LaxExts{\V}\to(\mP_\V\comma\Mnds{\Set}).
\]
\end{prop}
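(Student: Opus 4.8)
The plan is to read the displayed formula as the restriction to hom-sets into the terminal object $1$ of the functor $A$ produced by Lemma~\ref{lem:URelDiag}, and then to extract everything from the single fact that $A$ commutes with $(-)^\sharp$. On objects, $F$ sends $(\mT,\emT)$ to the discrete presheaf monad $\mPTV(\mT,\emT)$ together with the morphism $\pi\colon\mPV\to\mPTV$ of Proposition~\ref{prop:NatTransfIntoPi}; by Theorem~\ref{thm:KleisliOfPi} this pair is $\V$-power-enriched, hence an object of $(\mPV\comma\Mnds{\Set})$. For a morphism $\alpha\colon(\mS,\emS)\to(\mT,\emT)$, Lemma~\ref{lem:URelDiag} gives an identity-on-objects functor $A\colon\URels{\SV}\to\URels{\TV}$ with $Ar=e_Y^\circ\cdot\eT r\cdot(m_X\cdot T\alpha_X)^\circ\cdot\eT 1_X$, whose opposite $A^\op$ commutes with the two functors $(-)^\sharp$.

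Since $\mPTV(\mS,\emS)$ and $\mPTV(\mT,\emT)$ are precisely the monads induced by the adjunctions $(-)^\sharp\dashv\URels{\SV}(-,1)$ and $(-)^\sharp\dashv\URels{\TV}(-,1)$ of Proposition~\ref{prop:URelSetAdjun}, and the corresponding Kleisli categories are identified with $\URels{\SV}^\op$ and $\URels{\TV}^\op$ (Proposition~\ref{prop:NatTransfIntoPi}, Theorem~\ref{thm:KleisliOfPi}) in such a way that the free functors become $(-)^\sharp$, the functor $A^\op$ is an identity-on-objects functor between Kleisli categories commuting with the left adjoints. By the bijection between monad morphisms and such Kleisli functors (the same correspondence invoked in Proposition~\ref{prop:PMndEquiv}), $A^\op$ determines a unique monad morphism $\PTV(\alpha)\colon\mPTV(\mS,\emS)\to\mPTV(\mT,\emT)$; unwinding the correspondence, its component at $X$ is the restriction of $A$ to $\URels{\SV}(X,1)\to\URels{\TV}(X,1)$, which is exactly the displayed formula.

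To see that $\PTV(\alpha)$ is a morphism of the comma category I would verify $\PTV(\alpha)\cdot\pi^{\mS}=\pi^{\mT}$. As the $X$-component of $\pi$ sends $\varphi\in\Rels{\V}(X,1)$ to $d_1^\circ\cdot\eS\varphi$ for the source and to $e_1^\circ\cdot\eT\varphi$ for the target (with $\mS=(S,n,d)$, $\mT=(T,m,e)$), this reduces to showing $A(d_1^\circ\cdot\eS\varphi)=e_1^\circ\cdot\eT\varphi$. That is a short computation from the defining formula for $A$, entirely analogous to the map-case $A(d_Y^\circ\cdot\eS(f^\circ))=e_Y^\circ\cdot\eT(f^\circ)$ already carried out in the proof of Lemma~\ref{lem:URelDiag}, using only the lax-extension axioms and $\eT(e_Y^\circ)\cdot m_Y^\circ=\eT 1_Y$.

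Finally, for functoriality, the identity $\PTV(1_\mT)=1$ is immediate: for unitary $r$ the formula collapses, via the unitarity equations $e_1^\circ\cdot\eT r\cdot m_X^\circ=r$ and $r\cdot\eT 1_X=r$, to $\PTV(1)_X(r)=r$. For composability, given $\beta\colon(\mR,\emR)\to(\mS,\emS)$ and $\alpha\colon(\mS,\emS)\to(\mT,\emT)$, reading off components into $1$ reduces $\PTV(\alpha\cdot\beta)=\PTV(\alpha)\cdot\PTV(\beta)$ to the functor identity $A_{\alpha\beta}=A_\alpha\circ A_\beta$ restricted to $\URels{(\mR,\V)}(X,1)$. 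Expanding $A_\alpha(A_\beta r)$ with the defining formula, pushing $\eT$ through the composite (legitimate since $\emT$ is associative, so $\eT$ preserves composition), and repeatedly invoking the naturality of $(m\cdot T\alpha)^\circ\cdot\eT 1$ and $(n\cdot S\beta)^\circ\cdot\eS 1$ together with the idempotency identity $m_X^\circ\cdot\eT 1_X=\eT 1_{TX}\cdot m_X^\circ\cdot\eT 1_X$ should transform it into $A_{\alpha\beta}r$. I expect this last calculation to be the main obstacle: it runs exactly parallel to the long chain of equalities showing that morphisms of $\LaxExts{\V}$ are closed under composition (the remark preceding this subsection), the other steps being either citations or one-line collapses.
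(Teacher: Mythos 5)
Your proposal is correct and follows essentially the same route as the paper, whose proof is a one-liner deducing everything from Lemma~\ref{lem:URelDiag}, the identification $\URels{\TV}^\op\cong\Set_{\mPTV}$ of Proposition~\ref{prop:NatTransfIntoPi}, and the classical functorial correspondence between monad morphisms and Kleisli functors commuting with the left adjoints. You are in fact more explicit than the paper on the two points it leaves implicit --- the comma-square condition $\PTV(\alpha)\cdot\pi^{\mS}=\pi^{\mT}$ and the identity $A_{\alpha\cdot\beta}=A_{\alpha}\circ A_{\beta}$ needed for functoriality on composites --- and your assessment that the latter is a computation running parallel to the closure-under-composition remark for $\LaxExts{\V}$ is accurate.
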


\begin{proof}
With the isomorphism $\URels{\TV}^\op\cong\Set_\PTV$ (Proposition~\ref{prop:NatTransfIntoPi}), the statement follows from Lemma~\ref{lem:URelDiag} together with the classical one-to-one functorial correspondence between monad morphisms and morphisms of Kleisli categories that commute with the respective left adjoint functors.
\end{proof}

\begin{prop}\label{prop:YonedaLaxExt}
Let $\emT$ be an associative lax extension of $\mT$ to $\Rels{\V}$, and $\PTV$ its associated discrete presheaf monad. Then the natural transformation $\Yoneda:T\to\PTV$, defined componentwise by
\[
 \Yoneda\!_X:TX\to\PTV X\ ,\quad\fx\mapsto\fx^\sharp=\fx^\circ\cdot \eT 1_X,
\]
yields a morphism $\Yoneda=\Yoneda\!_{(\mT,\emT)}:(\mT,\emT)\to(\mPTV,\kemPTV)$ in $\LaxExts{\V}$.
\end{prop}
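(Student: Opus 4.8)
The statement asks us to verify that $\Yoneda$ meets the two defining conditions of a morphism in $\LaxExts{\V}$: that $\Yoneda:\mT\to\mPTV$ be a morphism of monads on $\Set$, and that $(\yonmult\cdot\PTV\Yoneda)^\circ\cdot\kePTV 1:\kePTV\relto\kePTV\eT$ be a natural transformation in $\Rels{\V}$. My plan is to establish the first condition by a direct check of the three monad-morphism axioms, and the second by reducing it to Proposition~\ref{prop:Yoneda}. Throughout I would work with the pointwise description $\Yoneda_X(\fx)=\fx^\sharp$, $\fx^\sharp(\fy,\star)=\eT 1_X(\fy,\fx)$, and lean on the identity $\eT(f^\circ)=(Tf)^\circ\cdot\eT 1_Y$ (an instance of \ref{LaxExt}\eqref{cond:LaxExt4}) together with the naturality of $m^\circ$ afforded by associativity.

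For the monad-morphism axioms I would treat the unit, naturality, and multiplication laws in turn. The unit law $\Yoneda_X\cdot e_X=\yoneda_X$ reduces, upon expanding both sides, to $\eT 1_X(\fy,e_X x)=\eT(x^\circ)(\fy,e_1\star)$, which follows from \ref{LaxExt}\eqref{cond:LaxExt4} and the naturality square $Tx\cdot e_1=e_X\cdot x$. Naturality of $\Yoneda$ amounts to $\fx^\sharp\kleisli f^\sharp=(Tf\,\fx)^\sharp$; expanding $f^\sharp=e_X^\circ\cdot\eT(f^\circ)$ inside the Kleisli convolution, the block $(Te_X)^\circ\cdot m_X^\circ$ collapses to the identity by the monad law $m_X\cdot Te_X=1_{TX}$ (after moving $m_X^\circ$ across via naturality of $m^\circ$), leaving $\fx^\circ\cdot\eT(f^\circ)$, which is $(Tf\,\fx)^\sharp$ by the displayed identity. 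The multiplication law $\Yoneda_X\cdot m_X=\yonmult_X\cdot\PTV\Yoneda_X\cdot\Yoneda_{TX}$ is the most laborious of the three; I would settle it using associativity of the Kleisli convolution together with the simplification $\yonmult_X\cdot\PTV\Yoneda_X=(-)\kleisli\eT 1_X$, which itself comes from $\Yoneda_X^\sharp\kleisli\epsilon_X=\eT 1_X$ (a consequence of the rule $f^\sharp\kleisli\varphi=f^\circ\cdot\varphi$).

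For the second condition, write $\theta_X:=(\yonmult_X\cdot\PTV\Yoneda_X)^\circ\cdot\kePTV 1_X:\PTV X\relto\PTV TX$. A pointwise computation with the internal-hom description of $\kePTV 1_X$ from Remark~\ref{rem:ActionOnPiX} gives $\theta_X(\varphi,\Psi)=(\nu_X\Psi\Vhoml\varphi)$, where $\nu_X=\yonmult_X\cdot\PTV\Yoneda_X$. Using that $\kePTV$ preserves composition, the naturality square $\theta_Y\cdot\kePTV r=\kePTV(\eT r)\cdot\theta_X$ simplifies on the left to $\nu_Y^\circ\cdot\kePTV r$, so what remains is the relational identity $\nu_Y^\circ\cdot\kePTV r=\kePTV(\eT r)\cdot\nu_X^\circ\cdot\kePTV 1_X$ of $\V$-relations $\PTV X\relto\PTV TY$. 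The essential input is Proposition~\ref{prop:Yoneda}, $\eT r(\fx,\fy)=\kePTV r(\fx^\sharp,\fy^\sharp)$, read as the statement that $\eT r$ is the restriction of $\kePTV r$ along the maps $\Yoneda$; substituting it for $\eT r$ inside $\kePTV(\eT r)$ and telescoping the resulting $\kePTV\kePTV$-expression against the $\nu$'s (using the monad-morphism relation $\nu_X\cdot\Yoneda_{TX}=\Yoneda_X\cdot m_X$ and Lemma~\ref{lem:Yoneda}) should return the left-hand side.

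The hard part will be precisely this reconciliation of the two lax extensions. Proposition~\ref{prop:Yoneda} matches $\kePTV$ with $\eT$ only on \emph{representable} presheaves $\fx^\sharp$, whereas the naturality square must hold at arbitrary elements of $\PTV X$ and $\PTV TY$. The crux is therefore to bridge this gap — either by testing naturality only on the generating $\V$-relations (opposite maps $f^\circ$, graphs $g_\circ$, and relations into the singleton $1$), which suffices because both $\kePTV$ and $\kePTV\eT$ preserve composition, and then invoking Lemma~\ref{lem:Yoneda}; or by exploiting that the maps $\nu_X$ preserve the suprema through which each presheaf is assembled from representables, so that the representable case propagates. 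The multiplication law of the monad morphism is a secondary, purely computational, hurdle.
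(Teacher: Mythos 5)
Your reduction of the statement to (a) $\Yoneda$ being a monad morphism and (b) naturality of $(\yonmult\cdot\PTV\Yoneda)^\circ\cdot\kePTV 1$ is correct, and part (a) is sound: your direct check of the three axioms works, and the identities you lean on ($\fx^\sharp\kleisli f^\sharp=(Tf\,\fx)^\sharp$ and $\yonmult_X\cdot\PTV\Yoneda_X=(-)\kleisli\eT 1_X=(-)\cdot m_X^\circ\cdot\eT 1_X$) are both valid. The paper, however, gets (a) without touching the multiplication law: it observes that $(-)^\sharp$ extends to a functor $\Set_\mT\to\URels{(\mT,\V)}^\op$ commuting with the left adjoints from $\Set$, and invokes the correspondence between such Kleisli functors and monad morphisms; your route is workable but strictly more laborious.

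The genuine gap is in part (b), at exactly the point you flag as the crux, and neither of your two bridging strategies survives scrutiny. Both rest on reassembling arbitrary $\V$-relations (or arbitrary presheaves) from representables by suprema; but $\kePTV$, like every lax extension, is merely monotone and does not preserve suprema of $\V$-relations (nor does $\eT$ --- think of $\eU$), so naturality at generators does not propagate across the suprema needed in $r=\Vee_{y\in Y}\,y_\circ\cdot(y^\circ\cdot r)$, and your generator list does not produce all of $\Rels{\V}$ under composition alone. The missing idea is that no extension of Proposition~\ref{prop:Yoneda} beyond representables is required. Writing $\nu_X=\yonmult_X\cdot\PTV\Yoneda_X$, the paper first establishes the \emph{map-level} equality $\nu_X\cdot(\eT r)^\pi=r^\pi\cdot\nu_Y$ at \emph{every} $\Psi\in\PTV TY$ by a direct Kleisli computation from $\Yoneda_Y^\sharp\kleisli\epsilon_Y=\eT 1_Y$ (Proposition~\ref{prop:Yoneda} plays no role here); one inequality of the naturality square then drops out by instantiating the supremum over $\fZ\in\PTV TX$ at the single witness $\fZ=(\eT r)^\pi(\fY)$. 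The reverse inequality uses only the lax naturality $\eT r(\fx,\fy)\le\kePTV r(\fx^\sharp,\fy^\sharp)$ --- precisely the representable statement of Proposition~\ref{prop:Yoneda} --- whiskered by $\kePTV$. To repair your argument, replace both bridging strategies by this two-inequality scheme (alternatively, the reverse inequality also follows from $\nu_X$ being a $\mPV$-homomorphism, hence a $\V$-functor, giving $((\eT r)^\pi(\fY)\Vhoml\fZ)\le(r^\pi(\nu_Y\fY)\Vhoml\nu_X\fZ)$, and then transitivity of the internal hom).
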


\begin{proof}
The left adjoint functor $(-)^\sharp:\Set\to\URels{(\mT,\V)}^\op$ extends to a functor
\[
 (-)^\sharp:\Set_\mT\to\URels{(\mT,\V)}^\op
\]
sending $r:X\to TY$ to $r^\sharp:=r^\circ\cdot\eT 1_Y:Y\krelto X$, so the diagram
\[
\xymatrix{\Set_\mT\ar[rr]^-{(-)^\sharp} && \URels{(\mT,\V)}^\op\\
&\Set\ar[lu]^{F_\mT}\ar[ur]_{(-)^\sharp}}
\]
commutes. Hence, $(-)^\sharp:\Set_\mT\to\URels{(\mT,\V)}^\op$ induces a monad morphism $\Yoneda:\mT\to\PTV$ whose component $\Yoneda\!_X$ is the composite
\[
\xymatrix@C=4em{TX\ar[r]^-{\yoneda_{TX}}&\URels{(\mT,\V)}(TX,1)\ar[r]^-{(-)\kleisli {1_{TX}^\sharp}}&\URels{(\mT,\V)}(X,1),}
\]
that is, $\Yoneda\!_X(\fx)=\fx^\circ\cdot e_{TX}^\circ\cdot\eT1_{TX}\cdot\eT\eT 1_{X}\cdot m_X^\circ=\fx^\circ\cdot\eT 1_X$.

We are left to verify that $(\yonmult\cdot\PTV\Yoneda)^\circ\cdot\kePTV1:\kePTV\relto\kePTV\eT$ is natural in $\Rels{\V}$. For this, we denote by $\pi:\mPV\to\PTV$ the $\mPV$-structure of $\PTV$, and remark that for $\fx\in \PTV X$, $\fY\in \PTV TY$ and a $\V$-relation $r:X\relto Y$,
\begin{align*}
(\yonmult_Y\cdot \PTV\Yoneda_Y)^\circ\cdot\kePTV1_Y\cdot\kePTV r(\fx,\fY)&=\kePTV r(\fx,\yonmult_Y\cdot \PTV\Yoneda_Y(\fY))\\
&=((r^\pi\cdot \yonmult_Y\cdot \PTV\Yoneda_Y(\fY))\Vhoml\fx)
\end{align*}
and
\begin{align*}
\kePTV\eT r&\cdot (\yonmult_X\cdot \PTV\Yoneda_X)^\circ\cdot\kePTV1_X(\fx,\fY)\\
&=\Vee_{\fZ\in \PTV TX}(\yonmult_X\cdot \PTV\Yoneda_X(\fZ)\Vhoml\fx)\otimes((\eT r)^\pi(\fY)\Vhoml\fZ).
\end{align*}
By setting $\fZ=(\eT r)^\pi(\fY)$ in $\yonmult_X\cdot \PTV\Yoneda_X(\fZ)$, we have $k\le((\eT r)^\pi(\fY)\Vhoml\fZ)$ and compute for a unitary $(\mT,\V)$-relation $\Psi\in TY\krelto 1$,
\begin{align*}
\yonmult_X\cdot \PTV\Yoneda_X\cdot(\eT r)^\pi(\Psi)&=\yonmult_X\cdot \PTV\Yoneda_X\cdot \yonmult_{TX}\cdot \PTV(\pi_{TX}\cdot(\eT r)^\flat)(\Psi)\\
&=\yonmult_X\cdot \PTV\Yoneda_X\cdot \yonmult_{TX}\cdot \PTV(e_{TX}^\circ\cdot\eT\eT r)^\flat(\Psi)\\
&=\yonmult_X\cdot \PTV\Yoneda_X(\Psi\kleisli(((e_{TX}^\circ\cdot\eT\eT r)^\flat)^\sharp\kleisli\epsilon_{TY}))\\
&=\yonmult_X\cdot \PTV\Yoneda_X(\Psi\cdot\eT\eT r)\\
&=\Psi\cdot m_X^\circ\cdot\eT r=r^\pi\cdot \yonmult_Y\cdot \PTV\Yoneda_Y(\Psi)
\end{align*}
by using twice that $\Yoneda_{Y}^\sharp\kleisli\epsilon_{Y}=\Yoneda_{Y}^\circ\cdot\epsilon_{Y}=\eT1_{Y}$; thus, $(\yonmult_Y\cdot \PTV\Yoneda_Y)^\circ\cdot\kePTV1_Y\cdot\kePTV r\le\kePTV\eT r\cdot (\yonmult_X\cdot \PTV\Yoneda_X)^\circ\cdot\kePTV1_X$. To show the other inequality, it suffices to show that $\Yoneda^\circ:\kePTV\to\eT$ is a lax natural transformation in $\Rels{\V}$; indeed, in this case, we would have
\begin{align*}
\kePTV\eT r\cdot (\yonmult_X\cdot \PTV\Yoneda_X)^\circ\cdot\kePTV1_X&=\kePTV\eT r\cdot \kePTV(\Yoneda_X^\circ)\cdot \yonmult_X^\circ\cdot\kePTV1_X\\
&\le\kePTV(\Yoneda_Y^\circ)\cdot\kePTV\kePTV r\cdot \yonmult_X^\circ\cdot\kePTV1_X\\
&=(\yonmult_Y\cdot \PTV\Yoneda_Y)^\circ\cdot\kePTV1_Y\cdot\kePTV r.
\end{align*}
But lax naturality of $\Yoneda^\circ$, or equivalently, the inequality $\eT r(\fx,\fy)\le(r^\pi(\Yoneda_Y(\fy))\Vhoml\Yoneda_X(\fx))$ (for all $\V$-relations $r:X\relto Y$, and $\fx\in TX$, $\fy\in TY$), follows from Proposition~\ref{prop:Yoneda}.
%By definition of the internal hom, this condition is equivalent to 
%\[
%\Yoneda_X(\fx)\ast\eT r(\fx,\fy)\le\yonmult_X\cdot\PTV(\pi_X\cdot r^\flat)(\Yoneda_Y(\fy)).
%\]
%Since
%\begin{align*}
%\yonmult_X\cdot\PTV(\pi_X\cdot r^\flat)(\Yoneda_Y(\fy))&=\Yoneda_Y(\fy)\kleisli((\pi_X\cdot r^\flat)^\sharp\kleisli\epsilon_X)\\
%&=\fy^\circ\cdot\eT((\pi_X\cdot r^\flat)^\circ\cdot\epsilon_X)\cdot m_X^\circ\\
%&=\fy^\circ\cdot\eT(e_X\cdot\eT r)\cdot m_X^\circ=\fy^\circ\cdot\eT r,
%\end{align*}
%the sought inequality then follows by Remark~\ref{rem:ActionOnPiX}, since
%\[
%\Yoneda_X(\fx)\ast\eT r(\fx,\fy)=e_1^\circ\cdot\eT(\fx^\circ\cdot\eT1_X\otimes\eT r(\fx,\fy))\cdot m_X^\circ\le e_1^\circ\cdot\eT(\fy^\circ\cdot\eT r)\cdot m_X^\circ=\fy^\circ\cdot\eT r.
%\]
\end{proof}

\begin{lem}\label{lem:MndMorphLaxExtMorph}
A morphism of $\V$-power-enriched monads $\alpha:(\mS,\sigma)\to(\mT,\tau)$ satisfies
\[
\keS r\cdot\alpha_X^\circ\le\alpha_Y^\circ\cdot\keT r
\]
for all $\V$-relations $r:X\relto Y$.
\end{lem}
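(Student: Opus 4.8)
The plan is to first rewrite the relational inequality as a pointwise statement and then recognize that statement as an instance of $\V$-functoriality. Since $\alpha_X^\circ:TX\relto SX$ and $\alpha_Y^\circ:TY\relto SY$ come from maps, their values are $k$ or $\bot$, so whiskering by them is transparent: for $\fx\in TX$ and $\fy\in SY$ I would compute
\[
(\keS r\cdot\alpha_X^\circ)(\fx,\fy)=\Vee_{\fa\in\alpha_X^\inv(\fx)}\keS r(\fa,\fy)\dand(\alpha_Y^\circ\cdot\keT r)(\fx,\fy)=\keT r(\fx,\alpha_Y(\fy)).
\]
Hence the asserted inequality $\keS r\cdot\alpha_X^\circ\le\alpha_Y^\circ\cdot\keT r$ is equivalent to $\keS r(\fa,\fy)\le\keT r(\alpha_X(\fa),\alpha_Y(\fy))$ for all $\fa\in SX$, $\fy\in SY$. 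By the definition of the Kleisli extension (see~\ref{Kleisli_ext}), writing $r^\sigma:=n_X\cdot S(\sigma_X\cdot r^\flat):SY\to SX$, this is the inequality
\[
(r^\sigma(\fy)\Vhoml\fa)\le(r^\tau(\alpha_Y(\fy))\Vhoml\alpha_X(\fa)),
\]
where the internal hom on the left is formed in the $\V$-category $SX$ (via $\sigma$) and the one on the right in $TX$ (via $\tau$), as in~\ref{ssec:Free}.

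Two facts would then settle the pointwise inequality. First, I would show that $\alpha_X$ is a $\V$-functor $SX\to TX$. For this I would verify that $\alpha_X$ is a morphism of $\mPV$-algebras between $(SX,n_X\cdot\sigma_{SX})$ and $(TX,m_X\cdot\tau_{TX})$: using the multiplication axiom $\alpha_X\cdot n_X=m_X\cdot\alpha_{TX}\cdot S\alpha_X$ of the monad morphism $\alpha$, naturality of $\sigma$, and the comma-category identity $\alpha_{TX}\cdot\sigma_{TX}=\tau_{TX}$, one obtains
\[
\alpha_X\cdot(n_X\cdot\sigma_{SX})=m_X\cdot\alpha_{TX}\cdot S\alpha_X\cdot\sigma_{SX}=m_X\cdot\alpha_{TX}\cdot\sigma_{TX}\cdot\PV\alpha_X=(m_X\cdot\tau_{TX})\cdot\PV\alpha_X.
\]
Through the functor $\Sup^\V\to\Cats{\V}$ of~\ref{ssec:Free}, a morphism of $\mPV$-algebras is sent to a $\V$-functor, so $\alpha_X$ is monotone on internal homs: $(\fa'\Vhoml\fa)\le(\alpha_X(\fa')\Vhoml\alpha_X(\fa))$ for all $\fa,\fa'\in SX$. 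Second, I would establish the commutation $\alpha_X\cdot r^\sigma=r^\tau\cdot\alpha_Y$; the same multiplication axiom together with the component identity $\alpha_X\cdot\sigma_X=\tau_X$ and naturality of $\alpha$ give
\[
\alpha_X\cdot r^\sigma=m_X\cdot\alpha_{TX}\cdot S\alpha_X\cdot S(\sigma_X\cdot r^\flat)=m_X\cdot\alpha_{TX}\cdot S(\tau_X\cdot r^\flat)=m_X\cdot T(\tau_X\cdot r^\flat)\cdot\alpha_Y=r^\tau\cdot\alpha_Y.
\]

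With these in hand, the pointwise inequality is immediate: applying $\V$-functoriality of $\alpha_X$ with $\fa'=r^\sigma(\fy)$, and then substituting the commutation, yields
\[
(r^\sigma(\fy)\Vhoml\fa)\le(\alpha_X(r^\sigma(\fy))\Vhoml\alpha_X(\fa))=(r^\tau(\alpha_Y(\fy))\Vhoml\alpha_X(\fa)),
\]
which is exactly what is needed. I expect the only genuinely delicate point to be the verification that $\alpha_X$ is a morphism of $\mPV$-algebras — i.e.\ that it is an equivariant sup-map, and hence a $\V$-functor — since this is where the hypothesis that $\alpha$ lives in the comma category $(\mPV\comma\Mnds{\Set})$ is really used; the commutation $\alpha_X\cdot r^\sigma=r^\tau\cdot\alpha_Y$ and the final assembly are then routine bookkeeping with the monad-morphism identities.
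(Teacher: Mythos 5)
Your proposal is correct and follows essentially the same route as the paper's proof: reduce to the pointwise inequality $(r^\sigma(\fy)\Vhoml\fa)\le(r^\tau(\alpha_Y(\fy))\Vhoml\alpha_X(\fa))$, establish the commutation $\alpha_X\cdot r^\sigma=r^\tau\cdot\alpha_Y$, and conclude from the fact that $\alpha_X$ is a morphism of $\mPV$-algebras and hence a $\V$-functor. You simply spell out the verifications (the $\mPV$-algebra morphism identity and the naturality computations) that the paper leaves implicit, and these are carried out correctly.
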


\begin{proof}
We first remark that
\[
r^\tau\cdot\alpha_Y=\alpha_X\cdot r^\sigma
\]
The pointwise version of the stated condition reads as $\keS r(\fx,\fy)\le\keT r(\alpha_X(\fx),\alpha_Y(\fy))$, that is,
\[
(r^\sigma(\fy)\Vhoml\fx)\le(\alpha_X\cdot r^\sigma(\fy)\Vhoml\alpha_X(\fx)).
\]
This condition follows from the fact that $\alpha_X:SX\to TX$ is a morphisms of the $\mPV$-algebras $(SX,n_X\cdot\sigma_{SX})$ and $(TX,m_X\cdot\tau_{TX})$, and this in turn follows from $\alpha:\mS\to\mT$ being a morphism in $(\mPV\comma\Mnds{\Set})$.
\end{proof}

\begin{prop}\label{prop:G}
Morphisms $\alpha:(\mS,\sigma)\to(\mT,\tau)$ in $(\mP_\V\comma\Mnds{\Set})$ functorially determine morphisms $\alpha:(\mS,\kemS)\to(\mT,\kemT)$ in $\LaxExts{\V}$ (where the $\V$-power-enriched monads $\mS$ and $\mT$ are equipped with their respective Kleisli extensions). More precisely, there is a functor
\[
G:(\mP_\V\comma\Mnds{\Set})\to\LaxExts{\V}
\]
that commutes with the respective forgetful functors to $\Mnds{\Set}$. 
\end{prop}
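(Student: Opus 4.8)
The plan is to establish the two assertions folded into the statement: that the object assignment $(\mT,\tau)\mapsto(\mT,\kemT)$ lands in $\LaxExts{\V}$, and that a morphism $\alpha$ of $(\mPV\comma\Mnds{\Set})$, kept on its underlying monad morphism, is a morphism of $\LaxExts{\V}$; functoriality of $G$ and its compatibility with the forgetful functors to $\Mnds{\Set}$ are then purely formal. For the object part, Proposition~\ref{prop_Kleisli ext} shows that $\kemT$ is a lax extension of $\mT$ and Theorem~\ref{thm:ConvAndNbhd} shows that it is associative, so $(\mT,\kemT)$ (and likewise $(\mS,\kemS)$) is a genuine object of $\LaxExts{\V}$.

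The substance is the morphism condition, namely that $\theta:=(m\cdot T\alpha)^\circ\cdot\keT 1:\keT\relto\keT\keS$ is natural in $\Rels{\V}$. First I would identify $\theta$ conceptually. Since $\alpha$ is a comma morphism, $\alpha_X:SX\to TX$ is a morphism of the relevant $\mPV$-algebras (as in the proof of Lemma~\ref{lem:MndMorphLaxExtMorph}), so, using Proposition~\ref{prop:PMndEquiv}(iv) to see that $m_X$ and $T\alpha_X$ are equivariant sup-maps, the composite $m_X\cdot T\alpha_X:TSX\to TX$ is a $\V$-functor for the $\V$-category structures $\keT 1_{SX}$ and $\keT 1_X$. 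The defining formula $f^\ast=f^\circ\cdot b$ for the companion module (see~\ref{ssec:V-CatopV-Mod}) then identifies $\theta_X$ with $(m_X\cdot T\alpha_X)^\ast$.

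Next, recalling from~\ref{Kleisli_ext} that $\keT r=(r^\tau)^\ast$ and $\keT\keS r=((\keS r)^\tau)^\ast$, and that $(-)^\ast:\Cats{\V}\to\Mods{\V}^\op$ is a functor whose composites are formed as $\V$-relational composites, the naturality square $\theta_Y\cdot\keT r=\keT\keS r\cdot\theta_X$ collapses under contravariant functoriality of $(-)^\ast$ to the identity
\[
r^\tau\cdot m_Y\cdot T\alpha_Y=m_X\cdot T\alpha_X\cdot(\keS r)^\tau
\]
of $\V$-functors in the ordered hom-set $\Cats{\V}(TSY,TX)$. This map identity is the crux and the step I expect to be the main obstacle. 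I would prove it by unwinding $r^\tau=m_X\cdot T(\tau_X\cdot r^\flat)$, rewriting $(\keS r)^\tau=(\keS 1_X)^\tau\cdot Tr^\sigma$ via the contravariant functoriality of $(-)^\tau$ together with the identity $(g^\circ)^\tau=Tg$, and pushing the multiplications across $T\alpha$ using naturality of $m$ and associativity of $\mT$; the comma compatibility $\alpha\cdot\sigma=\tau$ enters through the intertwining $r^\tau\cdot\alpha_Y=\alpha_X\cdot r^\sigma$ recorded in the proof of Lemma~\ref{lem:MndMorphLaxExtMorph}. The delicate point is the adjoint retract hidden in $(\keS r)^\flat$, and I expect Lemma~\ref{lem:MndMorphLaxExtMorph} itself — whose inequality $\keS r\cdot\alpha_X^\circ\le\alpha_Y^\circ\cdot\keT r$ expresses precisely that $\alpha_X$ respects these $\V$-category structures — to be the tool that closes the gap, one comparison following from it and the reverse from reflexivity and transitivity of the $\V$-categories $TSX$.

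Finally, functoriality is immediate once the morphism part is in hand: $G$ leaves the underlying monad morphism untouched, so it preserves identities and composites (these being monad morphisms that are again $\LaxExts{\V}$-morphisms, by the remark following the definition of $\LaxExts{\V}$), and for the same reason it commutes strictly with the forgetful functors to $\Mnds{\Set}$.
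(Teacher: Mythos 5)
Your proposal is correct, and its core computation is the same as the paper's: both arguments hinge on the map identity $r^\tau\cdot m_Y\cdot T\alpha_Y=m_X\cdot T\alpha_X\cdot(\keS r)^\tau$, established by unwinding $(-)^\tau$, using $\tau=\alpha\cdot\sigma$ and naturality of $m$, and absorbing the retract $n_X\cdot\sigma_{SX}\cdot(n_X\cdot\sigma_{SX})^\dashv=1_{SX}$. Where you genuinely diverge is in how you pass from this map identity to the relational naturality square. The paper argues pointwise: it expands $\keT\keS r\cdot(m_X\cdot T\alpha_X)^\circ\cdot\keT 1_X(\fx,\fY)$ as a supremum over $\fZ\in TSX$, obtains one inequality by choosing the witness $\fZ=(\keS r)^\tau(\fY)$, and then invokes Lemma~\ref{lem:MndMorphLaxExtMorph} together with the lax-extension axioms for the reverse inequality. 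You instead observe that every map in sight ($r^\tau$, $(\keS r)^\tau$, $m_X\cdot T\alpha_X$) is an equivariant sup-map and hence a $\V$-functor, identify $\theta_X$ with the companion module $(m_X\cdot T\alpha_X)^\ast$, and let contravariant functoriality of $(-)^\ast:\Cats{\V}\to\Mods{\V}^\op$ convert the map identity directly into the equality of $\V$-relations. This is cleaner: it makes Lemma~\ref{lem:MndMorphLaxExtMorph} unnecessary for this proposition (the reverse inequality is exactly what functoriality of $(-)^\ast$ packages, via transitivity of $\keT 1$ and $\V$-functoriality of $(\keS r)^\tau$). One small criticism: your hedging at the end about the map identity possibly needing Lemma~\ref{lem:MndMorphLaxExtMorph} and a two-sided comparison is unwarranted --- it is an exact equality of $\Set$-maps provable by the direct chain of equalities you outline, with the only ``delicate'' ingredient being the strict retract identity $a\cdot a^\dashv=1$ from~\ref{ssec:Free}; no inequality argument is needed at that stage.
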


\begin{proof}
We only need to verify that a monad morphism $\alpha:\mS\to\mT$ such that $\tau=\alpha\cdot\sigma$ yields a natural transformation $(m\cdot T\alpha)^\circ\cdot\keT1:\keT\to\keT\keS$ in $\Rels{\V}$. Here, we use the notations $\mT=(T,m,e)$, $\mS=(S,n,d)$, and $\mPV=(\PV,\mu,\eta)$.

First, we remark that for $\fx\in TX$, $\fY\in TSY$ and a $\V$-relation $r:X\relto Y$,
\[
(m_Y\cdot T\alpha_Y)^\circ\cdot\keT1_Y\cdot\keT r(\fx,\fY)=\keT r(\fx,m_Y\cdot T\alpha_Y(\fY))=((r^\tau\cdot m_Y\cdot T\alpha_Y(\fY))\Vhoml\fx)
\]
and
\[
\keT\keS r\cdot (m_X\cdot T\alpha_X)^\circ\cdot\keT1_X(\fx,\fY)=\Vee_{\fZ\in TSX}(m_X\cdot T\alpha_X(\fZ)\Vhoml\fx)\otimes((\keS r)^\tau(\fY)\Vhoml\fZ).
\]
Since
\begin{align*}
m_X\cdot T\alpha_X\cdot(\keS r)^\tau&=m_X\cdot T\alpha_X\cdot m_{SX}\cdot T(\alpha_{SX}\cdot\sigma_{SX}\cdot(\keS r)^\flat)\\
&=m_X\cdot T(m_X\cdot T\alpha_X\cdot\alpha_{SX}\cdot\sigma_{SX}\cdot(n_X\cdot\sigma_{SX})^\dashv\cdot r^\sigma)\\
&=m_X\cdot T(\alpha_{X}\cdot n_X\cdot\sigma_{SX}\cdot(n_X\cdot\sigma_{SX})^\dashv\cdot r^\sigma)\\
&=m_X\cdot T(\alpha_{X}\cdot r^\sigma)\\
&=m_X\cdot T(r^\tau\cdot\alpha_Y)=r^\tau\cdot m_Y\cdot T\alpha_Y,
\end{align*}
we have $(m_Y\cdot T\alpha_Y)^\circ\cdot\keT1_Y\cdot\keT r\le\keT\keS r\cdot (m_X\cdot T\alpha_X)^\circ\cdot\keT1_X$. Moreover, by Lemma~\ref{lem:MndMorphLaxExtMorph}, we also have the other inequality, and the sought naturality follows:
\begin{align*}
\keT\keS r\cdot (m_X\cdot T\alpha_X)^\circ\cdot\keT1_X&=\keT\keS r\cdot \keT(\alpha_X^\circ)\cdot m_X^\circ\cdot\keT1_X\\
&\le\keT(\alpha_Y^\circ)\cdot\keT\keT r\cdot m_X^\circ\cdot\keT1_X=(m_Y\cdot T\alpha_Y)^\circ\cdot\keT1_Y\cdot\keT r.
\end{align*}
\end{proof}

\begin{thm}\label{thm:TheAdjunction}
The functor $G:(\mP_\V\comma\Mnds{\Set})\to\LaxExts{\V}$ (see Proposition~\ref{prop:G}) is a full and faithful embedding, and has $F:\LaxExts{\V}\to(\mP_\V\comma\Mnds{\Set})$ as left adjoint. 
\end{thm}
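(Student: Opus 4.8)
The plan is to realise $F \dashv G$ through the universal property of the Yoneda morphisms, and then to read off that $G$ is a full embedding from the invertibility of the counit. Concretely, I would show that for each $(\mT,\emT) \in \LaxExts{\V}$ the morphism $\Yoneda = \Yoneda_{(\mT,\emT)} \colon (\mT,\emT) \to (\mPTV,\kemPTV) = GF(\mT,\emT)$ of Proposition~\ref{prop:YonedaLaxExt} is a universal arrow from $(\mT,\emT)$ to $G$. By the universal-arrow characterisation of adjoints, this simultaneously yields the adjunction, the naturality of $\Yoneda$ as unit, and the counit; moreover the resulting left adjoint agrees with the functor $F$ of Proposition~\ref{prop:F}, since both are built from the same Kleisli correspondence. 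The counit at $(\mT,\tau)$ is then forced to be the order-isomorphism $\mPTV(\mT,\kemT) \cong \mT$ of Corollary~\ref{cor:ConvAndNbhd}, viewed as a morphism of $(\mP_\V\comma\Mnds{\Set})$.

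The substance of the argument is therefore the universal property. Given a morphism $\beta \colon (\mT,\emT) \to G(\mS,\sigma) = (\mS,\kemS)$ in $\LaxExts{\V}$, I must produce a unique $\bar\beta \colon (\mPTV,\pi) \to (\mS,\sigma)$ in the comma category with $G\bar\beta \circ \Yoneda = \beta$. For the underlying monad morphism I would invoke Lemma~\ref{lem:URelDiag}: $\beta$ induces a functor $B \colon \URels{\TV} \to \URels{\SV}$ over $\Set$ that commutes with the two functors $(-)^\sharp$. Transporting $B^\op$ along the order-isomorphisms $\URels{\TV}^\op \cong \Set_\mPTV$ (Theorem~\ref{thm:KleisliOfPi}) and $\URels{\SV}^\op \cong \Set_\mS$ (Theorem~\ref{thm:ConvAndNbhd}) --- both of which commute with the left adjoints from $\Set$ --- turns $B$ into a Kleisli functor $\Set_\mPTV \to \Set_\mS$ over $\Set$, equivalently a unique monad morphism $\bar\beta \colon \mPTV \to \mS$. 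Uniqueness of $\bar\beta$ with $G\bar\beta \circ \Yoneda = \beta$ is then immediate from the bijectivity of the monad-morphism/Kleisli-functor correspondence.

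The delicate point --- which I expect to be the main obstacle --- is that this $\bar\beta$ actually lies in the comma category, i.e. that $\bar\beta \cdot \pi = \sigma$. Under the isomorphisms above, the $\mPV$-structures $\pi$ and $\sigma$ correspond to the functors $(-)_\sharp \colon \Rels{\V} \to \URels{\TV}$ and $(-)_\sharp \colon \Rels{\V} \to \URels{\SV}$ of Proposition~\ref{prop:NatTransfIntoPi} (for the $\mS$-side one also uses that the isomorphism of Corollary~\ref{cor:ConvAndNbhd} intertwines the canonical $\mPV$-structure on $\mPTV(\mS,\kemS)$ with $\sigma$). Hence $\bar\beta \cdot \pi = \sigma$ is equivalent to the relation-level identity $B(r_\sharp) = r_\sharp$ for every $\V$-relation $r \colon X \relto Y$, that is, $B(e_Y^\circ \cdot \eT r) = d_Y^\circ \cdot \keS r$ (writing $\mS = (S,n,d)$). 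I would verify this by unwinding the explicit formula for $B$ from Lemma~\ref{lem:URelDiag} and using the naturality condition making $\beta$ a morphism of $\LaxExts{\V}$, together with the Yoneda identity $\eT r(\fx,\fy) = \kePTV r(\fx^\sharp,\fy^\sharp)$ of Proposition~\ref{prop:Yoneda}; this is the one genuinely computational step.

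With $F \dashv G$ established, the embedding properties follow formally. Faithfulness of $G$ is clear, as $G$ is the identity on underlying monad morphisms (Proposition~\ref{prop:G}); fullness is the general fact that a right adjoint is fully faithful exactly when its counit is invertible, which holds here by Corollary~\ref{cor:ConvAndNbhd}. Finally $G$ is injective on objects, since $\tau$ is recoverable from its Kleisli extension: the $\V$-category structure $\keT 1_X = ((m_X\cdot\tau_{TX})^\dashv)^\circ\cdot\ev_{TX}$ on $TX$ (see~\ref{ssec:DiscretePresheafOfVEnriched}) determines $m_X\cdot\tau_{TX}$ as a left adjoint and hence $\tau$ via Proposition~\ref{prop:PMndEquiv}. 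This presents $(\mP_\V\comma\Mnds{\Set})$ as a full reflective subcategory of $\LaxExts{\V}$.
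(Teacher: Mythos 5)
Your overall route---exhibiting $\Yoneda_{(\mT,\emT)}$ as a universal arrow from $(\mT,\emT)$ to $G$, rather than writing down the unit $\Yoneda$ and counit $\kappa=m\cdot\tau T$ and checking the triangle identities as the paper does---is legitimate, and the ingredients you invoke (Lemma~\ref{lem:URelDiag}, the isomorphisms of Theorems~\ref{thm:ConvAndNbhd} and~\ref{thm:KleisliOfPi}) are the right ones. The problem is that of the three verifications carrying the actual content, you acknowledge only one ($\bar\beta\cdot\pi=\sigma$) and dismiss the other two as formal when they are not.

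First, you never verify the factorization $G\bar\beta\circ\Yoneda=\beta$ for the $\bar\beta$ you construct. Lemma~\ref{lem:URelDiag} only gives that $B$ commutes with the two functors $(-)^\sharp$ \emph{from} $\Set$; that is exactly what makes $B$ correspond to \emph{some} monad morphism $\bar\beta$, but it says nothing about how $\bar\beta$ composes with $\Yoneda\colon\mT\to\mPTV$, whose components $\fx\mapsto\fx^\sharp$ do not factor through the free functor. Checking $\bar\beta_X(\fx^\sharp)=\beta_X(\fx)$ is a computation of the same calibre as the one you flag for the comma-category condition. Second, the uniqueness claim is not ``immediate from the bijectivity of the monad-morphism/Kleisli-functor correspondence'': that bijection only says $\bar\beta$ is determined by $B$, not that every $\gamma$ in the comma category with $G\gamma\circ\Yoneda=\beta$ coincides with $\bar\beta$. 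What uniqueness actually needs is that every $r\in\PTV X$ is recovered as $\yonmult_X\cdot\PTV\Yoneda_X\cdot\pi_{TX}(r)$, so that $\gamma_X(r)=n_X\cdot S\beta_X\cdot\sigma_{TX}(r)$ is forced by $\gamma\cdot\pi=\sigma$, $\gamma\cdot\Yoneda=\beta$ and the monad-morphism axioms; this identity (equivalently, $r=e_1^\circ\cdot\eT r\cdot m_X^\circ\cdot\eT 1_X$ together with $\yonmult_X\cdot\PTV\Yoneda_X=(-)\cdot m_X^\circ\cdot\eT 1_X$) is precisely the computational heart of the paper's verification of the triangle identity $\kappa F\cdot F\Yoneda=1$, and cannot be bypassed. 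The same applies to your last paragraph: that the counit is ``forced to be'' the isomorphism of Corollary~\ref{cor:ConvAndNbhd} is exactly the other triangle identity $G\kappa\cdot\Yoneda G=1$, which again requires proof. In short, the plan is workable, but the three computations the paper performs (the triangle $\nbhd(r_\sharp)=\Set_\tau(r^\flat)$ and the two triangle identities) reappear in your scheme as the compatibility with $\pi$, the factorization through $\Yoneda$, and the uniqueness; only the first of these is addressed.
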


\begin{proof}
To prove the statement, we show that $F$ is left adjoint to $G$ and that the counit of this adjunction is an isomorphism. Hence, consider a $\mPV$-structures monad $(\mT,\tau)$ (with its Kleisli extension $\emT$ to $\Rels{\V}$). By Proposition~\ref{prop:NatTransfIntoPi}, Theorem~\ref{thm:ConvAndNbhd} and the one-to-one correspondence between monad morphisms and functors between Kleisli categories, all the triangles but the lower-right one in the diagram
\[
\xymatrix{&\Set\ar[d]^{(-)^\circ}\ar@/_1.2pc/[ddl]_{F_\PTV}\ar@/^1.2pc/[ddr]^{F_\mT}&\\
&\rule{5ex}{0pt}\Rels{\V}^\op\rule{5ex}{0pt}\ar[d]^{(-)_\sharp}\ar[dl]|{Q^\inv(-)_\sharp\ }\ar[dr]|{\rule[-1ex]{1ex}{0pt}\Set_\tau(-)^\flat}&\\
\Set_\PTV\ar[r]^-{Q}&\URels{(\mT,\V)}^\op\ar[r]^-{\nbhd}&\Set_\mT}
\]
commute. For the last triangle, a computation similar to the last displayed equation in the proof of Theorem~\ref{thm:ConvAndNbhd} shows that for any $\V$-relation $r:X\relto Y$,
\[
\nbhd(r_\sharp)=m_Y\cdot\tau_{TY}\cdot(e_Y^\circ\cdot\keT r)^\flat=\tau_X\cdot r^\flat=\Set_\tau(r^\flat).
\]
One deduces that the following diagram commutes
\[
\xymatrix@C=2em{&\mPV\ar[dl]_{\pi}\ar[dr]^{\tau}&\\
\PTV\ar[rr]^{\kappa}&&\mT,}
\]
where $\kappa$ is the monad morphism induced by the composition of $\nbhd$ and $Q$. Since these two functors are isomorphisms, $\kappa=\kappa_{(\mT,\tau)}:FG(\mT,\tau)\to(\mT,\tau)$ is itself an isomorphism in $(\mP_\V\comma\Mnds{\Set})$. In fact, one computes $\kappa=m\cdot\tau T$.

We are left to verify that our candidates for the unit and counit of the adjunction, $\Yoneda:1\to GF$ and $\kappa:FG\to 1$ respectively, satisfy the triangular identities. For $\fy\in TX$, we use that $(TX, m_X\cdot\tau_{TX})$ is a $\mPV$-algebra and that $\Vee_{\fy\in TX}\fy\ast(\fx\Vhoml\fy)=\fx$ (see~\ref{ssec:Free}) to write
\[
\kappa_X\cdot\Yoneda_X(\fx)=m_X\cdot\tau_{TX}(\fx^\circ\cdot\keT1_X)=\Vee_{\fy\in TX}\fy\ast(\fx^\circ\cdot\keT1_X(\fy,\star))=\Vee_{\fy\in TX}\fy\ast(\fx\Vhoml\fy)=\fx
\]
for all $\fx\in TX$, that is, $G\kappa\cdot\Yoneda G=1$. For the other identity, we note that for all $r\in\URels{(\mT,\V)}(X,1)$,
\begin{align*}
\kappa_{X}&\cdot\PTV(\Yoneda)_X(r)\\
&=m_X\cdot\tau_{TX}(\yoneda_1^\circ\cdot\kePTV r\cdot(\yonmult_X\cdot\PTV\Yoneda_X)^\circ\cdot\kePTV1_X)\\
&=\Vee_{\phi\in\PTV X}\phi\ast(\yoneda_1^\circ\cdot\kePTV r\cdot(\yonmult_X\cdot\PTV\Yoneda_X)^\circ\cdot\kePTV1_X(\phi,\star))\\
&=\Vee_{\phi\in\PTV X,\Phi\in\PTV T X}\phi\ast(\kePTV1_X(\phi,\yonmult_X\cdot\PTV\Yoneda_X(\Phi))\otimes\kePTV r(\Phi,\yoneda_1(\star)))\\
&=\Vee_{\phi\in\PTV X,\Phi\in\PTV T X}(\phi\ast(\yonmult_X\cdot\PTV\Yoneda_X(\Phi)\Vhoml\phi))\ast(r^\pi(\yoneda_1(\star))\Vhoml\Phi)\\
&=\Vee_{\Phi\in\PTV T X}(\yonmult_X\cdot\PTV\Yoneda_X(\Phi))\ast(\pi_{TX}(r)\Vhoml\Phi)\\
&=\Vee_{\Phi\in\PTV T X}(\Phi\cdot m_X^\circ\cdot\eT1_X)\ast((e_1^\circ\cdot\eT r)\Vhoml\Phi).
\end{align*}
Since $\Phi=\pi_{TX}(r)=e_1^\circ\cdot\eT r$ is a possible value for $\Phi$ running through $\PTV TX$, we obtain
\[
r=e_1^\circ\cdot\eT r\cdot m_X^\circ\cdot\eT1_X\le\kappa_{X}\cdot\PTV(\Yoneda)_X(r).
\] 
Moreover, $\yonmult_X\cdot\PTV\Yoneda_X=(-)\cdot m_X^\circ\cdot\eT 1_X:\PTV TX\to\PTV X$ is a $\mPV$-algebra morphism, and therefore also a $\V$-functor of the underling $\V$-categories, so
\[
((e_1^\circ\cdot\eT r)\Vhoml\Phi)\le((e_1^\circ\cdot\eT r\cdot m_X^\circ\cdot\eT 1_X)\Vhoml(\Phi\cdot m_X^\circ\cdot\eT 1_X))=(r\Vhoml(\Phi\cdot m_X^\circ\cdot\eT 1_X));
\]
hence, $(\Phi\cdot m_X^\circ\cdot\eT1_X)\ast((e_1^\circ\cdot\eT r)\Vhoml\Phi)\le r$ for all $\Phi\in\PTV TX$, and
\[
\kappa_{X}\cdot\PTV(\Yoneda)_X(r)\le r
\]
holds. This shows $\kappa F\cdot F\Yoneda=1$.
\end{proof}

\section{Lax algebras}

\subsection{Categories of monoids}
Let $\emT$ be an associative lax extension of $\mT=(T,m,e)$ to $\Rels{\V}$. A \df{$(\mT,\V)$-category} (generically, a \df{lax algebra}) on a set $X$ is a monoid in $\URels{(\mT,\V)}(X,X)$. In other words, a lax algebra $(X,a)$ is a set $X$ with a $(\mT,\V)$-relation $a:X\krelto X$ satisfying
\[
1_X^\sharp\le a\qquad\text{and}\qquad a\kleisli a\le a,
\]
or equivalently, $e_X^\circ\le a$ and $a\cdot\eT a\cdot m_X^\circ\le a$. These conditions imply in particular that $a$ is unitary and idempotent:
\[
1_X^\sharp\kleisli a=a=a\kleisli 1_X^\sharp\qquad\text{and}\qquad a\kleisli a=a
\]
(see \cite[Section~III.1.8]{Cle/al:14} for details). A \df{$(\mT,\V)$-functor} between $(\mT,\V)$ categories $(X,a)$ and $(Y,b)$ is a map $f:X\to Y$ such that
\[
a\kleisli f^\sharp\le f^\sharp\kleisli b,
\]
or equivalently, $a\le f^\circ\cdot b\cdot Tf$. The category of $(\mT,\V)$-categories with $(\mT,\V)$-functors as morphisms is denoted by
\[
\Cats{(\mT,\V)}.
\]
By Theorem~\ref{thm:KleisliOfPi}, a monoid in $\URels{(\mT,\V)}(X,X)$ is a monoid in $\Set_\mPTV(X,X)$. More generally, given a $\two$-enriched monad $\mT=(T,m,e)$, a \df{$\mT$-monoid} (or a \df{Kleisli monoid}) is a set $X$ with a map $\nu:X\to TX$ such that
\[
e_X\le\nu\qquad\text{and}\qquad\nu\kleisli\nu\le\nu.
\]
A morphism between $\mT$-monoids $(X,\nu)$ and $(Y,\xi)$ is a map $f:X\to Y$ satisfying
\[
f_\natural\kleisli\nu\le\xi\kleisli f_\natural,
\]
where $f_\natural=e_Y\cdot f$; equivalently, this expression can be written $Tf\cdot\nu\le\xi\cdot f$. The category of $\mT$-monoids and their morphisms is denoted by
\[
\Mons{\mT}.
\]
In the general case where $\mT$ is a monad on $\Set$ (not necessarily $\two$-enriched) equipped with an an associative lax extension $\emT$ to $\Rels{\V}$, the discrete presheaf monad $\mPTV$ is always $\V$-power-enriched---and in particular $\two$-enriched. Via the order-isomorphism of Theorem~\ref{thm:KleisliOfPi}, it is easily verified that the morphisms of monoids correspond, so there is an isomorphism
\[
\Cats{(\mT,\V)}\cong\Mons{\mPTV}.
\]

\subsection{The Kleisli extension and monoids}
Recall from Theorem~\ref{thm:ConvAndNbhd} that if $\mT$ is a $\V$-power-enriched monad, there is an order-isomorphism
\[
\Set_\mT\cong\URels{(\mT,\V)}^\op
\]
when $\mT$ comes with its Kleisli extension $\kemT$ to $\Rels{\V}$. By the $\V$-enrichment, there is a monad morphism $\tau:\mPV\to\mT$, and the order on the hom-sets of $\Set_\mT$ is induced by the order on the sets $TX$, that is, by the semilattice structure. In other words, the $\two$-enrichment $\tau\cdot\iota:\mP\to\mT$ yields the same ordered category $\Set_\mT$ as the $\V$-enrichment $\tau:\mPV\to\mT$ (here, $\iota$ is the monad morphism $\iota:\mP\to\mPV$ given by the canonical quantale homomorphism $\two\to\V$ described in Example~\ref{ex:quantales}\ref{ex:quantales:two}).

\begin{prop}
Let $\mT$ a monad on $\Set$, and $\kappa:\mPW\to\mPV$, $\tau:\mPV\to\mT$ monad morphisms such that $(\mT,\tau)$ is $\V$-power-enriched. Then $(\mT,\tau\cdot\kappa)$ is $\W$-enriched, and there is an isomorphism
\[
\Cats{(\mT,\V)}\cong\Cats{(\mT,\W)},
\]
where $(\mT,\tau)$ and $(\mT,\tau\cdot\kappa)$ are equipped with their respective Kleisli extensions $\kemT_\V$ to $\Rels{\V}$, and $\kemT_\W$ to $\Rels{\W}$.
\end{prop}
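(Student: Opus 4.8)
The plan is to reduce the whole statement to the category of Kleisli monoids of $\mT$ and to show that the two enrichments in play produce the \emph{same} such category. Since $\kappa$ and $\tau$ are monad morphisms, so is $\tau\cdot\kappa\colon\mPW\to\mT$, and this is the candidate $\W$-enrichment. I claim everything hinges on the coincidence of the two induced $\two$-enrichments, that is, on the equality
\[
\tau\cdot\iota_\V=\tau\cdot\kappa\cdot\iota_\W:\mP\to\mT,
\]
where $\iota_\V\colon\mP\to\mPV$ and $\iota_\W\colon\mP\to\mPW$ are the canonical morphisms coming from the homomorphisms $\two\to\V$ and $\two\to\W$ of Example~\ref{ex:quantales}\ref{ex:quantales:two}. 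Granting this, all remaining steps are formal.

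First I would deduce that $(\mT,\tau\cdot\kappa)$ is $\W$-power-enriched, which is needed even to define $\kemT_\W$ and hence $\Cats{(\mT,\W)}$. By the preceding subsection the order on the hom-sets of $\Set_\mT$ induced by a $\V$- or $\W$-enrichment is the order carried by the underlying semilattice (complete-lattice) structure of the sets $TX$, i.e. the structure obtained by restricting the $\mPV$- (resp. $\mPW$-)algebra structure along $\iota_\V$ (resp. $\iota_\W$). The displayed equality says precisely that these two restrictions agree, so the lattice orders on each $TX$, and therefore the pointwise orders on every $\Set(X,TY)$, coincide. Since the underlying map of $L_\W f=m\cdot Tf$ does not depend on the enrichment and the order on the objects of $\Sup^\W$ and of $\Sup^\V$ is the common lattice order, the monotonicity condition defining $\W$-power-enrichment is \emph{verbatim} the one already known for $(\mT,\tau)$; hence it holds.

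Next, for the isomorphism I would chain the earlier results. The identification $\Cats{(\mT,\V)}\cong\Mons{\mPTV}$ of the previous subsection, applied with $\emT=\kemT_\V$, together with Corollary~\ref{cor:ConvAndNbhd} giving $\mPTV(\mT,\kemT_\V)\cong\mT$ as ordered monads, yields $\Cats{(\mT,\V)}\cong\Mons{\mT}$, where $\mT$ carries the $\two$-enrichment $\tau\cdot\iota_\V$. Running the same argument over $\W$ (now legitimate, by the previous paragraph) gives $\Cats{(\mT,\W)}\cong\Mons{\mT}$ with $\mT$ carrying $\tau\cdot\kappa\cdot\iota_\W$. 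The definition of $\Mons{\mT}$, namely maps $\nu\colon X\to TX$ with $e_X\le\nu$ and $\nu\kleisli\nu\le\nu$ and the corresponding morphism condition, refers only to the monad structure of $\mT$ and to this $\two$-enrichment; so the coincidence of the two $\two$-enrichments makes the two copies of $\Mons{\mT}$ literally the same category, and $\Cats{(\mT,\V)}\cong\Cats{(\mT,\W)}$ follows.

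The step I expect to be the main obstacle is the displayed equality, equivalently $\iota_\V=\kappa\cdot\iota_\W$, as this is the only place where the nature of $\kappa$ enters. Via naturality of $\kappa$ it reduces to showing that $\kappa_X$ carries the $\W$-characteristic function of a subset (with value $l$) to its $\V$-characteristic function (with value $k$); the clean route is to recognise $\kappa$ as the monad morphism induced by a quantale homomorphism $h\colon\W\to\V$, so that $\kappa_X=h\circ(-)$ acts pointwise, whence $\kappa\cdot\iota_\W$ corresponds to the composite $\two\to\W\xrightarrow{h}\V$, which equals $\iota_\V$ by uniqueness of the canonical homomorphism $\two\to\V$ (Example~\ref{ex:quantales}\ref{ex:quantales:two}). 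The delicate point on which the statement ultimately rests is thus that the relevant $\kappa$ is of this pointwise form — equivalently, that the induced comparison functor $\Rels{\W}\to\Rels{\V}$ preserves the joins needed to reduce a general relation to its points — and this is what I would have to pin down with care.
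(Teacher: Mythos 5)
Your overall route is the paper's: apply Theorem~\ref{thm:ConvAndNbhd} (equivalently, Corollary~\ref{cor:ConvAndNbhd} together with $\Cats{(\mT,\V)}\cong\Mons{\mPTV}$) once over $\V$ and once over $\W$, so as to identify both categories of lax algebras with $\Mons{\mT}$; the paper's own proof is exactly the chain $\Cats{(\mT,\V)}\cong\Mons{\mT}\cong\Cats{(\mT,\W)}$ and does not even pause over the $\W$-power-enrichment of $(\mT,\tau\cdot\kappa)$, so your extra care there is a genuine improvement rather than a detour. Your reduction of everything to the single equality $\iota_\V=\kappa\cdot\iota_\W$ (with $\iota_\V:\mP\to\mPV$ and $\iota_\W:\mP\to\mPW$ the canonical morphisms) is also the right move: it gives at once that the two complete-lattice structures on each $TX$ coincide, hence that the induced orders on the hom-sets $\Set(X,TY)$ coincide, and hence both the $\W$-power-enrichment of $(\mT,\tau\cdot\kappa)$ and the identification of the two copies of $\Mons{\mT}$.

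The weak point is your proposed proof of that equality, and you are right to flag it. You should not try to realise $\kappa$ as $h\circ(-)$ for a quantale homomorphism $h:\W\to\V$: nothing in the hypotheses provides such an $h$, and it is more than you need. The equality holds for the cheaper reason that there is at most one natural transformation $P\to P_\V$ compatible with the units. Indeed, suppose $\lambda:P\to P_\V$ is natural and $\lambda_X(\{x\})=\eta_X(x)$ for all $x\in X$. For $\emptyset\neq S\subseteq X$, naturality along the inclusion $S\into X$ reduces the computation of $\lambda_X(S)$ to that of $\lambda_S(S)$; naturality along the transpositions of $S$ forces $\lambda_S(S)$ to be a constant function, say with value $v\in\V$; and naturality along $S\to 1$ together with $\lambda_1(\{\star\})=\eta_1(\star)$ forces $k=\Vee_{s\in S}v=v$. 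Finally, $\lambda_X(\emptyset)$ is the constant function $\bot$ by naturality along $\emptyset\into X$. Hence $\lambda$ is the canonical morphism, and in particular $\kappa\cdot\iota_\W=\iota_\V$, both sides being unit-compatible and natural. With this inserted in place of your appeal to a pointwise $\kappa$, your argument closes and agrees with the paper's.
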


\begin{proof}
By Theorem~\ref{thm:ConvAndNbhd}, there are order-isomorphisms
\[
\URels{(\mT,\W)}^\op\cong\Set_\mT\cong\URels{(\mT,\V)}^\op.
\]
Hence, monoids in the corresponding hom-sets are in bijective correspondence, and so are morphisms between these, that is, 
\[
\Cats{(\mT,\V)}\cong\Mons{\mT}\cong\Cats{(\mT,\W)},
\]
which proves the claim.
\end{proof}

%%%%%%%%%%%%%%%%%%%%%%%%%%%%%%%%%%%%%%%%%%%%%%%%%%%%%%%%%%
%%%%%%%%%%%%%%%%%%%%%%%%%%%%%%%%%%%%%%%%%%%%%%%%%%%%%%%%%%

\bibliographystyle{plain}

\end{document}